\definecolor{green}{rgb}{0,0.8,0} 
\newtheorem{theorem}{Theorem}[section]
\newtheorem{corollary}[theorem]{Corollary}
\newtheorem{lemma}[theorem]{Lemma}
\newtheorem{proposition}[theorem]{Proposition}
\theoremstyle{definition}
\newtheorem{definition}[theorem]{Definition}
\theoremstyle{remark}
\newtheorem{remark}[theorem]{Remark}
\numberwithin{equation}{section}
\newcommand{\nnrm}[1]{{\vert\kern-0.25ex\vert\kern-0.25ex\vert #1 
		\vert\kern-0.25ex\vert\kern-0.25ex\vert}}
\newcommand{\supp}{{\mathrm{supp}}\,}
\newcommand{\bbN}{\mathbb N}
\newcommand{\bbR}{\mathbb R}
\newcommand{\bbV}{\mathbb V}
\newcommand{\calB}{\mathcal B}
\newcommand{\calC}{\mathcal C}
\newcommand{\calD}{\mathcal D}
\newcommand{\calE}{\mathcal E}
\newcommand{\calF}{\mathcal F}
\newcommand{\calL}{\mathscr L}
\newcommand{\calP}{\mathcal P}
\newcommand{\frkR}{\mathfrak R}
\newcommand{\frkX}{\mathfrak X}
\newcommand{\sfF}{\mathsf{F}}
\newcommand{\sfc}{\mathsf{c}}
\DeclareMathOperator*{\argmin}{argmin}
\DeclareMathOperator*{\esssup}{esssup}
\newcommand{\R}{\mathbb{R}}
\newcommand{\N}{\mathbb N}
\newcommand{\mc}{\mathcal C}
\newcommand{\bq}{\begin{equation}}
	\newcommand{\eq}{\end{equation}}
\newcommand{\e}{\varepsilon}
\newcommand{\lt}{\left}
\newcommand{\rt}{\right}
\newcommand{\pa}{\partial}
\newcommand{\intr}{\int_{\bbR^d}}
\newcommand{\intrr}{\iint_{\bbR^d \times \bbR^d}}
	\author{Young-Pil Choi}
	\address{Young-Pil Choi, Department of Mathematics, Yonsei University, Seoul 03722, Republic of Korea}
	\email{ypchoi@yonsei.ac.kr}
	\author{Jeongho Kim}
	\address{Jeongho Kim, Department of Applied Mathematics, Kyung Hee University, 1732 Deogyeong-Daero, Giheung-gu, Yongin-si, Gyeonggi-Do 17104, Republic of Korea}
	\email{jeonghokim@khu.ac.kr}
	\author{Oliver Tse}
	\address{Oliver Tse, Department of Mathematics and Computer Science, Eindhoven University of Technology, 5600 MB Eindhoven, the Netherlands}
	\email{o.t.c.tse@tue.nl}
\begin{document}
	\bibliographystyle{plain}
	\title{Derivation of nonlinear aggregation-diffusion equation from a kinetic BGK-type equation}
	
	\date\today
	
	\maketitle

	\renewcommand{\thefootnote}{\fnsymbol{footnote}}
	\footnotetext{\emph{Keywords: Nonlinear aggregation-diffusion equation, BGK-type kinetic model, diffusion limit, weak entropy solution, relative entropy. }  \\
	}
	\renewcommand{\thefootnote}{\arabic{footnote}}

	\begin{abstract} 
	This paper investigates the diffusion limit of a kinetic BGK-type equation, focusing on its relaxation to a nonlinear aggregation-diffusion equation, where the diffusion exhibits a porous-medium-type nonlinearity. Unlike previous studies by Dolbeault et al. [Arch. Ration. Mech. Anal., 186, (2007), 133--158] and Addala and Tayeb [J. Hyperbolic Differ. Equ., 16, (2019), 131--156], which required bounded initial data, our work considers initial data that need not be bounded. We develop new techniques for handling weak entropy solutions that satisfy the natural bounds associated with the kinetic entropy inequality. Our proof employs the relative entropy method and various compactness arguments to establish the convergence and properties of these solutions.
	\end{abstract}
	
	%
	%
	%
	%
	%
	%

	\section{Introduction}
The primary objective of this work is to justify nonlinear aggregation-diffusion equations as limits of a suitable simple kinetic model. Specifically, we rigorously derive the  equation
	\bq\label{main_eq}
	\pa_t \rho = \Delta_x \rho^\gamma + \nabla_x \cdot (\rho \sfF[\rho]), \qquad \gamma \in \lt(1, 1 + \frac2{d+2} \rt],
	\eq
	where the unknown $\rho = \rho(t,x)$ represents the density at $(t,x) \in \R_+ \times \bbR^d$ with $d \geq 1$ and $\sfF[\rho]$ is a (nonlocal) force of the form
	\[
		\sfF[\rho] = \nabla V + \nabla K\ast\rho.
	\]
	Here, $V\colon\bbR^d \to \R_+$ is a given external potential, and $K\colon \bbR^d \to \R$ is an interaction potential. Equations like the ones above appear in various contexts, including phase separation of lattice active matter, granular flow, biological swarming, and pattern formation \cite{CCCSS16, CMV03, GL1, GL2, ME99}. For a thorough discussion on nonlinear drift-diffusion models, we refer to \cite{CCY19, Va17} and the references therein.
	
To derive equation \eqref{main_eq} from a kinetic formulation, we follow an approach inspired by \cite{Bou99} and consider the following scaled BGK-type equation
	\bq\label{main_kin}
	\e \pa_t f^\e +  v \cdot \nabla_x f^\e - \sfF[\rho_{f^\e}] \cdot \nabla_v f^\e = \frac{1}{\e} Q_\gamma[f^\e],
	\eq
	where $f^\e= f^\e(t,x,v)$ is the one-particle distribution function at time $t \in \R_+$ and the phase-space point $(x,v) \in \bbR^d \times \bbR^d$. The local particle density $\rho_{f^\e}$ is given by
	\[
	\rho_{f^\e}(t,x) \coloneqq \intr f^\e(t,x,v)\,dv,
	\]
	and the BGK-type relaxation operator $Q_\gamma[f]=Q_\gamma[f](x,v)$ takes the form:
	\[
	Q_\gamma[f] \coloneqq  M_\gamma[\rho_f] - f,
	\]
	with the local equilibrium distribution function $M_\gamma$ defined as
		\bq\label{pre}
	M_\gamma[\rho](t,x,v) \coloneqq 
		\displaystyle c_{\gamma,d}\left(\frac{2\gamma}{\gamma-1}\rho^{\gamma-1}(t,x)-|v|^2\right)^{n/2}_+,   \qquad \gamma \in \lt(1, 1 + \frac2{d+2} \rt].
	\eq
	Here, the constants are defined as:
	\bq\label{para}
	n =\frac{2}{\gamma-1}-d\qquad \mbox{and} \qquad c_{\gamma,d} =\left(\frac{2\gamma}{\gamma-1}\right)^{-1/(\gamma-1)}\frac{\Gamma\left(\frac{\gamma}{\gamma-1}\right)}{\pi^{d/2}\Gamma(1+n/2)}. 
	\eq
    Since we consider the case when $1<\gamma\le 1+\frac{2}{d+2}$, we have $n\ge 2$ and $\gamma$ takes the form $\gamma = 1 + \frac{2}{n+d}$.

The equation \eqref{main_kin} can be rescaled by replacing the time variable with $\e t$ and by considering the relaxation time parameter $\tau = \e$, which is inversely related to the relaxation operator. Throughout this paper, we assume that $f^\e$ is a probability density, i.e., $\|f^\e(t)\|_{L^1} = 1$ for all $\e > 0$ and $t \geq 0$, since the total mass is preserved over time. 

	%
	%
	%
	%
	%
	%
\medskip

In the current work, we focus on the asymptotic analysis of the equation \eqref{main_kin} in the diffusion regime corresponding to the limit $\e \to 0$, which leads to the derivation of the equation \eqref{main_eq}. Here, we briefly outline the formal reasoning that explains why the equation \eqref{main_eq} is expected to be the limiting form of \eqref{main_kin} as $\e \to 0$. We begin by noting that direct computations, as detailed in \cite{CHpre}, yield the identities
	\begin{gather*}
	\intr M_\gamma[\rho](v)\,dv = \rho, \qquad \intr vM_\gamma[\rho](v)\,dv = 0,\\
	\intr v \otimes v\, M_\gamma[\rho](v)\,dv = \rho^\gamma \mathbb{I}_d, \qquad \gamma \in \lt(1, 1 + \frac2{d+2} \rt].
	\end{gather*}
	We then express the local moments (in $v$) of $f^\e$ to find
	\bq\begin{aligned}\label{eq:moments}
		&\e\pa_t \rho_{f^\e} + \nabla \cdot (m_{f^\e}) = 0,\cr
		&\e\pa_t m_{f^\e} +\nabla \cdot \intr v \otimes v\, f^\e\,dv + \rho_{f^\e} \sfF[\rho_{f^\e}]  = -\frac{1}{\e} m_{f^\e},
	\end{aligned}
	\eq
	where $m_f$ is the local momentum given by
	\[
	m_f \coloneqq \intr vf\,dv.
	\]
	Since $\e \ll 1$, it follows from the equation of $m_{f^\e}$ that
	\[
		-\frac1\e m_{f^\e} \approx \nabla \cdot \intr v \otimes v\, f^\e\,dv + \rho_{f^\e} \sfF[\rho_{f^\e}].
	\]
	Moreover, since $f^\e \approx M_\gamma[\rho_{f^\e}]$ for $\e \ll 1$, we have
	\[
	\nabla \cdot \intr v \otimes v\, f^\e\,dv \approx \nabla \cdot \intr v \otimes v\, M_\gamma[\rho_{f^\e}]\,dv = \nabla \rho_{f^\e}^\gamma,
	\]
	which ultimately then leads to 
	\[
		\pa_t \rho_{f^\e} = -\frac{1}{\e} \nabla \cdot (m_{f^\e})  \approx \Delta \rho_{f^\e}^\gamma + \nabla \cdot \bigl(\rho_{f^\e} \sfF[\rho_{f^\e}]\bigr) \qquad \mbox{for } \e \ll 1.
	\]

	%
	%
	%
	%
	%
	%
Regarding the diffusion limit of kinetic equations to the drift-diffusion equation, the linear diffusion case $(\gamma = 1)$ has been extensively studied. In this context, the linear Fokker--Planck operator $L_{\rm FP}(f) = \nabla_v \cdot (\nabla_v f + vf)$ is used on the right-hand side of the equation \eqref{main_kin}, instead of the BGK-type relaxation operator. A variational technique was proposed in \cite{DLPS17} to rigorously derive the limit from the Vlasov--Fokker--Planck equation to \eqref{main_eq} with $\gamma=1$ under sufficiently smooth regularity and integrability assumptions on the interaction potential $K$, for instance, globally bounded first and second derivatives. This approach relies on duality methods and compactness arguments via a coarse-graining map, which yields the diffusion limit without providing convergence rates. 
	
In \cite{EM10,Gou05,PoS00}, the qualitative analysis of diffusion limit from the Vlasov--Poisson--Fokker--Planck system to the drift-diffusion equation is investigated, with $K$ representing either attractive or repulsive Coulomb potentials. In \cite{PoS00}, strong convergence of $\rho_{f^\e}$ is established on short time intervals for $d=2,3$, and weak convergence is shown on global time intervals for $d=2$. This work was later extended in \cite{Gou05} for Newtonian interactions when $d=2$. These works were further improved in \cite{EM10}, where global convergence was achieved without restriction on the time interval or dimension. Later, it was extended to the kinetic model for multi-species charged particles in \cite{WLL15}. 

The approaches in these works primarily rely on uniform-in-$\e$ entropy estimates combined with compactness arguments. Our present work has some similarities to \cite{EM10} in that we do not impose additional strong assumptions on the initial data; instead, we only consider a weak solution that satisfies the kinetic entropy inequality. A quantitative convergence result was established in \cite{CT22} by utilizing the second-order Wasserstein distance, where the gradient-flow structure of the limit equation plays a crucial role. Notably, this work covers both attractive and repulsive Coulombic interactions with zero background states. In the case of nonzero background states, quantitative convergence results were obtained in \cite{Blapre, HR18} using a non-perturbative framework and in \cite{Zho22} using a perturbative approach both within the $L^2$ framework.

For the nonlinear diffusion case $(\gamma > 1)$, we focus on two recent works \cite{AT19, DMOS07}, where the BGK-type operator is considered. These studies are closely related to our work and are highlighted for clarity, given the extensive research conducted from various perspectives on this topic. In \cite{DMOS07}, the same local equilibrium $M_\gamma[\rho]$ is considered as a special case, and the equation \eqref{main_eq} without the nonlocal interaction potential $K$ is quantitatively derived for a uniformly bounded initial data. In fact, the general form of the local equilibrium is designed in \cite{DMOS07}, which is of the form:
\[
M_\gamma[\rho] = \zeta\lt(\frac{|v|^2}2 - \bar\mu(\rho) \rt),
\]
where $\zeta: \R \to \R_+$ and $\bar\mu$ is implicitly determined by 
\[
\rho = \intr \zeta\lt(\frac{|v|^2}2 - \bar\mu(\rho) \rt)dv.
\]
Under various assumptions on $\zeta$ and $V$, the following form of the drift-diffusion equation is rigorously derived:
\[
\pa_t \rho - \nabla_x \cdot (\rho \nabla_x V) = \Delta_x \nu(\rho),
\]
with 
\[
\nu(\rho)\coloneqq  \int_0^\rho \tilde \rho \bar\mu'(\tilde\rho)\,d\tilde\rho.
\]
A crucial idea in the proofs is the use of the maximum principle. Specifically, in the absence of a nonlocal interaction forcing term $-\nabla K * \rho$, the left-hand side of the kinetic equation \eqref{main_kin} becomes a linear transport equation. By carefully handling the BGK-operator, \cite{DMOS07} shows that $f^\e$ and $\rho_{f^\e}$ can be bounded uniformly in $\e$, i.e., $f^\e(t,\cdot,\cdot) \leq f_*$ and $\rho_{f^\e}(t, \cdot) \leq \rho_*$ for some uniform constants $f_*$ and $\rho_*$ for all $t > 0$ under an assumption on $f^\e_0$ related to $\zeta$. These bounds not only facilitate obtaining quantitative estimates on the local moment $m_{f^\e} = \intr vf^\e\,dv$ and other error terms but are also essential in applying compactness arguments effectively. However, this approach cannot be directly applied in the presence of a nonlocal interaction potential $K$ since the left-hand side of \eqref{main_kin} is no longer a linear transport equation. 

In a more recent work \cite{AT19}, the diffusion limit of the BGK--Poisson equation with $d\leq 3$ is studied, using a similar local equilibrium function as in \cite{DMOS07}, although under different assumptions on the function $\zeta$. As previously mentioned, in this case, the maximum principle cannot be directly applied. Instead, a {\it weak} maximum principle is established:
\[
	f^\e \leq f_*^\e = \zeta \lt(\frac{|v|^2}2 + K^\e * \rho_{f^\e} - \int_0^t \| K^\e * \pa_t\rho_{f^\e}\|_{L^\infty}\,ds \rt).
\]
To control the right-hand side of the above inequality uniformly in $\e>0$, $\zeta$ is assumed to be bounded. By utilizing this uniform bound on $f^\e$ in $L^\infty_{loc}(\R_+; L^1 \cap L^\infty(\bbR^d \times \bbR^d))$, along with useful $L^2$ estimates for weak solutions of the kinetic equation and the velocity averaging lemma, the convergence of weak solutions to the BGK--Poisson equation to a weak solution of the nonlinear drift-diffusion-Poisson equation is established. Notably, in the one-dimensional case, a quantitative bound on the convergence of $f^\e$ is obtained in $L^\infty([0,T]; L^1(\bbR^d \times \bbR^d))$. Here, we would like to point out that the local equilibrium function $M_\gamma[\rho]$ \eqref{pre} does not satisfy the assumptions stated in \cite{AT19}.

The primary motivation for this work is to extend the results of \cite{AT19, DMOS07} to the cases where the initial data satisfy only the natural bounds associated with the kinetic entropy inequality. More precisely, we focus on studying the convergence of {\it weak entropy solutions} to the equation \eqref{main_kin} toward the nonlinear aggregation-diffusion equation \eqref{main_eq}. For a precise definition of weak entropy solutions, see Definition \ref{def_weak} below. It is important to note that, in general, weak entropy solutions are not necessarily bounded. As a result, we cannot rely on the maximum principle and must develop new techniques to analyze the diffusion limit of \eqref{main_kin}. In this context, we only have a uniform $L^\infty_{loc}(\R_+;L^1 \cap L^{1+2/n}(\bbR^d \times \bbR^d))$ bound on $f^\e$ and an $L^\infty_{loc}(\R_+;L^\gamma(\bbR^d))$ bound on $\rho_{f^\e}$, which are derived from the natural kinetic entropy inequality. These limited bounds present significant challenges in achieving the necessary strong compactness for the solutions.  

\subsubsection{Other derivations of \eqref{main_eq}}
The nonlinear aggregation-diffusion equation \eqref{main_eq} can be rigorously derived from compressible Euler-type equations through various approaches involving large friction limits. The seminal work by \cite{C07} established the strong relaxation limit of multidimensional isothermal Euler equations. Subsequent studies have further explored this connection. For instance, \cite{CCT19} analyzed the convergence to equilibrium in Wasserstein distance for damped Euler equations with interaction forces, providing insights into the behavior of solutions under large friction. In a related context, \cite{GLT17} developed the theory of relative energy for the Korteweg model and other Hamiltonian flows in gas dynamics, contributing to a deeper understanding of the asymptotic behavior of such systems. The work by \cite{LT17} focused on the transition from gas dynamics with large friction to gradient flows, describing diffusion theories and further elucidating the connection between Euler-type systems and aggregation-diffusion equations. More recent contributions, such as \cite{ACC24, CJ21}, have investigated relaxation to fractional porous medium equations from the Euler--Riesz system, expanding the theory to include fractional diffusion effects.

Nonlinear diffusion equations of porous medium type have also been rigorously derived as macroscopic limits of interacting particle systems in various settings. The microscopic dynamics underlying these derivations typically fall into three broad classes: (1) on-lattice dynamics, such as zero-range processes, where the particle motion is constrained to a discrete lattice structure \cite{FIS97, GLT09, Inoue91, SU93}; (2) off-lattice dynamics, involving weakly interacting diffusions \cite{Dembo16, Sh12}; and
(3) singular limits of aggregation-type models, where particle interactions exhibit strong attraction or repulsion and the continuum limit leads to degenerate diffusion \cite{BE23, FP08, Oel90}. Depending on the context, the derivation of the porous medium equation from these systems can be carried out via a range of analytical frameworks, including hydrodynamic limits, mean-field limits, and large deviation principles.

	%
	%
	%
	%
	%
	%

	%
	%
	%
	%
	%
	%
	\subsection{Main results}
	In this subsection, we precisely state our main results concerning the diffusion limit of the BGK-type equation \eqref{main_kin} toward the nonlinear aggregation-diffusion equation \eqref{main_eq}. For this, we first present the result on the global-in-time weak solutions to \eqref{main_kin} and subsequently analyze their behavior in the diffusion limit. We highlight that our results not only establish the convergence of solutions but also directly demonstrate the existence of weak solutions to the limit equation \eqref{main_eq}. 
	
Before proceeding, we outline the main assumptions on the potential functions $V$ and $K$.

\medskip
\paragraph{\bf (HV)} There exists a positive constant $C_V>0$ such that
\[
V \in W^{1,\infty}_{loc}(\bbR^d), \quad V(x) \geq -C_V, \quad \mbox{and} \quad |\nabla V(x) \cdot x| \leq C_V ( 1 + |V(x)|) \mbox{ for all $x \in \bbR^d$}.
\]

\medskip
\paragraph{\bf (HK)} There exists a positive constant $C_K>0$ such that
\[
	K(x) \geq -C_K, \quad |\nabla K(x) \cdot x| \leq  C_K(1 + |K(x)|) \quad \mbox{for } x \in \bbR^d
\]
and
\[
K \in L^p(B_{2R}), \quad \nabla K \in L^q(B_{2R}), \quad \mbox{and} \quad \nabla K \in L^r(\bbR^d \backslash B_R) 
\]
for some constant $R>0$ and extended real numbers $p,q,r \in [1,\infty]$ satisfying 
\bq\label{condi_hk2}
0 \leq \frac1p < 2 - \frac2\gamma, \quad 0 \leq \frac1r < \min \lt\{1 - \frac1{1 + \frac 2n}, \ 2 - \frac 2\gamma \rt\}, \quad \mbox{and} \quad \frac1\gamma - 1 \leq \frac1r - \frac1q \leq 0.
\eq
Here we denoted $B_R := \{x \in \R^d : |x| < R\}$ for $R>0$.

\begin{remark} Note that
\[
 \min \lt\{1 - \frac1{1 + \frac 2n}, \ 2 - \frac 2\gamma \rt\} = \left\{ \begin{array}{ll}
1 - \frac1{1 + \frac 2n} & \textrm{if $\gamma \geq \frac d{d-1}$}\\[2mm]
2 - \frac 2\gamma & \textrm{otherwise}
  \end{array} \right..
\]
\end{remark}
		
\begin{remark}
A typical example of the potential $V$ is the quadratic confinement potential, i.e., $V(x) = |x|^2$, which clearly satisfies the assumptions stated above. The assumptions on $K$ allow us to consider a variety of interaction potentials. Below, we list some key examples. 

\begin{enumerate}[label=(\roman*)]
    \item Lipschitz continuous potentials:
    \begin{itemize}
        \item[--]  Morse potential: $K(x) = A e^{-|x|}$, where $A$ is a constant. This potential decays exponentially with distance.
        \item[--] Gaussian potential: $K(x) = A e^{-|x|^2}$, where $A$ is a constant. This potential has a bell-shaped curve and decays more quickly than the Morse potential.
    \end{itemize}

    \item Potentials with non-Lipschitz continuity near the origin:
    These potentials may not be Lipschitz continuous at the origin but still satisfy the required conditions:
    \[
    K(x) = C_a|x|^a e^{-\bar{C}_a |x|} + C_b|x|^b e^{-\bar{C}_b |x|},
    \]
    where $C_a, C_b \in \R$, $\bar{C}_a, \bar{C}_b > 0$ and $a, b \in (1 - d(\gamma-1)/\gamma, 1]$.
    This form represents a combination of terms that involve powers of $|x|$ and exponential decay. The parameters $a$ and $b$ control the behavior of the potential near the origin.

    \item Unbounded potentials:
    Some interaction potentials grow without bound as $|x|$ increases:
    \[
    K(x) = c_a |x|^a + c_b,
    \]
    where $c_a > 0$, $c_b$ is a constant, and $a \in (1 - d(\gamma-1)/\gamma, 1]$.
    This potential grows as a power of $|x|$ and can be used to model interactions that become stronger with increasing distance.
    \item 3D Coulomb potential:
Consider the Coulomb potential 
    \[
    	K(x) = |x|^{-1}
    \]
    in three dimensions with $\gamma = 7/5$. This potential describes long-range repulsive forces, such as electrostatic interactions in three dimensions. In this case, the condition \eqref{condi_hk2} becomes
    \[
0 \leq \frac1p < \frac47, \quad    0 \leq \frac 1r < \frac12, \quad \mbox{and} \quad \frac1q \leq \frac1r + \frac27 \leq \frac1q + \frac27.
    \]
In particular, choosing $p,q,r$ such that
    \[
    \frac13 < \frac1p < \frac47, \quad \frac23 < \frac 1q < \frac57, \quad \mbox{and} \quad \frac37 < \frac1r < \frac12,
    \]
ensures that the required integrability conditions hold. This region is depicted as a dark region in the right figure of Figure \ref{fig:exponents}.
\end{enumerate}
\end{remark}

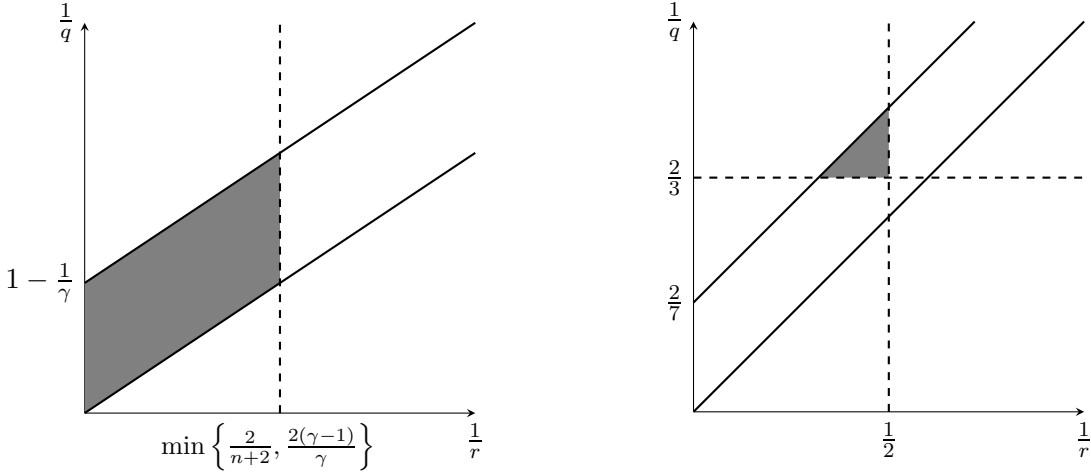
\begin{figure}[h]
\centering
\begin{minipage}{0.48\textwidth}
\centering
\vspace{0.2cm}
\begin{tikzpicture}
    \begin{axis}[
        scale=0.7,
        width=9cm, height=9cm,
     xmin=0, ymin=0,
        axis x line=middle,
        axis y line=middle,,
        xlabel={$ \frac{1}{r}$},
        ylabel={$\frac{1}{q}$},
        xtick={0},
        ytick={0},
        xticklabels={0},
        yticklabels={0},
        ticklabel style={font=\small},
        xlabel style={at={(axis description cs:1,0)},anchor=north},
        ylabel style={at={(axis description cs:0,1)},anchor=east},
        samples=200,
        domain=0:1,
        clip=false
    ]
        \addplot[name path=line1, thick] {x}; 

        \addplot[name path=line2, thick, domain=0:1] {x + 0.5}; 

        \draw[dashed, thick] (axis cs:0.5,0) -- (axis cs:0.5,1.5);

        \addplot[gray] fill between[of=line1 and line2, soft clip={domain=0:0.5}];

        \node[below] at (axis cs:0.47,0) {\small{$ \min\left\{\frac2{n+2}, \frac{2(\gamma - 1)}{\gamma}\right\}$}};
        \node[left] at (axis cs:0,0.5) {$ 1 - \frac{1}{\gamma}$}; 
    \end{axis}
\end{tikzpicture}
\end{minipage}
\quad
 \begin{minipage}{0.48\textwidth}
 \centering
\begin{tikzpicture}
    \begin{axis}[
    scale=0.7,
        width=9cm,
        height=9cm,
        xmin=0, xmax=1,
        ymin=0, ymax=1,
        axis x line=middle,
        axis y line=middle,
        xlabel={$\frac{1}{r}$},
        ylabel={$\frac{1}{q}$},
        xtick={0},
        ytick={0},
        xticklabels={$0$, $\frac{1}{2}$, $1$},
        yticklabels={$0$, $\frac{2}{7}$, $\frac{2}{3}$, $\frac{1}{q}$},
        ticklabel style={font=\small},
        xlabel style={at={(axis description cs:1,0)},anchor=north},
        ylabel style={at={(axis description cs:0,1)},anchor=east},
        domain=0:1,
        clip=false
    ]
    
        \path[name path=line0] (axis cs:0.32,0.6) -- (axis cs:0.5,0.6);
        
        \addplot[name path=line1, thick] {x};
        
        \addplot[name path=line2, thick, domain=0:0.72] {x + 0.28};
        
        \draw[dashed, thick] (axis cs:0.5,0) -- (axis cs:0.5,1);
        
        \draw[dashed, thick] (axis cs:0,0.6) -- (axis cs:1,0.6);
        
       \addplot[gray] fill between[of=line2 and line0, soft clip={domain=0.32:0.5}];

        
        \node[below] at (axis cs:0.5,0) {$\frac{1}{2}$};
        \node[left] at (axis cs:0,0.6) {$\frac{2}{3}$};
        \node[left] at (axis cs:0,0.28) {$\frac{2}{7}$};
    \end{axis}
\end{tikzpicture}
\end{minipage}

	\caption{Conditions for the exponents $q$ and $r$. The left figure indicates the region of $(\frac{1}{r},\frac{1}{q})$ that satisfying the condition \eqref{condi_hk2}, while the right figure shows the region of exponents for the 3D Coulomb potential case with $\gamma=7/5$.}
	\label{fig:exponents}
\end{figure}

Without loss of generality, we will assume in the following that the potentials $V$ and $K$ satisfy $V, K \geq 0$; if necessary, we may consider $(V, K) \mapsto (V + C_V, K + C_K)$, and thus for $U \in \{ V, K\}$,
\[
	U(x) \geq 0 \quad \mbox{and} \quad |\nabla U(x) \cdot x| \leq C_U ( 1 + U(x)) \quad \mbox{for all $x \in \bbR^d$}.
\]

	%
	%
	%
	%
	%
	%

	\subsubsection{\bf\em Global existence of weak entropy solutions}

	\begin{definition}\label{def_weak} For a given $T>0$, we say that $f$ is a \emph{weak entropy solution} to \eqref{main_kin} subject to the initial datum $f_0$ if the following conditions are satisfied: 
		\begin{itemize}
			\item[(i)] $f \in L^\infty([0,T]; L^1_2 \cap L^{1+2/n}(\bbR^d \times \bbR^d))$, where
			\[
				L^1_2(\bbR^m) = \left\{ f\in L^1(\bbR^m) : \int_{\bbR^m} |z|^2f(z)\,dz<+\infty\right\},\quad m\in\bbN,
			\]
			denotes the space of integrable functions with finite second moment.
			\item[(ii)] For all $\varphi \in \mc^1_c([0,T]\times \bbR^d \times \bbR^d)$ with $\varphi(T,z) = 0$, $z=(x,v),$
			\begin{align*}
				- \intrr \varphi(0,\cdot)\,f_0\,dz - \int_0^T \!\!\! \intrr \bigl(\e \pa_t \varphi + v \cdot \nabla_x \varphi &- \sfF[\rho_f] \cdot \nabla_v \varphi \bigr) f dz\,dt \cr
				& = \frac{1}{\e}\int_0^T\!\!\! \intrr \varphi\bigl(M_\gamma[\rho_f] - f\bigr) dz\,dt,
			\end{align*}
			where $\sfF[\rho] = \nabla V + \nabla K\ast\rho$ is the (nonlocal) force.
			\item[(iii)] The \emph{energy inequality} holds 		for almost every $t \in [0,T]$:
		\begin{align}\label{kin_ineq}
			\calE[f(t,\cdot)] + \frac{1}{\e^2}\int_0^t\intrr \bigl(H[f](s,z) - H[M_\gamma[\rho_f]](s,z) \bigr)\,dz \,ds  \le \calE[f_0], 
		\end{align}
		where
		\[
			\calE[f] \coloneqq \intrr H[f](z)\,dz + \intr V\rho_f\,dx + \frac12 \intr \rho_f K\ast\rho_f\,dx.
		\]
		Here, the \emph{kinetic entropy} is defined for $n\ge 2$ as 
				\[
		H[f](z) = 
			\displaystyle \frac{|v|^2}2f(z) + \frac1{2c_{\gamma,d}^{2/n}} \frac{f^{1 + 2/n}(z)}{1 + 2/n}, \qquad z=(x,v)\in\bbR^d\times\bbR^d.
		\]
		\end{itemize}
	\end{definition}

	\begin{theorem}\label{thm_kext} Let $T>0$. Further, let $V$ and $K$ satisfy {\bf (HV)} and {\bf (HK)}, respectively. Then, for any initial datum $f_0\in L^1_2 \cap L^{1+2/n}(\bbR^d \times \bbR^d)$ with finite energy $\calE[f_0]<+\infty$, there exists a weak entropy solution $f$ to the equation \eqref{main_kin} in the sense of Definition \ref{def_weak}.
	\end{theorem}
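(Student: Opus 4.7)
The plan is to construct a weak entropy solution by regularization followed by compactness-based limit passage. First, I would regularize the problem: mollify the initial datum to smooth, compactly supported approximants $f_0^\delta \to f_0$ in $L^1_2 \cap L^{1+2/n}$ with $\calE[f_0^\delta] \to \calE[f_0]$, and smooth the interaction kernel to obtain bounded $K^\delta$ with bounded $\nabla K^\delta$. For each $\delta > 0$, I would construct an approximate solution $f^\delta$ by Picard iteration: given $f^k$ with density $\rho^k = \intr f^k\,dv$, define $f^{k+1}$ as the solution of the linear equation
\[
\e \pa_t f^{k+1} + v \cdot \nabla_x f^{k+1} - \sfF^\delta[\rho^k] \cdot \nabla_v f^{k+1} + \tfrac{1}{\e} f^{k+1} = \tfrac{1}{\e} M_\gamma[\rho^k], \quad f^{k+1}(0) = f_0^\delta,
\]
which can be solved explicitly by Duhamel's formula along characteristics driven by $\sfF^\delta[\rho^k]$. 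This preserves nonnegativity and mass; a contraction argument in $L^\infty_t L^1$ on short time intervals yields $f^k \to f^\delta$, extended to $[0,T]$ via the uniform bounds below.

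Next, I would derive the energy inequality \eqref{kin_ineq} for $f^\delta$ uniformly in $\delta$. Testing \eqref{main_kin} formally against the variational derivative $\tfrac{|v|^2}{2} + \tfrac{1}{2}c_{\gamma,d}^{-2/n}(f^\delta)^{2/n} + V + K^\delta * \rho^\delta$ of $\calE$, and using the minimization property that $M_\gamma[\rho]$ minimizes $g \mapsto \intr H[g]\,dv$ subject to $\intr g\,dv = \rho$ (a Lagrange-multiplier computation matching the precise form \eqref{pre}), one arrives at
\[
\intr \bigl(H[f] - H[M_\gamma[\rho_f]]\bigr)\,dv \geq 0.
\]
Together with the growth conditions in {\bf (HV)} and {\bf (HK)}, this yields uniform bounds on second moments, on $\|f^\delta\|_{L^\infty_t(L^{1+2/n})}$, on $\|\rho^\delta\|_{L^\infty_t L^\gamma}$, and on the dissipation integral. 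A truncation step is needed to make the test-function argument rigorous in this low-regularity setting.

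Third, I would pass to the limit $\delta \to 0$. Extract weakly-$*$ convergent subsequences $f^\delta \weakto f$ in $L^\infty_t(L^{1+2/n})$ and establish strong compactness of $\rho^\delta$ via a velocity averaging lemma: the source $M_\gamma[\rho^\delta] - f^\delta$ and drift $\sfF^\delta[\rho^\delta]$ are uniformly bounded in appropriate $L^p_t L^q_x$ spaces (using the a priori estimates and \eqref{condi_hk2}), which gives $\rho^\delta \to \rho$ strongly in $L^r_{loc}$ for some $r$. This is enough to pass to the limit in all nonlinear terms: $M_\gamma[\rho^\delta] \to M_\gamma[\rho]$ by continuity of $\rho \mapsto \rho^{\gamma-1}$, and $\sfF^\delta[\rho^\delta] \cdot \nabla_v f^\delta \to \sfF[\rho] \cdot \nabla_v f$ by pairing the strong convergence of $\sfF^\delta[\rho^\delta]$ with the weak convergence of $f^\delta$ (after integration by parts in $v$, using the moment bound). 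The energy inequality is preserved in the limit by lower semicontinuity of the convex functional $\calE$.

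The main obstacle will be obtaining strong compactness of $\rho^\delta$ and convergence of the nonlocal drift $\nabla K^\delta * \rho^\delta$ in the absence of any $L^\infty$-control---only the entropy-based bounds $f^\delta \in L^\infty_t(L^{1+2/n})$ and $\rho^\delta \in L^\infty_t L^\gamma$ are available, in contrast to the bounded-data settings of \cite{AT19, DMOS07}. The integrability conditions \eqref{condi_hk2} on $(p, q, r)$ appear precisely calibrated so that Young's convolution combined with H\"{o}lder's inequality yields an $L^p_t L^q_x$ bound on $\sfF^\delta[\rho^\delta]$ compatible with both the a priori estimate and the limit passage; matching all exponents coherently across both stages is the most delicate technical point.
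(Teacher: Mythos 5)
Your overall two-stage strategy (regularize the force, construct approximate solutions, then pass to the limit via velocity averaging) matches the paper's architecture, but the construction step diverges at a point where the choice of tool actually matters.

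The paper does \emph{not} use Picard iteration or a contraction argument. It builds mild solutions via the Leray--Schauder fixed-point theorem, and this is not a stylistic preference: the fixed-point map is only \emph{continuous} in $L^\infty((0,T);L^1)$, not Lipschitz. The obstruction sits exactly in the term you would have to contract. Write the Duhamel map as
\[
(\varGamma\rho)(t,z) = e^{-t/\e^2} f_0(\Phi_0^{\rho}(t,z)) + \frac{1}{\e^2}\int_0^t e^{-(t-s)/\e^2} M_\gamma[\rho_s](\Phi_s^{\rho}(t,z))\,ds,
\]
and try to estimate $\|M_\gamma[\eta_s]\circ\Phi_s^{\rho} - M_\gamma[\eta_s]\circ\Phi_s^{\tilde\rho}\|_{L^1}$. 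The two flows are uniformly close, $\|\Phi_s^{\rho}-\Phi_s^{\tilde\rho}\|_\infty \lesssim \|\rho-\tilde\rho\|_{L^\infty(L^1)}$ (Lemma~\ref{lem:lagrangian-stability}), but $M_\gamma[\eta_s]$ is merely an $L^1\cap L^{1+2/n}$ function, so composing it with two nearby measure-preserving flows gives only the $L^1$-modulus of continuity under translation, which tends to $0$ but is \emph{not} $O(\|\rho-\tilde\rho\|)$. Mollifying $f_0$ fixes the first (initial-data) term but not this one: even for smooth Picard iterates $\rho^{k-1}$, the equilibrium $M_\gamma[\rho^{k-1}] = c_{\gamma,d}\bigl(b_0(\rho^{\gamma-1})-|v|^2\bigr)_+^{n/2}$ is only H\"older near the vacuum set $\{\rho^{k-1}=0\}$, with a modulus depending on the iterate. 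So there is no iterate-uniform Lipschitz constant and your contraction does not close. Schauder sidesteps this entirely: compactness of $\Lambda$ (obtained from the velocity averaging lemma on the family $f_\rho = \varGamma\rho$, $\rho$ in a bounded set) plus the a~priori energy bound (Lemma~\ref{lem:energy-inequality}) is all that is needed, and only continuity of $\Lambda$ is required.

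Two smaller points. First, the a~priori energy inequality does need the truncation you mention (the paper replaces $\Psi_n$ by $\Psi_k(s)=\int_0^s k\wedge\Psi_n'$, differentiates, then sends $k\to\infty$ and $\tau\to 1$ by Fatou), so you are right that a direct formal test against $\tfrac{|v|^2}2 + \tfrac12 c_{\gamma,d}^{-2/n}f^{2/n} + V + K*\rho$ is not rigorous here. Second, passing the energy inequality \eqref{kin_ineq} to the $\delta\to 0$ limit is more subtle than ``lower semicontinuity of the convex functional $\calE$'': the dissipation integrand is the \emph{difference} $H[f^\delta]-H[M_\gamma[\rho^\delta]]$, which is not obviously l.s.c.\ under weak convergence of $f^\delta$. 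The paper handles $H[f^\delta]$ by weak l.s.c., handles $H[M_\gamma[\rho^\delta]] = (1+d/2)(\rho^\delta)^\gamma$ by a.e.\ convergence of $\rho^\delta$ upgraded to uniform convergence on compact sets via Egorov/Lusin, establishes the inequality on those sets, and only then sends the exceptional set to zero and the radius to infinity by monotone convergence, using the minimization principle (Lemma~\ref{lem:minimization}) to keep the integrand nonnegative. You would need to reproduce this argument; ``lower semicontinuity of $\calE$'' alone does not deliver the dissipation term in the limit.
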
 		
		
	%
	%
	%
	%
	%
	%

\subsubsection{\bf\em Diffusion limit of \eqref{main_kin}: Derivation of \eqref{main_eq}}

\begin{theorem}\label{thm_main}
Let the assumptions of Theorem \ref{thm_kext} hold, and $\{f^\e\}_{\e>0}$ be a family of weak entropy solutions to \eqref{main_kin} with initial data $\{f_0^\e\}_{\e>0}$ satisfying $\sup_{\e>0}\calE[f_0^\e] <+\infty$. Suppose that
\[
\rho_{f^\e_0} \rightharpoonup \rho_0 \quad \mbox{weakly in $L^\gamma(\bbR^d)$ as $\e\to 0$}.
\]
Then, there exists $\rho \in L^\gamma((0,T) \times \bbR^d)$ with $\sup_{t\in[0,T]} \calF[\rho(t,\cdot)] <+\infty$, where
\[
	\calF[\rho] = \left(1+\frac{d}{2}\right)\|\rho\|_{L^\gamma}^\gamma + \intr V\rho\,dx + \frac12 \intr \rho K\ast\rho\,dx,
\]
such that
\begin{gather*}
  \rho_{f^\e} \to \rho \quad \text{in } L^p((0,T) \times \bbR^d) \quad \text{as $\e\to 0$\; for any } p \in [1,\gamma), \\
  		\rho^\e(t,\cdot)\,\calL^d \rightharpoonup \rho(t,\cdot)\,\calL^d\quad\text{narrowly in $\calP(\bbR^d)$}\quad\text{for every $t\in[0,T]$,}\\
  f^\e \to M_\gamma[\rho] \quad \text{in } L^1((0,T); L^1_{loc}(\bbR^d \times \bbR^d))\quad\text{as $\e \to 0$,}
\end{gather*}
 and $\rho$ is a weak solution to the equation \eqref{main_eq} with the initial condition $\rho(0,x) = \rho_0 \in L^\gamma(\R^d)$, i.e.,
 \[
\intr \varphi(0,x)\rho_0(x)\,dx + 	\int_0^T\!\!\! \intr \partial_t\varphi\,\rho\,dxdt = \int_0^T\!\!\! \intr \big(\Delta\varphi\,\rho^\gamma + \nabla\varphi \cdot \rho \sfF[\rho]\big)\,dxdt,
	\]
	for every $\varphi\in \calC_c^\infty([0,T)\times \bbR^d)$. Here, $\calL^d$ is the $d$-dimensional Lebesgue measure on $\bbR^d$.
\end{theorem}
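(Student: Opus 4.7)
Plan. The proof proceeds by (i) extracting uniform-in-$\e$ bounds from the kinetic entropy inequality \eqref{kin_ineq}; (ii) converting the entropy dissipation into a quantitative closeness of $f^\e$ to its local equilibrium $M_\gamma[\rho_{f^\e}]$; (iii) obtaining strong compactness of $\rho_{f^\e}$ by combining a velocity-averaging argument with time equicontinuity coming from the continuity equation; and (iv) passing to the limit in the moment formulation \eqref{eq:moments}.

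\textbf{A priori bounds and closeness.} From \eqref{kin_ineq} and $\sup_\e\calE[f_0^\e]<+\infty$, I first extract: $f^\e$ bounded in $L^\infty(0,T;L^1_2\cap L^{1+2/n}(\bbR^d\times\bbR^d))$; by the classical interpolation (moment lemma) between the $L^{1+2/n}$-bound and the $|v|^2$-moment, $\rho_{f^\e}$ bounded in $L^\infty(0,T;L^\gamma(\bbR^d))$; and, using the nonnegativity of $V,K$ after the shift, $V\rho_{f^\e}$ and $\rho_{f^\e}K\ast\rho_{f^\e}$ bounded in $L^\infty(0,T;L^1)$. Assumption \textbf{(HV)} combined with Chebyshev gives uniform tightness of $\{\rho_{f^\e}(t,\cdot)\}$. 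Crucially, the dissipation term in \eqref{kin_ineq} yields
\[
\int_0^T\!\!\!\intrr\bigl(H[f^\e]-H[M_\gamma[\rho_{f^\e}]]\bigr)\,dz\,dt\lesssim\e^2,
\]
which, by convexity of the scalar entropy $s\mapsto s^{1+2/n}$ and a Taylor--Lagrange estimate, converts into quantitative $L^p_{\rm loc}$-closeness $\|f^\e-M_\gamma[\rho_{f^\e}]\|_{L^p_{t,x,v}}=O(\e^\alpha)$ for suitable exponents $p,\alpha$ determined by the entropy's local modulus of convexity.

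\textbf{Compactness.} Rewriting the second line of \eqref{eq:moments} as
\[
\frac{m_{f^\e}}{\e}=-\e\,\pa_t m_{f^\e}-\nabla\cdot\!\intr v\otimes v\,f^\e\,dv-\rho_{f^\e}\sfF[\rho_{f^\e}],
\]
the uniform bounds above together with \textbf{(HK)} control $m_{f^\e}/\e$ in a negative-order Sobolev space in $x$, the singular term $\e\pa_t m_{f^\e}$ only surviving as a distribution that vanishes after integration by parts in time. The continuity equation $\pa_t\rho_{f^\e}=-\nabla\cdot(m_{f^\e}/\e)$ then gives equicontinuity of $t\mapsto\rho_{f^\e}(t,\cdot)$ in a negative-order space; spatial local compactness of $\rho_{f^\e}$ is obtained from a velocity-averaging lemma applied to \eqref{main_kin} in the $L^1\cap L^{1+2/n}$ setting, where the source $Q_\gamma[f^\e]/\e$ is controlled by Step 1 and the transport term $\sfF[\rho^\e]\cdot\nabla_v f^\e$ is handled via the exponent constraints \eqref{condi_hk2} on $\nabla K$ together with the uniform $L^\gamma$ bound on $\rho_{f^\e}$. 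An Aubin--Lions--Simon argument then yields $\rho_{f^\e}\to\rho$ strongly in $L^p((0,T)\times\bbR^d)$ for every $p\in[1,\gamma)$; narrow convergence at each $t\in[0,T]$ follows from tightness combined with the time equicontinuity via Arzel\`a--Ascoli in a bounded-Lipschitz metric.

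\textbf{Passing to the limit.} The strong convergence of $\rho_{f^\e}$, combined with the closeness from Step 1 and continuity of $\rho\mapsto M_\gamma[\rho]$, delivers $f^\e\to M_\gamma[\rho]$ in $L^1(0,T;L^1_{\rm loc}(\bbR^d\times\bbR^d))$, and the uniform $|v|^2$-moment control upgrades this to $\intr v\otimes v\,f^\e\,dv\to\intr v\otimes v\,M_\gamma[\rho]\,dv=\rho^\gamma\mathbb{I}_d$ distributionally. Testing the continuity equation against $\varphi\in\calC_c^\infty([0,T)\times\bbR^d)$, substituting the momentum equation, and integrating by parts twice in $x$ in the pressure-tensor term, one obtains in the limit exactly
\[
\intr\varphi(0,x)\rho_0\,dx+\int_0^T\!\!\!\intr\pa_t\varphi\,\rho\,dxdt=\int_0^T\!\!\!\intr\bigl(\Delta\varphi\,\rho^\gamma+\nabla\varphi\cdot\rho\sfF[\rho]\bigr)\,dxdt.
\]
The nonlocal term $\rho_{f^\e}\nabla K\ast\rho_{f^\e}\to\rho\nabla K\ast\rho$ is handled by decomposing $\nabla K$ into near-field ($L^q(B_{2R})$) and far-field ($L^r(\bbR^d\setminus B_R)$) parts per \textbf{(HK)}; the constraints \eqref{condi_hk2} are precisely what is needed so that the resulting H\"older products are controlled by the uniform $L^\gamma$ bound together with the strong $L^p$ convergence of $\rho_{f^\e}$.

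\textbf{Main obstacle.} The hardest step is the strong $L^p$ compactness of $\rho_{f^\e}$ in the absence of any uniform $L^\infty$ bound on $f^\e$, with the nonlocal force $\sfF[\rho^\e]\cdot\nabla_v f^\e$ present. Velocity averaging must be carried out in the $L^{1+2/n}$ framework rather than the standard $L^2$ one, and neither the maximum-principle device of \cite{DMOS07} nor the weak maximum principle of \cite{AT19} is available here. A related secondary difficulty is the justification of the convergence of the pressure tensor: uniform integrability of $|v|^2 f^\e$ must be derived from the entropy bound and the closeness estimate of Step 1 rather than assumed, and this is what rules out, at the level of the limit, any leakage of kinetic energy to infinity in the velocity variable.
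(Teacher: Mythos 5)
Your proposal follows essentially the same route as the paper: uniform bounds from the kinetic entropy inequality, quantitative closeness of $f^\e$ to $M_\gamma[\rho^\e]$ via the Bregman-type relative entropy and Corollary~\ref{cor:diss}, velocity averaging plus Aubin--Lions--Simon for strong compactness of $\rho^\e$, and passage to the limit in the combined moment equations \eqref{eq:moments}. However, there are two genuine gaps.

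Your claim that \textbf{(HV)} combined with Chebyshev yields uniform tightness of $\{\rho_{f^\e}(t,\cdot)\}$ is incorrect: \textbf{(HV)} does not require coercivity of $V$ (indeed $V\equiv 0$ satisfies \textbf{(HV)}), so the energy bound on $\intr V\rho^\e\,dx$ gives no control on mass escaping to spatial infinity. The paper instead proves a separate uniform-in-$\e$ second spatial moment estimate (Lemma~\ref{lem_2m}) by a Gr\"onwall argument on $t\mapsto\intrr x\cdot v\,f^\e\,dz$, exploiting $|\nabla V\cdot x|\le C_V(1+V)$ and $|\nabla K\cdot x|\le C_K(1+K)$ together with the energy inequality; this moment bound is used both for tightness and, in the velocity-averaging step, to remove the spatial cutoff. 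Your plan is missing this entirely.

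Your treatment of the nonlocal convolution term is also too coarse. A bare near-field/far-field split of $\nabla K$ produces H\"older products of the form $\|\nabla K\|_{L^q(B_{2R})}\|\rho^\e-\rho\|_{L^\ell}\|\rho^\e+\rho\|_{L^s}$ with $\tfrac1q+\tfrac1\ell+\tfrac1s=2$, and these do \emph{not} close under the available $L^1\cap L^\gamma$ bounds because $q$ is only constrained through $\tfrac1r-\tfrac1q$ and can be small. The paper first symmetrizes (using symmetry of $K$) so that the near-field integrand contains $\nabla K(x-y)\cdot(\nabla\varphi(x)-\nabla\varphi(y))$, then bounds this by $C_K\|\nabla^2\varphi\|_{L^\infty}(1+|K(x-y)|)$ via the structural hypothesis $|y\cdot\nabla K(y)|\le C_K(1+|K(y)|)$, and only then uses the integrability of $K$ (not $\nabla K$) in $L^p(B_{2R})$ under $\tfrac1p<2-\tfrac2\gamma$. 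Without this symmetrization your estimate would fail for, e.g., the 3D Coulomb case. Two secondary omissions worth noting: the control of $m^\e/\e$ used for time equicontinuity relies on the cancellation $\intr v\,M_\gamma[\rho^\e]\,dv=0$, so that $m^\e=\intr v\,Q_\gamma[f^\e]\,dv$ and Corollary~\ref{cor:diss} applies; and the uniform bound $\sup_t\calF[\rho(t,\cdot)]<\infty$ requires the lower semicontinuity of $\calF$ under narrow convergence together with the minimization property of the local equilibrium.
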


	%
	%
	%
	%
	%
	%

The remainder of this paper is organized as follows. In Section~\ref{sec:mini_p}, we discuss relevant properties of the local equilibrium, including the fact that it satisfies a minimization principle (cf.\ Lemma~\ref{lem:minimization}). This minimization principle offers effective control of $L^{1+2/n}$ dissipation estimates for weak entropy solutions. Section~\ref{sec_uni} is dedicated to establishing the uniform bounds in $\e$ for the kinetic entropy and observable quantities $\rho_{f^\e}$ and $m_{f^\e}$. Under the assumptions on the potentials $V$ and $K$ as stated in Theorem~\ref{thm_main}, we demonstrate uniform bound estimates on $f^\e$, $\rho_{f^\e}$, and $m_{f^\e}$. Given that our analysis is conducted in an unbounded spatial domain, we emphasize the need for careful analysis to establish compactness properties of the solutions. We further explore the relationship between entropy dissipation and relative entropy. In Section~\ref{sec_str}, we employ the velocity averaging lemma to achieve the required strong compactness of $\rho_{f^\e}$. This step is crucial due to the nonlinear diffusion term and the nonlocal interaction term.  Although the velocity averaging lemma is well-developed in various contexts, we provide detailed proof relevant to our specific setting since no direct references are available. Utilizing the estimates obtained in the preceding sections, we prove Theorem~\ref{thm_main} in Section~\ref{sec_pf}. Finally, in Section~\ref{sec_exi}, we demonstrate the existence of weak entropy solutions to the kinetic equation \eqref{main_kin}, thereby proving Theorem~\ref{thm_kext}.

	%
	%
	%
	%
	%
	%
	\section{Properties of the local equilibrium}\label{sec:mini_p}
	This section presents preliminary results regarding the local equilibrium map that will be used in the subsequent analysis. We begin by showing that the local equilibrium minimizes the kinetic entropy $H[f]$. More precisely, let $m_\gamma:[0,\infty)\times \bbR^d \to[0,\infty)$ be defined for $\gamma \in (1, 1 +\frac{2}{d+2}]$ by
	\begin{align}\label{eq:def_m}
		m_\gamma(\rho,v) \coloneqq c_{\gamma,d}\bigl(b_0\rho^{\gamma-1}-|v|^2\bigr)^{n/2}_+,\qquad b_0 \coloneqq \frac{2\gamma}{\gamma-1} = 2\gamma'.
	\end{align}
	We further define the function $\Psi_n\colon [0,+\infty)\to [0,+\infty)$ given by
		\[
			\Psi_n(s)=
		\displaystyle\frac{1}{2c_{\gamma,d}^{2/n}}\frac{s^{1+2/n}}{1+2/n}, \quad n > 0		.
		\]
	Note that $\Psi_n$ is a differentiable, convex function, and that the kinetic entropy can be expressed as
	\[
		H[f](z) = \frac{|v|^2}{2} f(z) + \Psi_n(f(z)),\qquad z\in\bbR^d\times\bbR^d.
	\]

	Then, the following result holds.
	
	\begin{lemma}\label{lem:minimization}
		For every $\rho\in[0,+\infty)$, the function $\bbR^d\ni v\mapsto m_\gamma(\rho,v)$ minimizes the problem 
		\[
			\min\biggl\{ \intr H[g](v)\,dv ~\bigg|~ g\colon\bbR^d\to [0,+\infty)\;\;\text{measurable},\;\intr g(v)\,dv = \rho \biggr\}.
		\]
		Moreover, we have that
		\[
			\intr |v|^2\,m_\gamma(\rho,v)\,dv = d\rho^\gamma,\qquad \intr \Psi_n(m_\gamma(\rho,v))\,dv = \rho^\gamma.
		\]
	\end{lemma}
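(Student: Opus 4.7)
My plan is to handle the two assertions separately: the minimization follows from a convexity-type argument tailored to the explicit form of $\Psi_n$ and $m_\gamma$, while the moment identities reduce to elementary computations with Beta and Gamma functions.

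For the minimization, fix $\rho \ge 0$ and let $g\colon \bbR^d \to [0,+\infty)$ be measurable with $\int_{\bbR^d} g\,dv = \rho$. Since $\Psi_n$ is convex and differentiable with $\Psi_n'(s) = s^{2/n}/(2c_{\gamma,d}^{2/n})$, the tangent-line inequality yields
\[
H[g](v) - H[m_\gamma(\rho,v)] \ge \Bigl(\tfrac{|v|^2}{2} + \Psi_n'(m_\gamma(\rho,v))\Bigr)\bigl(g(v) - m_\gamma(\rho,v)\bigr).
\]
On the open set $\{m_\gamma(\rho,\cdot) > 0\} = \{|v|^2 < b_0\rho^{\gamma-1}\}$, a direct computation from \eqref{eq:def_m} gives $\Psi_n'(m_\gamma(\rho,v)) = \tfrac12(b_0\rho^{\gamma-1} - |v|^2)$, so that the factor in parentheses above collapses to the constant $\tfrac12 b_0\rho^{\gamma-1}$. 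On the complement $\{m_\gamma(\rho,\cdot) = 0\}$ one has $\Psi_n'(m_\gamma) = 0$, $g - m_\gamma = g \ge 0$, and $|v|^2 \ge b_0\rho^{\gamma-1}$, so the same lower bound $\tfrac12 b_0\rho^{\gamma-1}(g - m_\gamma)$ holds pointwise. Integrating and using the mass constraint $\int g\,dv = \int m_\gamma(\rho,\cdot)\,dv = \rho$ then gives $\int H[g]\,dv \ge \int H[m_\gamma(\rho,\cdot)]\,dv$.

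For the moment identities, I set $R := (b_0\rho^{\gamma-1})^{1/2}$ and pass to spherical coordinates, followed by the substitution $r = R\sqrt{s}$. This reduces each of $\int m_\gamma\,dv$, $\int |v|^2 m_\gamma\,dv$, and $\int \Psi_n(m_\gamma)\,dv$ to a Beta integral of the form $\tfrac12 R^\mu\,B(d/2+a, n/2+b+1)$ for suitable $\mu$, $a$, $b$. Inserting the explicit value of $c_{\gamma,d}$ from \eqref{para} and using the identities $\gamma/(\gamma-1) = 1 + (n+d)/2$ and $b_0 = n+d+2$, the Gamma-function factors telescope and yield the claimed values.

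The only real subtlety is in the minimization: the tangent-line inequality by itself loses information on the complement of $\supp m_\gamma(\rho,\cdot)$ because $\Psi_n'(0) = 0$, so one must couple it with the explicit lower bound $|v|^2 \ge b_0\rho^{\gamma-1}$ there—the fact that $m_\gamma(\rho,\cdot)$ is supported exactly where the Lagrange multiplier $\tfrac{b_0\rho^{\gamma-1}}{2} - \tfrac{|v|^2}{2}$ is nonnegative—to recover the Lagrange-multiplier lower bound globally. The moment computations themselves are routine once the Gamma-function bookkeeping is in place.
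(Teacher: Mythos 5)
Your argument is correct and follows essentially the same route as the paper: the convexity (tangent-line) inequality for $\Psi_n$, the explicit computation $\Psi_n'(m_\gamma(\rho,v)) = \tfrac12(b_0\rho^{\gamma-1}-|v|^2)_+$, the sign observation off $\supp m_\gamma(\rho,\cdot)$, and the mass constraint. The only cosmetic differences are that you package the estimate as a single global pointwise Lagrange-multiplier bound $H[g]-H[m_\gamma]\ge \tfrac12 b_0\rho^{\gamma-1}(g-m_\gamma)$ (whereas the paper integrates first and then splits $(\cdot)_+$ into its signed and negative parts), and that you compute $\int |v|^2 m_\gamma\,dv$ directly via a Beta integral, while the paper obtains it by taking the trace of the already-cited identity $\int v\otimes v\,m_\gamma\,dv=\rho^\gamma\,\mathbb{I}_d$.
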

	\begin{proof}
		Due to the convexity of $\Psi_n$, we have for any $s_0, s_1\in[0,\infty)$ that
		\[
		\Psi_n(s_1)\ge \Psi_n(s_0) +\Psi_n'(s_0)(s_1-s_0),\qquad \Psi_n'(s) = \frac{s^{2/n}}{2c_{\gamma,d}^{2/n}}.
		\]
Substituting $s_0$ and $s_1$ with $m_\gamma(\rho,\cdot)$ and an arbitrary function $g=g(v)$, respectively, we obtain
		\begin{equation}\label{ineq}
		\Psi_n(g(v))\ge \Psi_n(m_\gamma(\rho,v))+\Psi'_n(m_\gamma(\rho,v))(g-m_\gamma(\rho,v)).
		\end{equation}
		On the other hand, since
		\[
			\Psi_n'(m_\gamma(\rho,v)) = \frac{1}{2c_{\gamma,d}^{2/n}}\left(m_\gamma(\rho,v)\right)^{2/n}=\frac{1}{2}\left(b_0\rho^{\gamma-1}-|v|^2\right)_+,
		\]
integrating \eqref{ineq} over $v\in\bbR^d$, we deduce
		\begin{align}
		\begin{aligned}\label{eq:psi_n_mini}
		\intr\Psi_n(g(v))\,dv&\ge \intr\Psi_n(m_\gamma(\rho,v))\,dv +\frac{1}{2}\intr\left(b_0\rho^{\gamma-1}-|v|^2\right)_+(g(v)-m_\gamma(\rho,v))\,dv\\
		&= \intr\Psi_n(m_\gamma(\rho,v))\,dv + \frac{1}{2}\intr\left(b_0\rho^{\gamma-1}-|v|^2\right)(g(v)-m_\gamma(\rho,v))\,dv\\
		&\qquad +\frac{1}{2}\intr \left(b_0\rho^{\gamma-1}-|v|^2\right)_-(g(v)-m_\gamma(\rho,v))\,dv.
		\end{aligned}
		\end{align}
		Now, observing
		\[
			\intr H[g](v)\,dv = \intr \left(\frac{|v|^2}{2}g(v)+\Psi_n(g(v))\right)dv,
		\]
		and rearranging the terms in \eqref{eq:psi_n_mini}, we get
		\begin{align*}
		\intr H[g](v)\,dv 
		&\ge \intr H[m_\gamma(\rho,\cdot)](v)\,dv
		+\frac{1}{2}\intr \left(b_0\rho^{\gamma-1}-|v|^2\right)_-(g(v)-m_\gamma(\rho,v))\,dv,
		\end{align*}
		where we used the fact that $\intr(g(v)-m_\gamma(\rho,v))\,dv=0$. Additionally, since $m_\gamma(\rho,\cdot)$ vanishes for every $|v|^2\ge b_0\rho^{\gamma-1}$, we have that
		\[
			\left(b_0\rho^{\gamma-1}-|v|^2\right)_-m_\gamma(\rho,v)=0\qquad\text{for all $v\in\bbR^d$}.
		\]
		This implies the desired minimization principle:
		\begin{align*}
		\intr H[g](v)\,dv &\ge \intr H[m_\gamma(\rho,\cdot)](v)\,dv,
		\end{align*}
		and this completes the proof for the first part of the statement.
		
		As for the second part, recall from the introduction that
		\[
		\intr v\otimes v\, m_\gamma(\rho,v)\,dv = \rho^\gamma\, \mathbb{I}_d.
		\]
		Hence, taking the trace yields
		\[
		\intr |v|^2\, m_\gamma(\rho,v)\,dv = d \rho^\gamma.
		\]
		As for the other term, straightforward computation yields
		\begin{align*}
			\intr \Psi_n(m_\gamma(\rho,v))\,dv &= \frac{1}{2c_{\gamma,d}^{2/n}}\frac{1}{1+2/n}\intr \Bigl(c_{\gamma,d}\bigl(b_0\rho^{\gamma-1}-|v|^2\bigr)^{n/2}_+\Bigr)^{1+2/n}dv \\
			&= \frac{c_{\gamma,d}}{2}\frac{1}{1+2/n} b_0^{\gamma/(\gamma-1)}\rho^\gamma\intr \bigl(1 - |\xi|^2\bigr)_+^{1+n/2} d\xi = \rho^\gamma,
		\end{align*}
		where we used the fact that
		\[
			\intr \bigl(1 - |\xi|^2\bigr)_+^{1+n/2} d\xi 
			= 2(1+n/2) b_0^{-\gamma/(\gamma-1)} c_{\gamma,d}^{-1}.
		\]
		Putting the terms together gives the required expression.
	\end{proof}
	
	From its definition, it is clear to see that $m_\gamma$ satisfies the usual Carath\'eodory conditions. Together with Lemma~\ref{lem:minimization} and the property
	\[
		\intr m_\gamma(\rho,v)\,dv = \rho,
	\]
	we conclude that the map
	\[
		M_\gamma[\rho](x,v) \coloneqq m_\gamma(\rho(x),v)
	\] 
	then defines a Nemytskii operator $M_\gamma\colon \frkR^\gamma\to \frkX^\gamma$ with the spaces $\frkR^\gamma \coloneqq L^1\cap L^\gamma(\bbR^d)$ and
	\[
		\frkX^\gamma \coloneqq \left\{ f\in L^1\cap L^{1+2/n}(\bbR^d\times\bbR^d): \intrr \bigl(1+|v|^2\bigr)\,f(x,v)\,dx\,dv < +\infty\right\}.
	\]
	Then, we will show that the Nemytskii operator $M_\gamma$ is locally Lipschitz below in Lemma \ref{lem:nemytskii}.

	%
	%
	%
	%
	%
	%
 
	\subsubsection*{An extension map}
	
	Before we prove that the Nemytskii operator is locally Lipschitz, we introduce a useful extension. For a nonnegative function $g\colon\bbR^d\to \bbR_+$, we define its extension $\widehat{g}$ by (cf.\ \cite{KSpre})
	\begin{align}\label{eq:extension}
		\bbV\coloneqq \bbR^d\times \R_+ \ni (v,I)\mapsto \widehat g(v,I) \coloneqq b_2  \mathbf{1}_{[0,\,c_{\gamma, d}^{-1} g(v)]} \bigl(I^n\bigr)\;\;\in [0,b_2], 
	\end{align}
	where $c_{\gamma, d}$ and $b_0$ are given in \eqref{para} and \eqref{eq:def_m} respectively, and
	\[
	b_2 \coloneqq \bigl(\pi b_0\bigr)^{-\frac{1}{\gamma-1}}\Gamma\left(\frac{b_0}{2}\right).
	\]
	Note that the function $g$ is recovered by integrating the extension mapping $\widehat g$ with respect to the measure $\sigma(dI)\coloneqq b_1 I^{n-1} dI$:
	\[
	g(v) = \int_{\R_+} \widehat g(v,I)\, \sigma(dI), \qquad b_1 = \frac{2\pi^{n/2}}{\Gamma(n/2)},
	\]
	where we used the fact that $b_1 b_2 = n c_{\gamma, d}$.	Moreover, we find
	\[
	H[g] = \frac12 \int_{\R_+} \bigl(|v|^2 +I^2\bigr)\, \widehat g(v,I)\,\sigma(dI), \qquad \gamma \in \lt(1, 1 + \frac2{d+2} \rt].
	\]
	Notice that $\widehat m_\gamma$ can be expressed as
	\[
		\widehat m_\gamma(\rho,v,I) = b_2\mathbf{1}_{[0,\,b_0\rho^{\gamma-1}]}\bigl(|v|^2 + I^2\bigr).
	\]	
	This together with Lemma \ref{lem:minimization} yields
	\[
		\widehat m_\gamma(\rho,\cdot) = \argmin_{\widehat g}\biggl\{ \iint_\bbV  \bigl(|v|^2 +I^2\bigr)\,\widehat g(v,I) \,\sigma(dI)\,dv : \iint_\bbV\widehat g(v,I) \,\sigma(dI)\,dv = \rho \biggr\}.
	\]
In particular, we have
	\bq\label{diss_eq}
	0\le \intr \lt(H[f] - H[M_\gamma[\rho]]\rt) dv = \frac12\iint_\bbV \bigl(|v|^2 +I^2\bigr)\bigl(\widehat f - \widehat M_\gamma[\rho]\bigr)\sigma(dI)\,dv
	\eq
	for every $f$ with $\intr f\,dv = \rho$ and where $\widehat f$ denotes the extension of $f$.
 
 	\medskip
 
	The extension map above will not only simplify some proofs but will assist in obtaining uniform-in-$\e$ moment control for the BGK-type operator $Q_\gamma[f]$ (cf.\ Proposition~\ref{prop51}). Now, using the extension, we prove that the Nemytskii operator is locally Lipschitz.

	\begin{lemma}\label{lem:nemytskii}
		The Nemytskii operator $M_\gamma\colon \frkR^\gamma\to \frkX^\gamma$ is locally Lipschitz. More precisely,
		\begin{align*}
			\begin{aligned}
			\|M_\gamma[\rho]-M_\gamma[\eta]\|_{L^1} &\le a_\gamma\|\rho-\eta\|_{L^1},\\
			\intrr |v|^2\,|M_\gamma[\rho]-M_\gamma[\eta]|\,dz &\le b_\gamma\bigl(\|\rho\|_{L^\gamma}^{\gamma-1} + \|\eta\|_{L^\gamma}^{\gamma-1}\bigr)\|\rho-\eta\|_{L^\gamma}, \\
			\|M_\gamma[\rho]-M_\gamma[\eta]\|_{L^{1+2/n}} &\le b_\gamma c_{\gamma,d}^{2/n} \bigl(\|\rho\|_{L^\gamma}^{\gamma-1} + \|\eta\|_{L^\gamma}^{\gamma-1}\bigr)\|\rho-\eta\|_{L^\gamma},
			\end{aligned}
			\qquad \forall\,\rho,\eta\in\frkR^\gamma,
		\end{align*}
		with constants $a_\gamma = 2n$ and $b_\gamma = 2n b_0$.
	\end{lemma}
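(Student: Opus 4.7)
The plan is to exploit two structural features of $m_\gamma$: pointwise monotonicity in the density variable, and the explicit moment identities from Lemma~\ref{lem:minimization}. With these at hand, each of the three inequalities reduces to a short computation. The starting observation is that, for every fixed $v\in\bbR^d$, the map $\rho\mapsto m_\gamma(\rho,v)=c_{\gamma,d}(b_0\rho^{\gamma-1}-|v|^2)_+^{n/2}$ is non-decreasing on $[0,\infty)$; equivalently, viewing $\widehat m_\gamma(\rho,v,I)=b_2\mathbf{1}_{[0,\,b_0\rho^{\gamma-1}]}(|v|^2+I^2)$, the set $\{(v,I):|v|^2+I^2\le b_0\rho^{\gamma-1}\}$ grows with $\rho$. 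Consequently, for every fixed $x\in\bbR^d$, the integrand $v\mapsto m_\gamma(\rho(x),v)-m_\gamma(\eta(x),v)$ has constant sign (the sign of $\rho(x)-\eta(x)$), which allows one to pull absolute values outside velocity integrals.

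The $L^1$ estimate now follows directly: the sign property together with $\int m_\gamma(\rho,v)\,dv=\rho$ gives
\[
\int_{\R^d}|m_\gamma(\rho(x),v)-m_\gamma(\eta(x),v)|\,dv=|\rho(x)-\eta(x)|,
\]
so integrating in $x$ yields $\|M_\gamma[\rho]-M_\gamma[\eta]\|_{L^1}=\|\rho-\eta\|_{L^1}$, which is sharper than claimed. For the $|v|^2$-weighted estimate, combining sign constancy with $\int|v|^2 m_\gamma(\rho,v)\,dv=d\rho^\gamma$ produces $\int|v|^2|m_\gamma(\rho,v)-m_\gamma(\eta,v)|\,dv=d|\rho^\gamma-\eta^\gamma|$; the elementary inequality $|\rho^\gamma-\eta^\gamma|\le\gamma(\rho^{\gamma-1}+\eta^{\gamma-1})|\rho-\eta|$ followed by H\"older's inequality with exponents $\gamma/(\gamma-1)$ and $\gamma$ delivers the claim (the resulting constant $d\gamma$ is bounded by $b_\gamma=2nb_0$ throughout the admissible range of $\gamma$).

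For the $L^{1+2/n}$ estimate, the new ingredient is the convexity-type inequality
\[
(a-b)^p\le a^p-b^p\quad\text{for } a\ge b\ge 0,\ p\ge 1,
\]
which is elementary (e.g.\ from subadditivity of $t\mapsto t^{1/p}$, or by checking that $(1-s)^p+s^p\le 1$ on $[0,1]$). Applied pointwise with $p=1+2/n$ and combined with the sign-constancy argument together with the third identity $\int m_\gamma(\rho,v)^{1+2/n}dv=2(1+2/n)c_{\gamma,d}^{2/n}\rho^\gamma$ from Lemma~\ref{lem:minimization}, this yields
\[
\int|m_\gamma(\rho,v)-m_\gamma(\eta,v)|^{1+2/n}\,dv\le 2(1+2/n)c_{\gamma,d}^{2/n}|\rho^\gamma-\eta^\gamma|.
\]
The argument of the previous step (integrate in $x$, apply the $\rho^\gamma-\eta^\gamma$ bound and H\"older) then closes the estimate, the constant being absorbed into $b_\gamma c_{\gamma,d}^{2/n}$.

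The main conceptual hurdle is recognizing the pointwise monotonicity in $\rho$: it collapses all absolute-value complications and converts every relevant velocity integral into a quantity accessible via Lemma~\ref{lem:minimization}. Beyond that, the proof is a careful bookkeeping of constants plus the short verification of $(a-b)^p\le a^p-b^p$ for $p\ge 1$, and no substantive difficulty is anticipated.
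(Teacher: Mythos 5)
Your argument is correct and takes a genuinely different route from the paper. The paper proves the first two estimates in one shot by passing to the extension map $\widehat m_\gamma$ on $\bbV=\bbR^d\times\R_+$ and explicitly computing the volume of the annulus $\{b_0\eta^{\gamma-1}\le |v|^2+I^2\le b_0\rho^{\gamma-1}\}$, and proves the $L^{1+2/n}$ estimate by using the pointwise bound $m_\gamma(\rho,v)^{2/n}\le c_{\gamma,d}^{2/n}b_0\rho^{\gamma-1}$ to reduce to the $L^1$ case. You instead exploit pointwise monotonicity of $\rho\mapsto m_\gamma(\rho,v)$ together with the exact moment identities of Lemma~\ref{lem:minimization}, which turns each velocity integral into a closed-form quantity, plus the elementary inequality $(a-b)^p\le a^p-b^p$ for $a\ge b\ge 0$, $p\ge 1$. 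What your approach buys is shorter bookkeeping (no extension-map integrals) and a sharper $L^1$ constant — you get equality $\|M_\gamma[\rho]-M_\gamma[\eta]\|_{L^1}=\|\rho-\eta\|_{L^1}$ rather than $2n\|\rho-\eta\|_{L^1}$. The paper's approach has the advantage that the same annulus computation drives both the $L^1$ and the $|v|^2$-weighted estimate simultaneously and extends directly to the $I$-augmented moment bounds needed elsewhere (e.g.\ Proposition~\ref{prop51}). Your constants for the second and third inequalities, $d\gamma$ and $2(1+2/n)\gamma\, c_{\gamma,d}^{2/n}$, are indeed dominated by $b_\gamma=2nb_0$ and $b_\gamma c_{\gamma,d}^{2/n}$ respectively over the range $\gamma\in(1,1+\tfrac{2}{d+2}]$, $n\ge 2$, so the stated conclusion holds. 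One small remark: integrating your pointwise bound in $x$ actually produces $\|M_\gamma[\rho]-M_\gamma[\eta]\|_{L^{1+2/n}}^{1+2/n}$ on the left-hand side, not the bare norm; this power is also missing in the paper's own final display, so you have merely inherited a harmless typographical slip from the statement, not introduced an error.
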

	\begin{proof}
		Let $\rho,\eta\in[0,+\infty)$ with $\rho\ge \eta$. Then,
		\begin{align*}
			\intr \bigl(1 + |v|^2\bigr)|m_\gamma(\rho,v)-m_\gamma(\eta,v)|\,dv &\le \iint_\bbV \bigl(1 + |v|^2\bigr)|\widehat m_\gamma(\rho,v,I)- \widehat m_\gamma(\eta,v,I)|\,\sigma(dI)\,dv \\
			&\hspace{-6em}= b_2 \iint_{\{b_0\eta^{\gamma-1}\le I^2 + |v|^2 \le b_0 \rho^{\gamma-1}\}} \bigl(1 + |v|^2\bigr) \,\sigma(dI)\,dv \\
			&\hspace{-6em}\le b_2\Bigl(1 + b_0\rho^{\gamma-1}\bigr)\iint_{\{b_0\eta^{\gamma-1}\le I^2 + |v|^2 \le b_0 \rho^{\gamma-1}\}} \sigma(dI)\,dv \\
			&\hspace{-6em}= b_1b_2\Bigl(b_0^{1/(\gamma-1)} + b_0^{\gamma/(\gamma-1)}\rho^{\gamma-1}\bigr)|\rho-\eta|\intr \bigl(1-|v|^2\bigr)_+^{n/2}dv \\
			&\hspace{-6em}= 2n |\rho-\eta| + 2n b_0\rho^{\gamma-1}|\rho-\eta| \eqqcolon \bigl(\alpha_\gamma + b_\gamma \rho^{\gamma-1}\bigr)|\rho-\eta|.
		\end{align*}
		Furthermore, we have that
		\begin{align*}
			\intr |m_\gamma(\rho,v)-m_\gamma(\eta,v)|^{1+2/n}\,dv &\le \intr m_\gamma^{2/n}(\rho,v)|m_\gamma(\rho,v)-m_\gamma(\eta,v)|\,dv \\
			&\le c_{\gamma,d}^{2/n}b_0\rho^{\gamma-1}\intr |m_\gamma(\rho,v)-m_\gamma(\eta,v)|\,dv \\
			&\le b_\gamma c_{\gamma,d}^{2/n}\rho^{\gamma-1}|\rho-\eta|.
		\end{align*}
		Now let $\rho,\eta\in \frkR^\gamma$, then the first estimate above can be used to deduce
		\begin{align*}
			\bigl\|\bigl(1+|v|^2\bigr)\bigl(M_\gamma[\rho]-M_\gamma[\eta]\bigr)\bigr\|_{L^1} &= \int_{\{\rho\ge \eta\}} \int_\bbV \bigl(1 + |v|^2\bigr)|m_\gamma(\rho(x),v)-m_\gamma(\eta(x),v)|\,\sigma(dI)dv dx \\
			&\qquad+\int_{\{\rho < \eta\}} \int_\bbV \bigl(1 + |v|^2\bigr)|m_\gamma(\rho(x),v)-m_\gamma(\eta(x),v)|\,\sigma(dI)dvdx \\
			&\hspace{-10em}\le a_\gamma \|\rho-\eta\|_{L^1} + b_\gamma\biggl[ \int_{\{\rho\ge \eta\}} \rho^{\gamma-1}(x)|\rho(x)-\eta(x)|\,dx + \int_{\{\rho < \eta\}} \eta^{\gamma-1}(x)|\rho(x)-\eta(x)|\,dx\biggr] \\
			&\hspace{-10em}\le a_\gamma \|\rho-\eta\|_{L^1} + b_\gamma \bigl(\|\rho\|_{L^\gamma}^{\gamma-1} + \|\eta\|_{L^\gamma}^{\gamma-1}\bigr)\|\rho-\eta\|_{L^\gamma},
		\end{align*}
		while the second estimate yields
		\begin{align*}
			\|M_\gamma[\rho]-M_\gamma[\eta]\|_{L^{1+2/n}} &\le b_\gamma c_{\gamma,d}^{2/n}\bigl(\|\rho\|_{L^\gamma}^{\gamma-1} + \|\eta\|_{L^\gamma}^{\gamma-1}\bigr)\|\rho-\eta\|_{L^\gamma},
		\end{align*}
		which, together, implies the local Lipschitz property of the map $M_\gamma$.
	\end{proof}

	%
	%
	%
	%
	%
	%
	\section{Uniform-in-$\e$ estimates}\label{sec_uni}
	
	 In this section, we provide a priori energy inequality estimates and their implications. For notational convenience, we denote $\rho^\e \coloneqq \rho_{f^\e}$, $m^\e \coloneqq m_{f^\e}$, and $u^\e \coloneqq u_{f^\e}$.	
	\begin{lemma}\label{lem_energy} Let $f^\e$ be a solution to \eqref{main_kin} with sufficient regularity. Then we have
\begin{align*}
		\calE[f^\e(t,\cdot)] + \frac{1}{\e^2}\int_0^t\!\!\intrr \bigl(H[f^\e] - H[M_\gamma[\rho^\e]] \bigr)\,dz\,ds   \le \calE[f_0^\e], \qquad\forall\,t\in[0,T].
\end{align*}
	\end{lemma}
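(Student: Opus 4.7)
The plan is to differentiate the functional $\calE[f^\e(t,\cdot)]$ in time and identify the BGK relaxation operator as the source of dissipation. Since $\calE$ is a sum of the kinetic-entropy part $\iint H[f^\e]\,dz$ and two potential parts $\int V\rho^\e\,dx+\frac12\int\rho^\e K\ast\rho^\e\,dx$, the natural test function for the kinetic equation \eqref{main_kin} is the variational derivative of $\calE$, namely $\tfrac{|v|^2}{2}+\Psi_n'(f^\e)+V(x)+(K\ast\rho^\e)(x)$. I would establish the identity by testing against each summand separately and then adding the resulting equations.

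First, multiplying \eqref{main_kin} by $\tfrac{|v|^2}{2}$ and integrating in $(x,v)$: the $x$-transport term vanishes by integration by parts (since $|v|^2/2$ is $x$-independent), while the force term contributes $+\int \sfF[\rho^\e]\cdot m^\e\,dx$ via $\nabla_v(|v|^2/2)=v$ and the $v$-independence of $\sfF[\rho^\e]$. Next, testing against $\Psi_n'(f^\e)$ rewrites $v\cdot\nabla_x f^\e\,\Psi_n'(f^\e)=v\cdot\nabla_x\Psi_n(f^\e)$ and $\sfF\cdot\nabla_v f^\e\,\Psi_n'(f^\e)=\sfF\cdot\nabla_v\Psi_n(f^\e)$, both of which integrate to zero, leaving $\e\partial_t\iint\Psi_n(f^\e)\,dz=\tfrac1\e\iint\Psi_n'(f^\e)Q_\gamma[f^\e]\,dz$. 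For the potential contributions I would invoke the local continuity equation $\e\partial_t\rho^\e+\nabla\cdot m^\e=0$, obtained as the zeroth $v$-moment of \eqref{main_kin} using $\int Q_\gamma[f^\e]\,dv=0$ (the density is preserved by $M_\gamma$), together with the symmetry of $K$, to deduce $\e\partial_t\bigl(\int V\rho^\e\,dx+\tfrac12\int\rho^\e K\ast\rho^\e\,dx\bigr)=\int\sfF[\rho^\e]\cdot m^\e\,dx$. Summing the three identities cancels the force-momentum exchange term $\int\sfF\cdot m^\e\,dx$ and yields the pointwise-in-time balance
\[
\e\,\partial_t\calE[f^\e]\;=\;\tfrac1\e\iint\Bigl(\tfrac{|v|^2}{2}+\Psi_n'(f^\e)\Bigr)Q_\gamma[f^\e]\,dz.
\]

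To convert the right-hand side into a genuine dissipation I would use the convexity of the kinetic entropy density in its first argument, which is just the convexity of $\Psi_n$: for every $z$,
\[
H[M_\gamma[\rho^\e]](z)-H[f^\e](z)\;\ge\;\Bigl(\tfrac{|v|^2}{2}+\Psi_n'(f^\e)\Bigr)\bigl(M_\gamma[\rho^\e]-f^\e\bigr)(z).
\]
Integrating this inequality and inserting it into the balance above produces $\tfrac{d}{dt}\calE[f^\e]\le -\tfrac{1}{\e^2}\iint\bigl(H[f^\e]-H[M_\gamma[\rho^\e]]\bigr)dz$, and integrating on $[0,t]$ gives the stated energy inequality; nonnegativity of the dissipation integrand is then automatic from the minimization principle in Lemma~\ref{lem:minimization}. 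The only nontrivial point is justifying the integrations by parts in $(x,v)$ and differentiating under the integral sign, but these are granted by the ``sufficient regularity'' hypothesis, so no approximation is required at this stage; the delicate approximation that propagates the inequality to weak entropy solutions is deferred to Section~\ref{sec_exi}.
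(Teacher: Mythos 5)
Your proposal is correct and follows essentially the same route as the paper: differentiate the three pieces of $\calE$ in time, observe that the transport terms vanish, cancel the force–momentum exchange $\int\sfF\cdot m^\e\,dx$ between the kinetic energy and the potential parts, and then bound the remaining relaxation term. The only cosmetic difference is in the last step: you invoke the convexity of $\Psi_n$ directly in the form $H[M_\gamma[\rho^\e]]-H[f^\e]\ge\bigl(\tfrac{|v|^2}{2}+\Psi_n'(f^\e)\bigr)\bigl(M_\gamma[\rho^\e]-f^\e\bigr)$, whereas the paper estimates $\tfrac{d}{dt}\iint(f^\e)^p\,dz$ with $p=1+2/n$ and applies Young's inequality to the cross term $(f^\e)^{p-1}M_\gamma[\rho^\e]$; for a pure power $\Psi_n(s)\propto s^p$ these two steps are algebraically identical (Young's inequality for conjugate exponents $p,\tfrac{p}{p-1}$ is exactly the tangent-line inequality for $s\mapsto s^p/p$), so nothing substantive differs.
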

	\begin{proof}
        Direct computation gives
		\[
		\frac{d}{dt}\lt(\frac12\intrr |v|^2 f^\e dz + \intr V \rho^\e dx + \frac12 \intr \rho^\e K * \rho^\e dx \rt) = \frac{1}{2\e^2} \intrr |v|^2(M_\gamma[\rho^\e] - f^\e)\,dz. 
		\]
		To provide a bound for the kinetic entropy, let $p=1+2/n$. Using Young's inequality, we derive
		$$\begin{aligned}
			\frac{d}{dt}\intrr (f^\e)^p\,dz 
			&=-\frac{p}{\e^2}\intrr (f^\e)^{p}\,dz + \frac{p}{\e^2}\intrr (f^\e)^{p-1}M_\gamma[\rho^\e]\,dz\\
			&\le -\frac{p}{\e^2}\intrr (f^\e)^{p}\,dz + \frac{p}{\e^2}\intrr \left(\frac{p-1}{p}(f^\e)^{p}+\frac{1}{p}\bigl(M_\gamma[\rho^\e]\bigr)^p\right) dz\\
			&=-\frac1{\e^2}\intrr (f^\e)^{p}\,dz + \frac1{\e^2}\intrr \bigl(M_\gamma[\rho^\e]\bigr)^p\,dz.
		\end{aligned}$$
		Thus, we have
\begin{align*}
		\frac{d}{dt}\calE[f^\e(t,\cdot)] \leq \frac1{\e^2} \intrr \bigl(H[M_\gamma[\rho^\e]](t,z) - H[f^\e](t,z) \bigr)\, dz.
\end{align*}
	This completes the proof after integrating in time. We remark that Lemma~\ref{lem:minimization} implies that the dissipation term is nonnegative.
		\end{proof}

		Based on the uniform-in-$\e$ estimate on the kinetic entropy, we now provide the uniform-in-$\e$ bound estimates on $\rho^\e$ and $m^\e$ in the proposition below.
		
		\begin{proposition}\label{prop_uni} Let $f^\e$ be a weak entropy solution to \eqref{main_kin}. Suppose that the potentials $V$ and $K$ satisfy the assumptions {\bf (HV)} and {\bf (HK)}, respectively. Moreover, we assume that  the initial total energy is bounded uniformly in $\e$, i.e.,
		\[
		\sup\nolimits_{\e >0}  \calE[f_0^\e] < \infty.
		\]
		Then, we have $f^\e \in L^\infty([0,T]; \frkX^\gamma)$,
		\[
		\rho^\e \in L^\infty([0,T]; \frkR^\gamma), \quad \mbox{and} \quad m^\e \in L^\infty([0,T]; L^p(\bbR^d)) \qquad \text{for all\, $p \in \lt[1,\frac{2\gamma}{\gamma+1}\rt]$}
		\]
		uniformly in $\e>0$. 
		\end{proposition}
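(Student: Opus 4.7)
The plan is to extract the three uniform bounds directly from the energy inequality \eqref{kin_ineq} in Definition~\ref{def_weak}(iii), combined with the minimization principle of Lemma~\ref{lem:minimization}. Under the standing reduction $V, K \geq 0$, each of the three summands defining $\calE[f^\e(t,\cdot)]$ is nonnegative, so the bound $\calE[f^\e(t,\cdot)] \le \calE[f_0^\e] \le C$ yields uniform-in-$\e$ control of
\[
\intrr H[f^\e]\,dz, \qquad \intr V\rho^\e\,dx, \qquad \frac12\intr \rho^\e\, K\ast\rho^\e\,dx
\]
for every $t\in[0,T]$. The kinetic-entropy contribution controls both $\intrr |v|^2 f^\e\,dz$ and $\|f^\e\|_{L^{1+2/n}}^{1+2/n}$, so together with mass conservation $\|f^\e\|_{L^1}=1$ this already places $f^\e$ into $L^\infty([0,T];\frkX^\gamma)$ uniformly in $\e$.

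Next, I would upgrade this to an $L^\gamma$ bound on $\rho^\e$ by applying Lemma~\ref{lem:minimization} fiberwise in $x$: for a.e.\ $x\in\bbR^d$,
\[
\intr H[f^\e](x,v)\,dv \,\ge\, \intr H\bigl[M_\gamma[\rho^\e(x)]\bigr](v)\,dv \,=\, \lt(1+\tfrac{d}{2}\rt)(\rho^\e)^\gamma(x),
\]
where the identity on the right uses both moment formulas from Lemma~\ref{lem:minimization}. Integrating in $x$ and combining with the previous step yields the uniform bound $\|\rho^\e(t,\cdot)\|_{L^\gamma}^\gamma \le C$, which together with $\|\rho^\e(t,\cdot)\|_{L^1}=1$ places $\rho^\e$ in $L^\infty([0,T];\frkR^\gamma)$ uniformly in $\e$.

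For the momentum $m^\e$, the key pointwise estimate is
\[
|m^\e(x)| \,\le\, R\,\rho^\e(x) + \frac{1}{R}\intr |v|^2 f^\e(x,v)\,dv \qquad \text{for every } R>0,
\]
whose minimization in $R$ gives $|m^\e(x)|^2 \le 4\,\rho^\e(x)\,e^\e(x)$ with $e^\e(x) \coloneqq \intr |v|^2 f^\e(x,v)\,dv$. Raising to the power $p/2$ and applying H\"older's inequality with conjugate exponents $(2/(2-p),\, 2/p)$ yields
\[
\|m^\e\|_{L^p}^p \,\le\, 2^p\,\|\rho^\e\|_{L^\gamma}^{\gamma(2-p)/2}\,\|e^\e\|_{L^1}^{p/2},
\]
since choosing $p/(2-p)=\gamma$ gives precisely $p=\frac{2\gamma}{\gamma+1}$. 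Both factors on the right are uniformly bounded by the preceding two paragraphs. The endpoint $\|m^\e\|_{L^1} \le 2\|\rho^\e\|_{L^1}^{1/2}\|f^\e\|_{L^1_2}^{1/2}$ follows from Cauchy--Schwarz applied to the pointwise bound, and standard interpolation covers the full range $p\in[1,\tfrac{2\gamma}{\gamma+1}]$.

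I anticipate no serious obstacle here: the argument is purely a priori and relies only on the nonnegativity of the three energy summands, the minimization principle for $H$, and a classical $L^p$ interpolation for the first velocity moment. The one delicate point is the nonnegativity reduction $V,K\geq 0$, which is what allows the three contributions to $\calE[f^\e]$ to be bounded separately and, in particular, to avoid absorbing a possibly sign-indefinite interaction energy into the kinetic entropy.
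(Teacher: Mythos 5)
Your proposal is correct and follows essentially the same route as the paper: the uniform energy inequality (with the $V,K\ge 0$ reduction) gives the $\frkX^\gamma$ bound on $f^\e$, the fiberwise application of Lemma~\ref{lem:minimization} gives the $L^\gamma$ bound on $\rho^\e$, and the momentum bound is the same Hölder argument — your pointwise bound $|m^\e|^2 \le 4\rho^\e e^\e$ is just the Cauchy–Schwarz/Jensen input the paper uses, phrased slightly differently. The only cosmetic difference is that you estimate the endpoint $p=2\gamma/(\gamma+1)$ and interpolate, while the paper runs the Hölder split directly for every $p\in[1,2\gamma/(\gamma+1)]$; the two are interchangeable.
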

		\begin{proof} 
            From Lemma \ref{lem_energy}, we obtain
			\begin{align*}
				\begin{aligned}
					\sup_{t\in[0,T]}\intrr H[f^\e](t,z)\,dz  &\leq \sup_{t\in[0,T]}\calE[f^\e(t,\cdot)] \le \calE[f_0^\e] < +\infty
				\end{aligned}
			\end{align*}
			uniformly in $\e\in(0,1)$. In particular, $f^\e \in L^\infty([0,T]; \frkX^\gamma)$ is uniformly bounded.
			
			We now show that the previous uniform-in-$\e$ estimate implies the desired estimates for $\rho^\e$ and $m^\e$. Indeed, from Lemma \ref{lem:minimization}, we obtain
			\bq\label{pres}
				\intr (\rho^\e)^\gamma\,dx = \intrr H[M_{\gamma}[\rho^\e]]\,dz \leq \intrr H[f^\e]\,dz. 
			\eq
			Thus, $\|\rho^\e\|_{L^\gamma}$ is uniformly bounded. The uniform bound for $\|\rho^\e\|_{L^1}$ follows from the conservation of mass property for solutions $f^\e$ with sufficient regularity.
			
			Next, by Jensen's inequality, we get
			\[
			\intr \rho^\e |u^\e|^2\,dx  \leq \intrr |v|^2 f^\e \,dz,\qquad u^\e \coloneqq \frac{m^\e}{\rho^\e}.
			\]
			Combining these results, we get the following bound for $m^\e$ for any $p \in [1, \frac{2\gamma}{\gamma+1}]$:
			\[
			\intr |m^\e|^p\,dx \le \lt(\intr \rho^\e|u^\e|^2\,dx\rt)^{\frac p2}\lt(\intr |\rho^\e|^{\frac p{2-p}} \,dx\rt)^{\frac{2-p}2}\le\left(\intrr |v|^2f^\e \,dz\right)^{\frac{p}{2}}\|\rho^\e\|_{L^{\frac{p}{2-p}}}^{\frac{p}{2}}.
			\]
			In this range of $p$, we have $\frac p{2-p} \leq \gamma$, and thus we use $\|\rho^\e\|_{L^1}=1$ and the $L^p$-interpolation inequality to obtain
			\begin{align*}
				\intr |m^\e|^p\,dx & \lesssim \left(\intrr |v|^2f^\e dz\right)^{\frac{p}{2}}\|\rho^\e\|^{\frac{\gamma(p-1)}{\gamma-1}}_{L^\gamma}\\
				&\lesssim \left(\intrr |v|^2f^\e dz\right)^{\frac{p}{2}}\left(\intrr H[f^\e]\,dz\right)^{\frac{p-1}{\gamma-1}}
				\lesssim \left(\intrr H[f^\e]\,dz\right)^{\frac{p}{2}+\frac{p-1}{\gamma-1}},
			\end{align*}
			where we again used \eqref{pres} in the second inequality, thereby obtaining the uniform-in-$\e$ estimate for the momentum $\|m^\e\|_{L^p}$ for $p\in[1,\frac{2\gamma}{\gamma+1}]$. 
		\end{proof}
		
	%
	%
	%
	%
	%
	%
		\subsection{Second spatial moment bound and tightness}
		
		To establish strong compactness for $\rho^\e$ in the whole space $\bbR^d$, we will require uniform bounds on the second spatial moment of $f^\e$.
		
		\begin{lemma}\label{lem_2m}
			Let $f^\e$ be a solution to \eqref{main_kin} with sufficient regularity. Suppose that the potentials $V$ and $K$ satisfy the assumptions  ${\bf (HV)}$ and ${\bf (HK)}$, respectively. Moreover, we assume that the family of initial data satisfy
			\[
				\sup_{\e >0} \left\{\calE[f_0^\e] + \intrr |x|^2 f_0^\e\,dz\right\} <+\infty.
			\] 
			Then, we have
			\[
				\sup_{\e>0}\sup_{t\in[0,T]}\intrr |x|^2 f^\e(t,z)\,dz <+\infty.
			\]
		\end{lemma}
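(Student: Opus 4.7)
The plan is to derive an evolution equation for the second spatial moment $Y^\e(t) := \intrr |x|^2 f^\e(t,z)\,dz = \intr |x|^2 \rho^\e(t,x)\,dx$ and close the estimate using the moment hierarchy \eqref{eq:moments} together with the $\e$-uniform energy bounds established in Proposition~\ref{prop_uni}. The naive approach of multiplying \eqref{main_kin} by $|x|^2$ and integrating gives $\e \frac{d}{dt} Y^\e = 2\iint x\cdot v\, f^\e\,dz$ (after integrating by parts in $x$ and noting that the force and BGK terms drop out), leaving a $1/\e$ blowup upon dividing. The cure is to use the momentum equation in \eqref{eq:moments} as well, and track a suitably modified moment so that the singular contributions cancel.

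Concretely, from the continuity equation $\e\pa_t\rho^\e + \nabla\cdot m^\e = 0$ I would first write $\e\frac{d}{dt}\intr |x|^2\rho^\e\,dx = 2\intr x\cdot m^\e\,dx$. Then from the momentum equation, multiplying by $x$ and integrating by parts (using $\intr x\cdot\nabla\cdot P^\e\,dx = -\intr \tr P^\e\,dx$ with $\tr P^\e = \intr |v|^2 f^\e\,dv$), I would obtain
\[
\frac{d}{dt}\intr x\cdot m^\e\,dx = \frac{1}{\e}\intr\tr P^\e\,dx - \frac{1}{\e}\intr x\cdot\rho^\e\sfF[\rho^\e]\,dx - \frac{1}{\e^2}\intr x\cdot m^\e\,dx.
\]
Introducing the modified moment $N^\e(t) := \intr |x|^2\rho^\e\,dx + 2\e\intr x\cdot m^\e\,dx$, the $1/\e$ terms cancel and one arrives at the clean identity
\[
\frac{d}{dt}N^\e(t) = 2\intrr |v|^2 f^\e\,dz - 2\intr x\cdot\rho^\e\,\sfF[\rho^\e]\,dx.
\]

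The right-hand side is bounded uniformly in $\e$: the first term is $\leq 4\intrr H[f^\e]\,dz \leq 4\calE[f_0^\e]$ by Proposition~\ref{prop_uni}. For the force, write $\sfF[\rho^\e] = \nabla V + \nabla K\ast\rho^\e$. The external part is controlled via \textbf{(HV)}: $|\intr x\cdot\rho^\e\nabla V\,dx| \leq C_V\intr (1+V)\rho^\e\,dx$, bounded by $C(1+\calE[f_0^\e])$. For the interaction part, the symmetrization identity (valid since the energy $\frac12\intr \rho K\ast\rho\,dx$ forces WLOG $K$ even)
\[
\intr x\cdot\rho^\e\,\nabla K\ast\rho^\e\,dx = \frac{1}{2}\intr\!\!\intr (x-y)\cdot\nabla K(x-y)\,\rho^\e(x)\rho^\e(y)\,dxdy,
\]
together with the growth condition in \textbf{(HK)}, yields a bound by $C(1 + \intr\rho^\e K\ast\rho^\e\,dx)$, again bounded by the energy. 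Integrating in time then gives $N^\e(t) \leq N^\e(0) + C(1+T)$ uniformly in $\e$.

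Finally, to recover $Y^\e(t)$ from $N^\e(t)$, I use Cauchy--Schwarz and Jensen's inequality on the cross term:
\[
\lt|2\e\intr x\cdot m^\e\,dx\rt| \leq \frac{1}{2}Y^\e(t) + 2\e^2\intr \frac{|m^\e|^2}{\rho^\e}\,dx \leq \frac{1}{2}Y^\e(t) + 2\e^2\intrr |v|^2 f^\e\,dz.
\]
Absorbing the $\frac12 Y^\e$ yields $Y^\e(t) \leq 2N^\e(t) + C\e^2$, and the analogous estimate at $t=0$ combined with the assumption $\sup_\e\intrr |x|^2 f_0^\e\,dz < +\infty$ gives $N^\e(0) \leq C$. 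This closes the uniform bound for $\e\in(0,1)$. The main obstacle is genuinely the singular $1/\e$ scaling: no single moment equation suffices, and the insight is to combine the zeroth and first moment balances into one quantity whose derivative is $O(1)$; secondary care is required in the force estimate, where symmetrization against the (even) kernel $K$ is what allows \textbf{(HK)} to apply through the factor $x-y$ rather than the unbounded $x$.
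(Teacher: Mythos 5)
Your proof is correct and uses the same two dynamical ingredients as the paper — the continuity-equation identity $\e\frac{d}{dt}\intr |x|^2\rho^\e\,dx = 2\intr x\cdot m^\e\,dx$ and the ODE for the cross-moment $\intr x\cdot m^\e\,dx = \intrr x\cdot v\,f^\e\,dz$, with the force term controlled via \textbf{(HV)} and the symmetrization $\intr x\cdot(\nabla K*\rho^\e)\rho^\e\,dx = \frac12\iint(x-y)\cdot\nabla K(x-y)\rho^\e(x)\rho^\e(y)\,dxdy$ under \textbf{(HK)} — but you organize them differently. The paper's proof keeps the singular $-\e^{-2}$ damping term in the cross-moment ODE, solves it explicitly via Gr\"onwall to get $\intrr x\cdot v\,f^\e\,dz = O(\e)$, and then integrates $\frac1\e\int_0^t$ of that; this makes the mechanism ($O(\e)$ decay from the strong relaxation) visible but requires writing out the variation-of-constants formula. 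You instead form the corrector $N^\e = \intr|x|^2\rho^\e\,dx + 2\e\intr x\cdot m^\e\,dx$, observe that all $\e^{-1}$ and $\e^{-2}$ contributions cancel identically so that $\frac{d}{dt}N^\e$ is $O(1)$, integrate once, and then remove the $O(\e)$ corrector by Cauchy--Schwarz/Jensen; this is arguably cleaner and avoids the ODE solution formula, at the modest cost of the extra absorption step to pass from $N^\e$ back to $Y^\e$. Both yield the same uniform bound, and both require exactly the hypotheses used: the uniform energy bound from Lemma~\ref{lem_energy}, the uniform initial second spatial moment, and the structural assumptions on $V$ and $K$ (with $K$ taken even, as is implicit in the paper since only the symmetric part of $K$ enters $\calE$).
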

		\begin{remark} If the external potential $V$ is given as quadratic confinement, i.e., $V(x) = |x|^2/2$, then it follows directly from Lemma \ref{lem_energy} that the uniform bound on the second spatial moment holds.
		\end{remark}
		\begin{proof}[Proof of Lemma \ref{lem_2m}]
            We begin by noting that
			\[\frac{1}{2}\frac{d}{dt}\intrr |x|^2 f^\e \,dz=\frac{1}{\e}\intrr x\cdot v\, f^\e\, dz,\]
			which gives
			\[
				\frac{1}{2}\intrr |x|^2f^\e(t,z)\, dz\le \frac{1}{2}\intrr |x|^2f^\e_0(z)\,dz+\frac{1}{\e}\int_0^t\!\!\intrr x\cdot v\,f^\e(s,z)\,dz\,ds.
			\]
			Thus, it suffices to show that
			\[
				\sup_{\e> 0}\sup_{t\in[0,T]}\left|\frac{1}{\e}\int_0^t\!\!\intrr x\cdot v\, f^\e(s,z)\,dz\,ds\right| <+\infty.
			\]
			On the other hand, we have
			\begin{align*}
				\frac{d}{dt}\intrr x\cdot v\,f^\e dz 
				&=\intrr (x\cdot v)\left(-\frac{1}{\e}v\cdot\nabla_x f^\e+\frac{1}{\e}\sfF[\rho^\e]\cdot\nabla_v f^\e +\frac{1}{\e^2}(M_{\gamma}[\rho^\e]-f^\e)\right) dz\\
				&=\frac{1}{\e}\intrr |v|^2f^\e dz - \frac{1}{\e}\intrr x\cdot \sfF[\rho^\e]\,f^\e dz -\frac{1}{\e^2}\intrr x\cdot v\,f^\e dz\cr
                & \eqqcolon I_1 + I_2 + I_3.
			\end{align*}
Due to assumptions ${\bf (HV)}$ and ${\bf (HK)}$, we easily deduce
\[
	I_1 + I_2 \leq \frac{C}{\e}\lt(2 + \intrr |v|^2f^\e dz + \intr V \rho^\e dx + \frac{1}{2}\intr \rho^\e K*\rho^\e dx \rt),
\]
with constant $C=\max\{2,C_V,C_K\}$. With the energy estimate provided by Lemma~\ref{lem_energy}, we obtain
\[
	\frac{d}{dt}\intrr x\cdot v\,f^\e dz \le \frac{C}{\e}\bigl(2+\calE[f_0^\e]\bigr) -\frac{1}{\e^2}\intrr x\cdot v\,f^\e dz.
\]
Then, Gr\"onwall's lemma yields
		\[
			\intrr x\cdot v\,f^\e(t,z)\,dz\le e^{-t/\e^2}\intrr x\cdot v\,f^\e_0(z)\,dz + C\e\bigl(2+\calE[f_0^\e]\bigr),
		\]
which implies  
			\[
			\frac{1}{\e}\int_0^t\!\!\intrr x\cdot v\,f^\e(s,z)\,dz\,ds\le \e\bigl(1 - e^{-t/\e^2}\bigr)\intrr x\cdot v\,f^\e_0(z)\,dz + C\bigl(2+\calE[f_0^\e]\bigr)T,
			\]
			thus concluding the proof.
		\end{proof}
 	
	%
	%
	%
	%
	%
	%
		
	\subsection{Implication of control on dissipation}
This subsection provides an essential estimate implied by the control on the dissipation term as expressed in Lemma~\ref{lem_energy}, which reads
	\begin{equation}\label{mini}
		0\le \frac{1}{\e^2}\int_0^t\!\!\intrr \bigl(H[f]-H[M_\gamma[\rho_f]]\bigr)\,dz\,ds \le \calE[f_0^\e].
	\end{equation}
	To derive a meaningful estimate from \eqref{mini}, we introduce the following relative entropy functional (or Bregman divergence) for $n>0$ defined by
	\begin{align*}
		\mathfrak{b}_{n}(f|g)&\coloneqq \Psi_n(f)-\Psi_n(g)-\Psi_n'(g)(f-g) =\frac{1}{2c^{2/n}_{\gamma,d}}\left(\frac{f^{1+2/n}}{1+2/n}-\frac{g^{1+2/n}}{1+2/n}-g^{2/n}(f-g)\right).
	\end{align*}
	Note that since $\Psi_n$ is a convex function, the relative entropy $\mathfrak{b}_{n}$ is non-negative for $f,g\ge 0$.
	
	\medskip
	The following result provides a relationship between $\mathfrak{b}_{n}$ and $H$.
	
	\begin{lemma}\label{lem:diss_bregman}
	For every $\rho$ and $f$ with $\intr f\,dv = \rho$, 
		\[
			\intr \mathfrak{b}_n(f|M_\gamma[\rho])\,dv \le \intr \bigl(H[f]-H[M_{\gamma}[\rho]]\bigr)\,dv.
		\]
	\end{lemma}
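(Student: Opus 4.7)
My plan is to reduce the inequality to a pointwise identity plus the support property of the equilibrium, with no quantitative estimates required. A direct expansion of $H$ and $\mathfrak{b}_n$ in terms of $\Psi_n$ shows that the $\Psi_n$-contributions cancel, leaving the pointwise identity
\[
\bigl(H[f] - H[M_\gamma[\rho]]\bigr) - \mathfrak{b}_n(f|M_\gamma[\rho]) = \left(\frac{|v|^2}{2} + \Psi_n'(M_\gamma[\rho])\right)(f - M_\gamma[\rho]).
\]
So the task reduces to showing that the integral of the right-hand side over $v \in \bbR^d$ is nonnegative.

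Next I would use the explicit form of $M_\gamma[\rho]$ from \eqref{pre} together with $\Psi_n'(s) = \frac{s^{2/n}}{2c_{\gamma,d}^{2/n}}$; a short calculation (already implicit in the proof of Lemma~\ref{lem:minimization}) yields $\Psi_n'(M_\gamma[\rho]) = \frac{1}{2}(b_0\rho^{\gamma-1} - |v|^2)_+$, so
\[
\frac{|v|^2}{2} + \Psi_n'(M_\gamma[\rho]) = \frac{b_0\rho^{\gamma-1}}{2} + \frac{1}{2}\bigl(|v|^2 - b_0\rho^{\gamma-1}\bigr)_+.
\]
The mass-matching hypothesis $\intr f\,dv = \intr M_\gamma[\rho]\,dv = \rho$ makes the additive constant $\tfrac{b_0\rho^{\gamma-1}}{2}$ integrate to zero against $f - M_\gamma[\rho]$, so the target integral collapses to $\tfrac{1}{2}\intr (|v|^2 - b_0\rho^{\gamma-1})_+ (f - M_\gamma[\rho])\,dv$.

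Finally, the key observation is that from \eqref{pre}, $M_\gamma[\rho]$ is supported in $\{|v|^2 \le b_0\rho^{\gamma-1}\}$, so $(|v|^2 - b_0\rho^{\gamma-1})_+\,M_\gamma[\rho] \equiv 0$ and the remaining integral equals $\tfrac{1}{2}\intr (|v|^2 - b_0\rho^{\gamma-1})_+\,f\,dv \ge 0$, since $f \ge 0$. There is no real obstacle here; the content is precisely that the Bregman divergence absorbs the first-order $\Psi_n$-correction, while the $|v|^2$-weight combined with the sharp threshold of the equilibrium's support kills the only potentially negative contribution once the mass constraint is applied.
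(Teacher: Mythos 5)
Your proof is correct and is essentially the paper's own argument, just reorganized: you isolate the pointwise difference $H[f]-H[M_\gamma[\rho]]-\mathfrak{b}_n(f|M_\gamma[\rho])=\bigl(\tfrac{|v|^2}{2}+\Psi_n'(M_\gamma[\rho])\bigr)(f-M_\gamma[\rho])$ and show it integrates to something nonnegative, while the paper expands $\mathfrak{b}_n$ and bounds it from above by the entropy difference. The two rely on the identical ingredients — the identity $\Psi_n'(M_\gamma[\rho])=\tfrac12(b_0\rho^{\gamma-1}-|v|^2)_+$, the mass constraint killing the constant $\tfrac{b_0\rho^{\gamma-1}}{2}$, and the support observation (your $(|v|^2-b_0\rho^{\gamma-1})_+$ is the paper's $(b_0\rho^{\gamma-1}-|v|^2)_-$).
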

	 \begin{proof}
 				The estimate follows from the following computation for almost every $x\in\bbR^d$:
		\begin{align*}
		\begin{aligned}
			\intr \mathfrak{b}_n(f|M_{\gamma}[\rho])\,dv&=\intr \Psi_n(f)-\Psi_n(M_{\gamma}[\rho])-\frac{1}{2c^{2/n}_{\gamma,d}}\bigl(M_{\gamma}[\rho]\bigr)^{2/n}(f-M_{\gamma}[\rho])\,dv\\
			&\hspace{-4em}=\intr\Psi_n(f)-\Psi_n(M_{\gamma}[\rho])-\frac{1}{2}\left(b_0\rho^{\gamma-1}-|v|^2\right)_+(f-M_{\gamma}[\rho])\,dv\\
			&\hspace{-4em}=\intr \bigl(H[f]-H[M_{\gamma}[\rho]]\bigr)\,dv 
			-\frac12 \intr\left(b_0\rho^{\gamma-1}-|v|^2\right)_-(f-M_{\gamma}[\rho])\,dv\\
			&\hspace{-4em}\le \intr \bigl(H[f]-H[M_{\gamma}[\rho]]\bigr) dv,
			\end{aligned}
		\end{align*}
		where we used the fact that the support of $M_\gamma[\rho]$ is given by
		\[
		\text{supp}\,M_\gamma[\rho] =  \lt\{v\in\bbR^d : |v|^2\le b_0\rho^{\gamma-1}\rt\}.	\qedhere
		\]
 	\end{proof}

	The following result shows that the relative entropy provides control over the $L^{1+2/n}$-norm.

	\begin{lemma}\label{lem:diss}Let $n\ge 2$ and $f, g \in L^{1+2/n}(\bbR^d \times \bbR^d)$ with $f,g\ge 0$. Then for any measurable set $B\subset \bbR^d$, 
		\[
			\|f-g\|_{L^{1+2/n}(B\times\bbR^d)}^2 \leq \mathsf{c}_n \max\Bigl\{\|f\|_{L^{1+2/n}}^{1-2/n},\|g\|_{L^{1+2/n}}^{1-2/n}\Bigr\}\iint_{B\times\bbR^d} \mathfrak{b}_n(f|g)\,dz,
		\]
		where $\sfc_n > 0$ depends only on $n$ and $d$.
		\end{lemma}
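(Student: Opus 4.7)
The plan is to reduce the integral inequality to a pointwise lower bound on $\mathfrak{b}_n$ and then combine it with a single application of H\"older's inequality with a carefully chosen decomposition. Set $p = 1 + 2/n$; the assumption $n\ge 2$ gives $p\in(1,2]$, with $p=2$ corresponding to $n=2$. Up to the multiplicative constant $1/(2c_{\gamma,d}^{2/n})$, the integrand $\mathfrak{b}_n(f|g)$ is exactly the Bregman divergence associated to the convex function $h(s)=s^p/p$, and this is what makes the classical Bregman machinery applicable.

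The key pointwise estimate I would establish is
\[
\mathfrak{b}_n(a|b) \ge c_n\,(a-b)^2 \max\{a,b\}^{p-2} \qquad \text{for all } a,b\ge 0.
\]
Assuming without loss of generality $a\ge b$, the Taylor remainder identity
\[
h(a)-h(b)-h'(b)(a-b) \,=\, (p-1)\int_b^a (a-s)\,s^{p-2}\,ds,
\]
combined with the monotonicity $s^{p-2}\ge a^{p-2}$ on $[b,a]$ (valid since $p-2\le 0$), immediately yields the bound. The boundary case $b=0$ is harmless: since $p-2>-1$, the integral is improper but convergent, and the direct evaluation $h(a)-h(0)=a^p/p$ confirms the inequality there. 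This step is the only genuine work in the proof and is where the condition $n\ge 2$ (equivalently $p\le 2$) is used in an essential way.

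With this pointwise bound in hand, I introduce the H\"older conjugate exponents $2/p$ and $2/(2-p)$ (both $\ge 1$ since $p\in(1,2]$) and use the algebraic identity
\[
|f-g|^p = \bigl(|f-g|^2 \max\{f,g\}^{p-2}\bigr)^{p/2}\cdot \max\{f,g\}^{p(2-p)/2},
\]
so that H\"older's inequality on $B\times\bbR^d$ gives
\[
\iint_{B\times\bbR^d} |f-g|^p\,dz \le \left(\iint_{B\times\bbR^d} |f-g|^2\max\{f,g\}^{p-2}\,dz\right)^{p/2}\left(\iint_{B\times\bbR^d} \max\{f,g\}^p\,dz\right)^{(2-p)/2}.
\]
The first factor is controlled by a constant times $\iint_{B\times\bbR^d} \mathfrak{b}_n(f|g)\,dz$ via Step~1, while the second factor is dominated by $(\|f\|_{L^{1+2/n}}^p+\|g\|_{L^{1+2/n}}^p)^{(2-p)/2} \le 2^{(2-p)/2}\max\{\|f\|_{L^{1+2/n}},\|g\|_{L^{1+2/n}}\}^{p(2-p)/2}$ using the crude bound $\max\{f,g\}^p\le f^p+g^p$. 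Raising both sides to the $2/p$-th power collapses the exponent on the norm factor to $2-p = 1-2/n$, producing exactly the asserted inequality with $\mathsf{c}_n$ depending only on $n$ and $c_{\gamma,d}$. The remaining work past Step~1 is purely exponent bookkeeping through conjugate H\"older exponents.
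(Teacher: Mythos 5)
Your proof is correct and follows essentially the same strategy as the paper's: a quadratic Taylor-type lower bound on the Bregman divergence, followed by H\"older's inequality with conjugate exponents $2/p$ and $2/(2-p)$. The only difference is cosmetic---you invoke the integral form of the remainder to arrive directly at the pointwise bound $\mathfrak{b}_n(a|b)\gtrsim(a-b)^2\max\{a,b\}^{p-2}$, whereas the paper uses the Lagrange (mean-value) form and then splits the domain into $\{f>g\}$ and $\{f\le g\}$ to dominate the intermediate point $\xi$ by $f$ or $g$; your variant folds that split into a single pointwise step and treats $g=0$ by direct evaluation rather than the paper's $f_\delta$-regularization.
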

	\begin{proof} Our proof is motivated by \cite[Proposition 3.1]{CCD02}. We first use Taylor's theorem to the function $s\mapsto s^{p}/p$ for $1< p\le 2$, obtaining
		\begin{equation}\label{taylor}
			\frac{1}{p}\left(f^{p}-g^{p}-pg^{p-1}(f-g)\right)=\frac{1}{2(p-1)}\xi^{p-2}|f-g|^2\qquad\text{for $f>0$,\; $g\ge0$},
		\end{equation}
		where $\xi>0$ lies between $f$ and $g$. The case $f\ge0$ can be handled by taking $f_\delta(z)=f(z)+\delta e^{-|z|^2}$ and letting $\delta\to0$ using dominated convergence theorem as in \cite{CCD02}.
		
		Next, for any measurable set $A\subset \R^{d}\times\R^d$, the H\"older inequality gives
		\[
			\iint_A |f-g|^p\,dz =\iint_A \xi^{\frac{p(p-2)}{2}}\xi^{\frac{p(2-p)}{2}}|f-g|^p\, dz\le\left(\iint_A \xi^{p-2}|f-g|^2\,dz\right)^{p/2}\|\xi\|_{L^p(A)}^{(2-p)p/2}.
		\]
		We now use the above inequality for the sets $A_1=\{f>g\}$ and $A_2=\{f\le g\}$ respectively.
		
		On $A_1$, we have $\xi\le f$ and therefore, $\xi^p\ge f^p$. Hence,
		\begin{align*}
			\|f-g\|_{L^p(A_1)}^2 \le\left(\iint_{A_1} \xi^{p-2}|f-g|^2\,dz\right)\|f\|_{L^p}^{2-p}.
		\end{align*}
		On the set $A_2$, we use $\xi\le g$, i.e., $\xi^p\le g^p$, to obtain
		\[
			\|f-g\|_{L^p(A_2)}^2 \le \left(\iint_{A_2} \xi^{p-2}|f-g|^2\,dz\right) \|g\|_{L^p}^{2-p}.
		\]
		Combining the results of both cases and the expression \eqref{taylor}, we obtain for $p=1+2/n$,
		\begin{align*}
			\|f-g\|_{L^p}^2 &\le 2\max\Bigl\{\|f\|_{L^p}^{2-p},\|g\|_{L^p}^{2-p}\Bigr\}\intrr |f-g|^2\xi^{p-2}\,dz \\
			&\le C_n\max\Bigl\{\|f\|_{L^p}^{2-p},\|g\|_{L^p}^{2-p}\Bigr\}\intrr \mathfrak{b}_n(f|g)\,dz,
		\end{align*}
		with constant $C_n\coloneqq 16\, c_{\gamma,d}^{2/n}/n$.
	\end{proof}
	
	We invoke the following result from \cite[Proposition 4.6]{BV05} as a final ingredient before summarizing the uniform-in-$\e$ estimate based on the control of the dissipation term.

		\begin{proposition}\label{prop51}
			Let $\widehat f\colon \bbV\to [0,b_2]$ be the extension of $f\colon \bbR^d\to [0,+\infty)$ (cf.\ \eqref{eq:extension}). Define
			\[
			\widehat F \coloneqq  \iint_{\bbV} \bigl(|v|^2 + I^2\bigr) \bigl|\widehat f-\widehat M_\gamma[\rho_f]\bigr|(v,I) \,\sigma(dI)\,dv
			\]
			and
			\[
			\widehat D \coloneqq \iint_{\bbV} \bigl(|v|^2 + I^2\bigr) \bigl(\widehat f-\widehat M_\gamma[\rho_f]\bigr)(v,I) \,\sigma(dI)\,dv \ge 0.
			\]
			Then, there exists a constant $C_d > 0$ such that
			\[
			\widehat F \le C_d\Bigl(\widehat D + \sqrt{\rho_f^\gamma\widehat D}\Bigr).
			\]
		\end{proposition}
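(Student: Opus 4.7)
My plan is to exploit the explicit indicator structure of the extended equilibrium $\widehat M_\gamma[\rho_f]$ to reduce the estimate to a single mass-transport bound across the boundary of its support. Set $R^2 \coloneqq b_0\rho_f^{\gamma-1}$ so that $\widehat M_\gamma[\rho_f] = b_2\mathbf{1}_{A_-}$, where $A_- \coloneqq \{(v,I)\in\bbV : |v|^2+I^2 \le R^2\}$ and $A_+ \coloneqq \bbV\setminus A_-$. Since $0\le \widehat f\le b_2$, the difference $\widehat f - \widehat M_\gamma[\rho_f]$ is nonpositive on $A_-$ and nonnegative on $A_+$. Splitting the integrals defining $\widehat F$ and $\widehat D$ along this dichotomy and using the mass conservation identity $\int_\bbV \widehat f\,\sigma(dI)dv = \rho_f = \int_\bbV \widehat M_\gamma[\rho_f]\,\sigma(dI)dv$, I would first derive $\widehat F = \widehat D + 2D_-$ with
\[
D_- \coloneqq \int_{A_-}(|v|^2+I^2)(b_2-\widehat f)\,\sigma(dI)dv.
\]
Bounding $|v|^2+I^2 \le R^2$ on $A_-$ and invoking the balance $\int_{A_-}(b_2-\widehat f)\,\sigma(dI)dv = \int_{A_+}\widehat f\,\sigma(dI)dv \eqqcolon m_+$, I get $D_- \le R^2 m_+$. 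The proof therefore reduces to showing $R^2 m_+ \le C_d(\widehat D + \sqrt{\rho_f^\gamma\widehat D})$.

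For this, I would split $A_+$ at the threshold $(1+\delta)R^2$ for a parameter $\delta\in(0,1]$ to be optimized. Since $\widehat D = D_+' + D_-'$ with nonnegative pieces $D_+' \coloneqq \int_{A_+}(|v|^2+I^2-R^2)\widehat f\,\sigma(dI)dv$ and $D_-' \coloneqq \int_{A_-}(R^2-|v|^2-I^2)(b_2-\widehat f)\,\sigma(dI)dv$, on the far-field region $\{|v|^2+I^2 > (1+\delta)R^2\}$ the bound $R^2 \le (|v|^2+I^2-R^2)/\delta$ gives
\[
R^2\!\!\int_{A_+\cap\{|v|^2+I^2>(1+\delta)R^2\}}\!\!\widehat f\,\sigma(dI)dv \le \frac{1}{\delta}\int_{A_+}(|v|^2+I^2 - R^2)\widehat f\,\sigma(dI)dv \le \frac{\widehat D}{\delta}.
\]
On the near-field annulus $\{R^2 < |v|^2+I^2\le (1+\delta)R^2\}$, I would use $\widehat f \le b_2$ together with the volume identity $\int_{A_-}\sigma(dI)dv = \rho_f/b_2$, which follows from identifying $\sigma(dI)\,dv$ with Lebesgue measure on $\bbR^{n+d}$ (via $\sigma(dI) = b_1 I^{n-1}dI$ and $b_1$ being the surface measure of the unit $(n-1)$-sphere) combined with the critical scaling $\gamma = 1 + 2/(n+d)$. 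This yields
\[
R^2\!\!\int_{A_+\cap\{R^2<|v|^2+I^2\le(1+\delta)R^2\}}\!\!\widehat f\,\sigma(dI)dv \le b_0\rho_f^\gamma\bigl((1+\delta)^{(n+d)/2}-1\bigr) \le C_{n,d}\,\delta\,\rho_f^\gamma,
\]
the last inequality by the mean-value theorem applied on $\delta \in (0,1]$.

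Combining the two contributions gives $R^2 m_+ \le \widehat D/\delta + C_{n,d}\,\rho_f^\gamma\delta$. Choosing $\delta = \sqrt{\widehat D/(C_{n,d}\rho_f^\gamma)}$ when this quantity is at most $1$, and $\delta = 1$ otherwise, yields $R^2 m_+ \le C_d'(\widehat D + \sqrt{\rho_f^\gamma\widehat D})$, from which the claim $\widehat F \le \widehat D + 2R^2 m_+ \le C_d(\widehat D + \sqrt{\rho_f^\gamma\widehat D})$ follows. The delicate point is the near-field annulus bound, which relies crucially on the scaling $\gamma = 1+2/(n+d)$ to convert the geometric volume of a ball in the extended space into a scale-covariant multiple of $\rho_f$; the remaining ingredients are a one-parameter split followed by the standard AM-GM-type optimization in $\delta$.
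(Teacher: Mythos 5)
Your proof is correct, and it fills a gap the paper deliberately leaves: the paper does not prove this proposition but invokes it directly from Berthelin--Vasseur \cite[Proposition 4.6]{BV05}. Your argument is a self-contained reproduction of the intended reasoning. The key structural observations are all in place: $\widehat M_\gamma[\rho_f]=b_2\mathbf 1_{A_-}$ is a ``bathtub'' indicator, so the signed defect $\widehat f-\widehat M_\gamma[\rho_f]$ is of definite sign on each of $A_\pm$; mass conservation $\int\widehat f\,\sigma\,dv=\rho_f=b_2\,\mu(A_-)$ turns the negative-part defect into the excess mass $m_+$ outside the ball; and the near/far split at radius $(1+\delta)R^2$ trades off the good term $\widehat D/\delta$ against the geometric annulus volume $C_{n,d}\,\delta\,\rho_f^\gamma$, with the optimization in $\delta$ producing the $\widehat D+\sqrt{\rho_f^\gamma\widehat D}$ structure. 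The identity $\mu(A_-)=\rho_f/b_2$ and the scaling $R^{n+d}\propto\rho_f$ (via $\gamma-1=2/(n+d)$) are exactly what make the near-field term close; you flag this correctly as the delicate point.

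Two small remarks for completeness. First, when invoking the mean-value bound $(1+\delta)^{(n+d)/2}-1\le C_{n,d}\,\delta$ for $\delta\in(0,1]$, you are implicitly using $(n+d)/2\ge 1$ (so the derivative is monotone and bounded on $(1,2]$); this holds since $n\ge2$, but it is worth noting since $n$ need not be an integer and the ``$\bbR^{n+d}$'' identification is purely formal through the weight $b_1I^{n-1}\,dI$. Second, the constant you produce depends on $n$ (equivalently on $\gamma$) as well as $d$; the proposition's notation $C_d$ suppresses this, but for fixed $\gamma\in(1,1+\tfrac{2}{d+2}]$ as considered here this is immaterial. Neither affects the validity of the argument.
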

	
	As a consequence of Lemma~\ref{lem_energy}, \ref{lem:diss_bregman}, \ref{lem:diss} and Proposition~\ref{prop51}, we obtain a control of $\e^{-1}Q_\gamma[f^\e]$, which is fundamental in applying the velocity averaging lemma in the next section.
	
	\begin{corollary}\label{cor:diss}
		Let $f^\e$ be a solution to \eqref{main_kin} with sufficient regularity. Then,
		\[
			\sup_{\e>0}\left\{\left\|\frac{1}{\e}Q_\gamma[f^\e]\right\|_{L^2((0,T);L^{1+2/n}(\bbR^d\times\bbR^d))} + \frac{1}{\e}\int_0^T\!\!\!\intrr |v|^2\,|Q_\gamma[f^\e]|(t,z)\,dz dt\right\} <+\infty.
		\]
        In particular, we have for any compact set $K\subset\bbR^d$,
        \[
            \sup_{\e>0} \left\{ \frac{1}{\e}\int_0^T\!\!\!\iint_{K\times\bbR^d} (1+|v|)|Q_\gamma[f^\e](t,z)|\,dzdt \right\} <+\infty.
        \]
	\end{corollary}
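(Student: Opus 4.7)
The plan is to chain together the dissipation estimate from Lemma~\ref{lem_energy} with the successive lower bounds provided by Lemma~\ref{lem:diss_bregman}, Lemma~\ref{lem:diss}, and Proposition~\ref{prop51}. Combining the first two yields the master estimate
\[
	\frac{1}{\e^2}\int_0^T\!\!\intrr \mathfrak{b}_n(f^\e\,|\,M_\gamma[\rho^\e])\,dz\,dt \le \calE[f_0^\e],
\]
which is uniform in $\e$ by hypothesis. All subsequent bounds will be deduced from this one inequality together with the uniform bounds of Proposition~\ref{prop_uni}.

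For the $L^2((0,T); L^{1+2/n})$ bound on $\e^{-1}Q_\gamma[f^\e]$, I would apply Lemma~\ref{lem:diss} pointwise in $t$ with $B=\bbR^d$, $f=f^\e(t,\cdot)$, and $g=M_\gamma[\rho^\e(t,\cdot)]$. Setting $p=1+2/n$, the prefactor $\max\{\|f^\e(t)\|_{L^p}^{2-p},\|M_\gamma[\rho^\e(t)]\|_{L^p}^{2-p}\}$ is uniformly bounded in $(t,\e)$: the control of $\|f^\e\|_{L^p}$ comes directly from Proposition~\ref{prop_uni}, while $\|M_\gamma[\rho^\e]\|_{L^p}^p$ is proportional to $\|\rho^\e\|_{L^\gamma}^\gamma$ by the second part of Lemma~\ref{lem:minimization}. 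Integrating in $t$ and dividing by $\e^2$ then produces the first term in the stated bound from the master estimate.

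The $|v|^2$-weighted $L^1$ estimate is the main obstacle, and I would handle it via the extension map. Using $\intr |v|^2|Q_\gamma[f^\e]|\,dv \le \widehat F(t,x)$ and Proposition~\ref{prop51}, we have $\widehat F \le C_d(\widehat D + \sqrt{(\rho^\e)^\gamma\,\widehat D})$, where by \eqref{diss_eq} the pointwise dissipation satisfies $\int_0^T\!\intr \widehat D\,dx\,dt \le 2\e^2\calE[f_0^\e]$. The critical step is Cauchy--Schwarz on the square-root term,
\[
	\int_0^T\!\!\intr \sqrt{(\rho^\e)^\gamma\,\widehat D}\,dx\,dt \le \Bigl(\int_0^T\!\!\intr (\rho^\e)^\gamma\,dx\,dt\Bigr)^{1/2} \Bigl(\int_0^T\!\!\intr \widehat D\,dx\,dt\Bigr)^{1/2},
\]
where the first factor is controlled by $\sqrt{T}\|\rho^\e\|_{L^\infty(0,T;L^\gamma)}^{\gamma/2}$ (uniform in $\e$ by Proposition~\ref{prop_uni}) and the second factor is $O(\e)$. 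Both contributions to $\int_0^T\!\!\int \widehat F$ are therefore $O(\e)$, and dividing by $\e$ yields the second bound.

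For the ``in particular'' statement, I would split the velocity integration at the fixed radius $R=1$. Using $(1+|v|)\le 2|v|^2$ on $\{|v|>1\}$ and H\"older on $\{|v|\le 1\}$, one obtains
\[
	\iint_{K\times\bbR^d}(1+|v|)|Q_\gamma[f^\e]|\,dz \le 2|K\times B_1|^{1/p'}\,\|Q_\gamma[f^\e]\|_{L^p} + 2\iint_{\bbR^d\times\bbR^d}|v|^2\,|Q_\gamma[f^\e]|\,dz.
\]
Integrating in $t$, applying Cauchy--Schwarz in time on the first term, and dividing by $\e$, both resulting quantities are controlled by the two bounds already proved; compactness of $K$ ensures $|K\times B_1|<\infty$, completing the proof.
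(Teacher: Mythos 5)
Your proposal is correct and follows essentially the same route as the paper's proof: Lemma~\ref{lem_energy} combined with Lemmas~\ref{lem:diss_bregman} and \ref{lem:diss} for the $L^2(L^{1+2/n})$ bound, Proposition~\ref{prop51} plus Cauchy--Schwarz for the $|v|^2$-weighted term, and a split at $|v|=1$ with H\"older and Cauchy--Schwarz in time for the localized $(1+|v|)$ bound. The only cosmetic difference is that you track the $\max\{\|f^\e\|_{L^p}^{2-p},\|M_\gamma[\rho^\e]\|_{L^p}^{2-p}\}$ prefactor explicitly while the paper absorbs it into the constant.
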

\begin{proof}
	For the first estimate, we apply Lemmas \ref{lem_energy}, \ref{lem:diss_bregman} and \ref{lem:diss} to deduce
	\[
		\left\|\frac{1}{\e}Q_\gamma[f^\e]\right\|_{L^2(L^{1+2/n})}^2 \le \frac{C_n}{\e^2} \int_0^T\!\!\!\intrr \bigl(H[f^\e]-H[M_{\gamma}[\rho_{f^\e}]]\bigr)dz \le C_n\calE[f_0^\e].
	\]
	As for the second estimate, we observe that for almost every $(t,x)\in[0,T]\times\bbR^d$,
\begin{align*}
	F^\e(t,x) &\coloneqq \intr|v|^2|f^\e-M_\gamma[\rho^\e]|(t,x,v)\,dv \\
&\le \iint_{\bbV} \bigl(|v|^2+I^2\bigr)\bigl|\widehat f^\e - \widehat M_\gamma[\rho^\e]\bigr|(t,x,v,I) \,\sigma(dI)\,dv.
\end{align*}
Moreover, due to \eqref{diss_eq}, we have that
\begin{align*}
	D^\e(t,x) &\coloneqq \intr \lt(H_\gamma[f^\e] - H_\gamma[M_\gamma[\rho^\e]]\rt)(t,x,v)\, dv \\
	&= \frac12\iint_{\bbV} \bigl(|v|^2 + I^2\bigr) \bigl(\widehat f^\e -\widehat M_\gamma[\rho^\e]\bigr)(t,x,v,I) \,\sigma(dI)\,dv.
\end{align*}
By Proposition~\ref{prop51}, it then follows that
\[
	F^\e \le C_d\Bigl(D^\e + \sqrt{(\rho^\e)^\gamma D^\e}\Bigr).
\]
Consequently, we obtain
\begin{align*}
	\|F^\e\|_{L^1((0,T)\times\bbR^d)}
	\le C_d\Bigl(\|D^\e\|_{L^1((0,T)\times\bbR^d)}+ \sqrt{\|\rho^\e\|_{L^\gamma((0,T)\times\bbR^d)}^\gamma\|D^\e\|_{L^1((0,T)\times\bbR^d)}}\Bigl).
\end{align*}
Using Lemma \ref{lem_energy}, we deduce
\[
	\frac{1}{\e}\int_0^T\!\!\!\intrr |v|^2\,|Q_\gamma[f^\e]|(t,z)\,dz dt \le C_d\bigl(\e + \|\rho^\e\|_{L^\gamma((0,T)\times\bbR^d)}^{\gamma/2}\bigr).
\]
The minimization principle (cf.\ Lemma~\ref{lem:minimization}) and energy estimate (cf.\ Lemma~\ref{lem_energy}) then gives the required uniform esimate for $\|\rho^\e\|_{L^\gamma((0,T)\times\bbR^d)}$.

As for the final estimate, we simply compute to obtain
\begin{align*}
    \frac{1}{\e}\int_0^T\!\!\!\iint_{K\times\bbR^d} (1+|v|)|Q_\gamma[f^\e](t,z)|\,dzdt& \le 2\sqrt{T}|K\!\times\! B_1|^{\frac{1}{1+n/2}}\left\|\frac{1}{\e}Q_\gamma[f^\e]\right\|_{L^2(L^{1+2/n})} \\
    &\qquad+ \frac{1}{\e}\int_0^T\!\!\!\intrr |v|^2\,|Q_\gamma[f^\e]|(t,z)\,dz dt,
\end{align*}
therewith concluding the proof.
\end{proof}

%
%
%
%
%
%
\section{Strong compactness}\label{sec_str}
 		
In this section, we deduce the strong compactness of $\rho^\e$ by employing a velocity averaging lemma. In our context, this lemma takes the following form as an adaptation of the velocity averaging lemma found in \cite[Lemma 4.2]{MT07} (see also \cite[Appendix B]{MT07}), and its proof is postponed to Appendix~\ref{sec:velocity}.

	\begin{lemma}\label{lem:averaging}
		Let $p\in(1,+\infty)$ and $f,g_0,g_1\in L^p((0,T);L^p_{\textup{loc}}(\bbR^d\times\bbR^d))$ satisfy the kinetic equation
		\begin{equation}\label{eq:vel-avg}
			\e\pa_t f + v\cdot\nabla_x f = g_0+\nabla_v\cdot g_1.
		\end{equation}
		Then for any $\phi,\psi\in \calC^\infty_c(\R^d)$,
		\[
			(\rho_\psi\phi)(t,x) \coloneqq \intr f(t,x,v)\phi(x)\psi(v)\,dv\quad\in L^p((0,T); B^{(p-1)/2p}_{p,\infty}(\bbR^d)),
		\]
		satisfying the estimate
		\[
            \|\rho_\psi\phi\|_{L^p((0,T);B^{(p-1)/2p}_{p,\infty}(\bbR^d))} \le C\|\phi\|_{W^{1,\infty}}\|\psi\|_{L^1\cap W^{1,\infty}},
        \]
        where the constant $C>0$, depends on the $L^p$-norms $f$, $g_0$ and $g_0$ but is independent of $\phi$ and $\psi$.
	\end{lemma}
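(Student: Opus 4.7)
The plan is to follow the velocity-averaging strategy of DiPerna--Lions--Meyer, adapted to the Besov scale $B^{(p-1)/2p}_{p,\infty}$ as in Mellet--Tayeb \cite{MT07}. After multiplying \eqref{eq:vel-avg} by the cutoffs, set $F \coloneqq \phi\psi f$; this satisfies a kinetic equation of the same form with right-hand side $G_0 + \nabla_v \cdot G_1$, where $G_0, G_1$ are built from $\phi\psi g_0$, $\phi\psi g_1$, together with commutator terms of the form $(v\cdot\nabla_x\phi)\psi f$ and $\phi(\nabla_v\psi)g_1$. Each new source lies in $L^p((0,T)\times\bbR^{2d})$ with norm dominated by $\|\phi\|_{W^{1,\infty}}\|\psi\|_{L^1\cap W^{1,\infty}}$ times the local $L^p$ norms of $f,g_0,g_1$, and $(\rho_\psi\phi)(t,x) = \int F(t,x,v)\,dv$. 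Thus it suffices to prove the Besov bound for this global velocity average of $F$.

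\textbf{Fourier split and dyadic balance.} Next, I Fourier-transform in $(t,x)$, which yields the algebraic identity $i(\veps\tau + v\cdot\xi)\wht F = \wht{G_0} + \nabla_v \cdot \wht{G_1}$. For each dyadic frequency $|\xi|\sim N = 2^j$ and a free parameter $\delta>0$ to be optimized, I decompose $\int\wht F\,dv$ using a smooth $v$-cutoff $\chi\bigl(\delta^{-1}(\veps\tau+v\cdot\xi)\bigr)$. On the resonant slab $\{|\veps\tau+v\cdot\xi|\leq\delta\}$, which has $v$-measure of order $\delta/N$ after intersection with $\supp\psi$, H\"older in $v$ gives a contribution of order $(\delta/N)^{1-1/p}\|F\|_{L^p}$. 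On the non-resonant part, $i(\veps\tau+v\cdot\xi)$ is invertible with lower bound $\delta$; dividing and integrating by parts in $v$ transfers $\nabla_v$ off $\wht{G_1}$ at the cost of the kernel $\xi/(\veps\tau+v\cdot\xi)^2$, producing a term of order $N\delta^{-2}\|G_1\|_{L^p}$ (plus a lower-order $\delta^{-1}\|G_0\|_{L^p}$ term). Balancing these two contributions by a power-law choice $\delta = \delta(N)$ and summing via the equivalent characterization $\|u\|_{B^s_{p,\infty}}\sim \|u\|_{L^p}+\sup_j 2^{js}\|\Delta_j u\|_{L^p}$ gives precisely the claimed regularity index $s = (p-1)/(2p)$, while the zero-frequency piece is controlled by the trivial bound $\|\rho_\psi\phi\|_{L^p}\lesssim \|\phi\|_\infty\|\psi\|_{L^{p'}}\|f\|_{L^p}$.

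\textbf{Main obstacle.} The essential difficulty lies not in the symbolic calculation sketched above, but in converting the pointwise-in-$(\tau,\xi)$ bounds on the symbols into honest $L^p_{t,x}$ operator norms when $p\neq 2$. For $p=2$, Plancherel delivers the required bounds immediately; for general $p\in(1,\infty)$, one must establish endpoint weak-type $L^1\to L^{1,\infty}$ and $L^\infty\to L^\infty$ estimates for the pseudo-multipliers arising from integration by parts in $v$, then apply Marcinkiewicz real interpolation, exactly as carried out in \cite[Appendix B]{MT07}. A secondary technical point is verifying the $\veps$-uniformity of the constant: the change of variables $\tau\mapsto \veps\tau$ leaves the relative sizes of $\veps\tau+v\cdot\xi$ and $\delta$ invariant after rescaling the temporal $L^p$ norm, so every constant can be traced back to the $\veps$-independent $L^p$ norms of $f,g_0,g_1$, which is precisely what the subsequent compactness argument requires.
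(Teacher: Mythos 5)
Your localization step is identical to the paper's: multiply the equation by a cutoff $\widetilde\varphi\in\calC_c^\infty(\bbR^d\times\bbR^d)$ and absorb the commutator terms $(v\cdot\nabla_x\widetilde\varphi)f$ and $\nabla_v\widetilde\varphi\cdot g_1$ into modified sources, reducing to the case where $f,g_0,g_1$ are globally in $L^p$. After that, however, you and the paper diverge on the key technical step. You propose the classical Fourier-side decomposition in the style of DiPerna--Lions--Meyer: split the symbol $i(\e\tau+v\cdot\xi)$ into a resonant slab and a non-resonant region, integrate by parts in $v$ to transfer $\nabla_v$ off $\wht{G_1}$, balance dyadically, and then defer the hard $L^p$ operator bounds for $p\neq 2$ to weak-type endpoints plus Marcinkiewicz interpolation as in \cite[Appendix~B]{MT07}. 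You correctly identify that converting pointwise symbol bounds into $L^p_{t,x}$ operator norms is the real difficulty when $p\neq 2$.

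The paper takes a genuinely different route. After adding $\lambda f$ to both sides, it writes $\rho_\psi = \lambda T_\lambda f + T_\lambda(g_0+\nabla_v\cdot g_1)$ where
\[
(T_\lambda h)(t,x) = \int_0^\infty\!\!\!\intr h(t-\e\tau,x-\tau v,v)\,e^{-\lambda\tau}\psi(v)\,dv\,d\tau,
\]
and invokes Jabin--Vega's \emph{real-space} averaging estimates (\cite[Proposition~3.1]{JV04}) for $T_\lambda$ in $L^p_t\dot W^{1-1/p,p}_x$ and $L^p_t\dot W^{-1/p,p}_x$, with no Fourier transform in the core step. The Besov index $(p-1)/2p$ then falls out of the $K$-method for real interpolation with the explicit choice $\lambda=h^{1/2}$. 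The payoff of this route is that the delicate $L^p$ Fourier-multiplier theory you flag as the ``main obstacle'' is packaged inside the cited proposition, and the $\e$-uniformity is transparent since $\e$ appears only inside the argument of $h$ along characteristics and disappears after a change of variable in time.

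One caution on the details of your dyadic sketch: the $L^{p'}_v$ norm of the kernel $\xi/(\e\tau+v\cdot\xi)^2$ on $\{|\e\tau+v\cdot\xi|\geq\delta\}\cap\supp\psi$ is of order $N^{1/p}\delta^{-(2-1/p')}$, not $N\delta^{-2}$. Taking your crude bound literally, the balance $(\delta/N)^{1/p'}=N\delta^{-2}$ gives $\delta = N^{(1+1/p')/(2+1/p')}$ and the regularity index $s=(p-1)/(3p-1)$, which is strictly smaller than the target $(p-1)/2p$ for all $p>1$. With the sharper kernel norm the balance gives the clean choice $\delta=N^{1/2}$ and the correct exponent $s=(p-1)/2p$. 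Since you defer the rigorous estimates to \cite[Appendix~B]{MT07} this is a sketch imprecision rather than a fatal gap, but if you were to write the argument out you would need the refined kernel estimate, and it is worth knowing that the paper sidesteps this bookkeeping entirely via \cite{JV04}.
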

		
	Notice that $f^\e$ satisfies the equation \eqref{eq:vel-avg} with
		\begin{equation*}
			g_0^\e = \frac{1}{\e}Q_\gamma[f^\e],\qquad g_1^\e  = \sfF[\rho^\e]f^\e.
		\end{equation*}
		Therefore, to apply Lemma~\ref{lem:averaging}, we will need to provide uniform bounds for $g_0^\e$ and $g_1^\e$.
		
		\begin{lemma}\label{lem:uniform-g}
			Let $f^\e$ be a weak entropy solution to \eqref{main_kin} and $n \geq 2$. Suppose that $V$ and $K$ satisfy the assumptions  ${\bf (HV)}$ and ${\bf (HK)}$, respectively. Then,
			\[
				\sup_{\e>0} \left\|\frac{1}{\e}Q_\gamma[f^\e]\right\|_{L^p((0,T);L^p_{loc}(\bbR^d \times \bbR^d))} <+\infty \qquad \text{for every } p \in \lt[1, 1+\frac 2n\rt]
			\]
			and  
\[
				\sup_{\e>0} \|\sfF[\rho^\e]f^\e\|_{L^\ell([0,T];L^\ell_{loc}(\bbR^d \times \bbR^d))} <+\infty  \quad \mbox{for} \quad \frac1\ell = \frac1{1+\frac2n} + \frac1r \in \lt[\frac1{1+\frac2n}, \ 1 \rt).
\]
		\end{lemma}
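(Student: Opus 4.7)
The plan is to establish the two uniform bounds separately, each deriving from the a priori estimates gathered in Section~\ref{sec_uni}. For the first estimate, the key input is Corollary~\ref{cor:diss}, which already provides a uniform bound for $\e^{-1}Q_\gamma[f^\e]$ in $L^2((0,T);L^{1+2/n}(\bbR^d\times\bbR^d))$. Since $n\ge 2$ gives $1+2/n\le 2$, Hölder's inequality in time on the bounded interval $[0,T]$ promotes this to a uniform bound in $L^{1+2/n}((0,T);L^{1+2/n}(\bbR^d\times\bbR^d))$. Then, on any compact $K\subset\bbR^d\times\bbR^d$ and for any $p\in[1,1+2/n]$, another Hölder application on the finite-measure set $[0,T]\times K$ yields the claim.

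For the second estimate, I will first control $\sfF[\rho^\e]$ uniformly in $L^\infty([0,T];L^r_{loc}(\bbR^d))$. By {\bf (HV)}, $\nabla V\in L^\infty_{loc}$, so the real work lies in $\nabla K*\rho^\e$. Using a cutoff $\chi\in\calC_c^\infty(\bbR^d)$ with $\chi\equiv 1$ on $B_R$ and $\supp\chi\subset B_{2R}$, split $\nabla K = \chi\nabla K + (1-\chi)\nabla K$, where {\bf (HK)} yields $\chi\nabla K\in L^q(\bbR^d)$ and $(1-\chi)\nabla K\in L^r(\bbR^d)$. The far piece is handled by Young's inequality with $\rho^\e\in L^1$, giving $\|((1-\chi)\nabla K)*\rho^\e\|_{L^r}\le C$. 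For the near piece, I apply Young's inequality with $\rho^\e\in L^{p_3}$ for a suitable $p_3\in[1,\gamma]$ (using the uniform $L^1\cap L^\gamma$ bound of Proposition~\ref{prop_uni} and interpolation) chosen so that the output exponent $s$ satisfies $\tfrac1s=\tfrac1q+\tfrac1{p_3}-1\in[0,\tfrac1r]$. The existence of such $p_3$ is precisely encoded by \eqref{condi_hk2}; a finite-measure argument on compact sets then converts the resulting $L^s$ estimate to the desired $L^r_{loc}$ bound.

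The bound on $\sfF[\rho^\e]f^\e$ then follows by Hölder's inequality with $\tfrac1\ell=\tfrac1r+\tfrac1{1+2/n}$: on any compact $K_x\times K_v\subset\bbR^d\times\bbR^d$,
\[
  \|\sfF[\rho^\e]f^\e\|_{L^\ell(K_x\times K_v)}\le |K_v|^{1/r}\|\sfF[\rho^\e]\|_{L^r(K_x)}\|f^\e\|_{L^{1+2/n}(K_x\times K_v)},
\]
and integration in time combined with the uniform $L^\infty_t L^{1+2/n}$ bound on $f^\e$ from Proposition~\ref{prop_uni} finishes the proof. The main technical obstacle will be the near-field analysis of $\nabla K*\rho^\e$: choosing $(p_3,s)$ so as to simultaneously respect the admissibility of Young's inequality and land on an exponent compatible with $r$ is precisely where the structural condition~\eqref{condi_hk2} on $(p,q,r,\gamma)$ enters in an essential way.
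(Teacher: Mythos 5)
Your proposal is correct and follows essentially the same route as the paper: the first bound descends from the $L^2(L^{1+2/n})$ control in Corollary~\ref{cor:diss} via monotonicity of $L^p$-norms on bounded domains, and the second combines the $L^\infty_{loc}$ bound on $\nabla V$ with a near/far splitting of $\nabla K$ and Young's convolution inequality against $\rho^\e\in L^1\cap L^\gamma$, followed by Hölder to pair with $f^\e\in L^{1+2/n}$. The only cosmetic difference is that you use a smooth cutoff $\chi$ and allow the near-field output exponent to exceed $r$ before localizing, whereas the paper uses a sharp indicator on $B_R$ and picks $s\in[1,\gamma]$ so that Young's inequality lands exactly in $L^r$; both are justified by the same structural condition~\eqref{condi_hk2}.
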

		\begin{proof}
			The first estimate can be easily deduced from Corollary~\ref{cor:diss} by using the monotonicity of the $L^p$-norm on bounded domains.
			
			For the potential $V \in W^{1,\infty}_{loc}(\bbR^d)$, we deduce for all $\ell\in(1,1+2/n]$ that
			\[
				\sup_{\e>0}\| \nabla V f^\e\|_{L^\ell([0,T];L^\ell_{loc}(\bbR^d \times \bbR^d))} \leq \|\nabla V\|_{L^\infty_{loc}(\bbR^d)}\sup_{\e>0}\|f^\e\|_{L^\ell([0,T];L^\ell_{loc}(\bbR^d \times \bbR^d))} <+\infty.
			\]
			For the term with the interaction potential $K$, we use \eqref{condi_hk2} to find $\ell > 1$ and $s \in [1,\gamma]$ satisfying
			\[
			\frac1\ell = \frac1{1+\frac2n} + \frac1r \quad \mbox{and} \quad 1 + \frac1r = \frac1q + \frac1s.
			\]
			Then, we apply H\"older's inequality and Young's convolution inequality to deduce
			\begin{align*}
		\|(\nabla K*\rho^\e)f^\e\|_{L^\ell_{loc}} &\leq C\|\nabla K * \rho^\e\|_{L^r}\|f^\e\|_{L^{1+2/n}} \cr
		&\leq C\lt( \|\nabla K\|_{L^q(B_R)}\|\rho^\e\|_{L^s} + \|\nabla K\|_{L^r(\bbR^d \setminus B_R)}\|\rho^\e\|_{L^1} \rt)\|f^\e\|_{L^{1+2/n}} <+ \infty
\end{align*}
uniformly in $\e>0$ due to ${\bf (HK)}$. 	
	\end{proof}
		
In conjunction with an Aubin--Lions-type lemma \cite{Au63,Li69,RoSa03,Si87}, we establish a strong compactness result for the sequence $\{\rho^\e\}_{\e>0}$ in the following lemma.
 
\begin{lemma}
	Let $f^\e$ be a weak entropy solution to \eqref{main_kin}. Suppose $V$ and $K$ satisfy the assumptions {\bf (HV)} and {\bf (HK)}, respectively. Then the family
    \[
        \{\rho^\e\}_{\e>0}\;\;\text{is relatively compact in $L^q((0,T)\times\bbR^d)$,\; $q\in[1,\gamma)$,}
    \]
    i.e., there exists $\rho\in L^q((0,T)\times\bbR^d)$ and a (not relabelled) subsequence of $\{\rho^\e\}_{\e>0}$ such that
	\[
		\rho^\e \to \rho\quad\text{in $L^q((0,T)\times\bbR^d)$}.
	\]
	Moreover, $\rho\in \calC([0,T], W^{-1,1}(\bbR^d))$ and
	\[
		\rho^\e(t,\cdot)\,\calL^d \rightharpoonup \rho(t,\cdot)\,\calL^d\quad\text{narrowly in $\calP(\bbR^d)$}\quad\text{for every $t\in[0,T]$,}
	\]
	where $\calL^d$ is the $d$-dimensional Lebesgue measure on $\bbR^d$.
\end{lemma}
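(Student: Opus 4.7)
The plan is to combine the velocity averaging lemma (Lemma~\ref{lem:averaging}) with the uniform bounds of Section~\ref{sec_uni}, and to extract the missing time equicontinuity via a cancellation argument in the moment equations \eqref{eq:moments}. Writing \eqref{main_kin} in the form required by Lemma~\ref{lem:averaging}, namely $\e\pa_t f^\e + v\cdot\nabla_x f^\e = g_0^\e + \nabla_v\cdot g_1^\e$ with $g_0^\e = \e^{-1}Q_\gamma[f^\e]$ and $g_1^\e = \sfF[\rho^\e]f^\e$, Lemma~\ref{lem:uniform-g} supplies uniform bounds of $f^\e$, $g_0^\e$, and $g_1^\e$ in $L^\ell((0,T); L^\ell_{\textup{loc}}(\bbR^d\times\bbR^d))$ for some $\ell\in(1, 1+2/n]$. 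For cutoffs $\phi\in\calC_c^\infty(\bbR^d)$ and $\psi_R\in\calC_c^\infty(\bbR^d)$ with $\psi_R\equiv 1$ on $B_R$ and supported in $B_{2R}$, Lemma~\ref{lem:averaging} then produces a uniform-in-$\e$ bound of the $v$-truncated density $\phi(x)\intr f^\e(t,x,v)\psi_R(v)\,dv$ in $L^\ell((0,T); B^{(\ell-1)/(2\ell)}_{\ell,\infty}(\bbR^d))$. Since $B^s_{\ell,\infty}(\bbR^d)\hookrightarrow\hookrightarrow L^\ell_{loc}(\bbR^d)$ for $s>0$, this yields uniform spatial equicontinuity of the truncated densities.

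To obtain the missing time equicontinuity, I would substitute the expression $\e^{-1}m^\e = -\e\pa_t m^\e - \nabla\cdot\intr v\otimes v\, f^\e\,dv - \rho^\e\sfF[\rho^\e]$ obtained from the second equation of \eqref{eq:moments} into the first, yielding
\[
    \pa_t\bigl(\rho^\e - \e\nabla\cdot m^\e\bigr) = \nabla_x\cdot\nabla_x\cdot\!\!\intr v\otimes v\,f^\e\,dv + \nabla_x\cdot\bigl(\rho^\e\sfF[\rho^\e]\bigr).
\]
By Proposition~\ref{prop_uni} and assumption {\bf (HK)}, the right-hand side is uniformly bounded in $L^\infty((0,T); W^{-2,1}_{loc}(\bbR^d))$, while $\e\nabla\cdot m^\e\to 0$ uniformly in time in $W^{-1,p}_{loc}$ due to the uniform bound on $\|m^\e\|_{L^\infty([0,T]; L^p)}$. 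Hence $\{\rho^\e\}_{\e>0}$ is $\e$-uniformly equicontinuous in time with values in $W^{-2,1}_{loc}(\bbR^d)$, and an analogous argument applies to the $v$-truncated densities.

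An Aubin--Lions--Simon argument (equivalently, the Fr\'echet--Kolmogorov--Riesz criterion) then yields relative compactness of the truncated densities in $L^\ell((0,T)\times K)$ for each compact $K\subset\bbR^d$. The velocity tails are absorbed uniformly via $\int_{|v|\ge R}f^\e\,dv\le R^{-2}\intr |v|^2 f^\e\,dv$ (Lemma~\ref{lem_energy}), and spatial tightness is supplied by Lemma~\ref{lem_2m}. A diagonal extraction across enlarging $R$ and $K$ produces a subsequence with $\rho^\e\to\rho$ in $L^1((0,T)\times\bbR^d)$; interpolation with the uniform $L^\infty((0,T); L^\gamma)$ bound from Proposition~\ref{prop_uni} then upgrades this to $\rho^\e\to\rho$ in $L^q((0,T)\times\bbR^d)$ for every $q\in[1,\gamma)$. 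The time equicontinuity combined with spatial tightness yields, via Arzel\`a--Ascoli, a continuous representative of the limit and narrow convergence $\rho^\e(t,\cdot)\calL^d\rightharpoonup\rho(t,\cdot)\calL^d$ at every $t\in[0,T]$; the sharper statement $\rho\in\calC([0,T]; W^{-1,1}(\bbR^d))$ follows once the limit PDE \eqref{main_eq} is identified and $\nabla\rho^\gamma + \rho\sfF[\rho]\in L^1_{loc}$ is established.

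The main obstacle is the absence of a direct $\e$-uniform bound on $\pa_t\rho^\e$: the continuity equation $\e\pa_t\rho^\e = -\nabla\cdot m^\e$ yields a $1/\e$ singular estimate, incompatible with any Aubin--Lions framework. Resolving this requires the cancellation trick above, which trades the $1/\e$ factor for one extra spatial derivative, producing $W^{-2,1}$-equicontinuity in time in place of the expected $W^{-1,1}$. Once this cancellation is exploited, the velocity averaging lemma supplies the requisite spatial regularity, and standard tightness and interpolation arguments complete the argument.
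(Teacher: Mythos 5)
Your Step~1 (velocity averaging plus careful removal of the spatial and velocity cutoffs using the second-moment bounds) and your tightness/interpolation steps are essentially the paper's Steps~1 and~3. Where you diverge is the time equicontinuity, and there your route is genuinely different from — and weaker than — the paper's. The paper does not pass through the momentum equation at all. Instead, it exploits the zero $v$-mean of the local equilibrium, $\intr v\,M_\gamma[\rho^\e]\,dv=0$, to write $m^\e=\intr v f^\e\,dv=-\intr v\,Q_\gamma[f^\e]\,dv$, so that $\frac{1}{\e}\nabla\phi\cdot m^\e$ is controlled by $\frac{1}{\e}\iint_{K\times\bbR^d}|v|\,|Q_\gamma[f^\e]|\,dz$. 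Corollary~\ref{cor:diss} (coming from the Bregman-divergence dissipation estimate) says exactly this quantity is bounded in $L^1((0,T))$ uniformly in $\e$. This yields $\{\rho^\e\}$ bounded in $W^{1,1}((0,T);W^{-1,1}(\bbR^d))$ — no loss of an extra spatial derivative, no correction term, and the claimed $\calC([0,T];W^{-1,1})$ regularity and pointwise-in-time narrow convergence follow immediately by the generalized Arzel\`a--Ascoli theorem.

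Your Chapman--Enskog-type substitution is structurally sound as an idea, but it has concrete gaps here. (i) You assert $\nabla_x\cdot(\rho^\e\sfF[\rho^\e])$ is uniformly bounded in $L^\infty((0,T);W^{-2,1}_{loc})$, which requires $\rho^\e\,(\nabla K\ast\rho^\e)\in L^1_{loc}$. By H\"older this needs $\frac{1}{r}\le 1-\frac{1}{\gamma}=\frac{\gamma-1}{\gamma}$, but {\bf(HK)} only imposes $\frac{1}{r}<\min\bigl\{\frac{2}{n+2},\frac{2(\gamma-1)}{\gamma}\bigr\}$, and both of those exceed $\frac{\gamma-1}{\gamma}$. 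The bound in Lemma~\ref{lem:uniform-g} is for the phase-space quantity $\sfF[\rho^\e]f^\e$ in $L^\ell_{loc}$, which does not give the macroscopic product $\sfF[\rho^\e]\rho^\e$ in $L^1_{loc}$. (ii) Your time regularity is for $h^\e:=\rho^\e-\e\nabla\cdot m^\e$ while your Besov bound from velocity averaging is for $\rho^\e$; neither sequence satisfies both Aubin--Lions hypotheses, so you need a perturbed Aubin--Lions statement that you haven't set up. (iii) Even granting the above, $W^{-2,1}$ time regularity does not yield $\rho\in\calC([0,T];W^{-1,1})$; your proposed fix via the limit PDE needs $\nabla\rho^\gamma+\rho\sfF[\rho]\in L^1_{loc}$, i.e., $\rho^\gamma\in W^{1,1}_{loc}$, which is strictly more regularity than the paper establishes (the limit equation is only distributional, with $\Delta$ on the test function). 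The paper's observation $\intr v\,M_\gamma\,dv=0$ combined with Corollary~\ref{cor:diss} is the key mechanism that resolves all three issues at once, and it is what your proof is missing.
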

\begin{proof}
    The proof will be performed in three steps. In the first step, we show that the density $\rho^\e$ belongs to appropriate Besov space by using Lemma \ref{lem:averaging} and the second-moment bounds of $f^\e$ in Section \ref{sec_uni}. Then, in the second step, we will show that $\{\rho^\e\}_{\e>0}$ is bounded in $W^{1,1}((0,T);W^{-1,1}(\R^d))$. Finally, we obtain the compactness of $\{\rho^\e\}_{\e>0}$ in $L^q((0,T)\times\R^d)$ for $1\le q<\gamma$ using an Aubin--Lions-type lemma and the interpolation between $L^1$ and $L^\gamma$.
    
    \medskip
    \noindent{\em Step 1}. We deduce from Proposition~\ref{prop_uni} and Lemma~\ref{lem:uniform-g} that the sequences $\{f^\e\}_{\e>0}$, $\{g_0^\e\}_{\e>0}$, $\{g_1^\e\}_{\e>0}$ are bounded in $L_{loc}^p((0,T)\times\bbR^d\times\bbR^d)$ for some $p>1$. Hence, we deduce from Lemma~\ref{lem:averaging} that, for any $\phi,\psi\in \calC_c^\infty(\bbR^d)$, the family
    \[
        \{\phi\rho^\e_\psi\}_{\e>0} \subset L^p((0,T); B^{(p-1)/2p}_{p,\infty}(\bbR^d))\quad\text{is bounded,}
    \]
    with $B^{(p-1)/2p}_{p,\infty}(\bbR^d)\hookrightarrow L^p(\bbR^d)$ being a compact embedding. 
    
    We first focus on removing $\phi$. To this end, we choose a sequence of smooth cutoff radial function $\phi_k$ such that $\phi_k(x)=1$ for $|x|\le k$ and $\phi_k(x)=0$ for $|x|\ge 2k$, satisfying additionally $\|\nabla\phi_k\|_{L^\infty}\le 2k^{-1}$ for all $k\ge 1$. We start by splitting $\Delta_h \rho^\e_\psi$ and estimating
    \[\|\Delta_h\rho^\e_\psi\|_{L_x^p} \leq\|\Delta_h (\phi_k\rho^\e_\psi)\|_{L_x^p}+\|\Delta_h[(1-\phi_k)\rho^\e_\psi]\|_{L_x^p}.\]
    Since the estimate of Lemma \ref{lem:averaging} depends linearly on $\|\phi_k\|_{W^{1,\infty}}$, there exists $C(t)\in L^p((0,T))$, independent of $k\ge 1$, such that
    \[
    \|\Delta_h(\phi_k\rho^\e_\psi)\|_{L_x^p}\le C(t) |h|^{\frac{p-1}{2p}}\|\psi\|_{L^1\cap W^{1,\infty}}.
    \]
    Furthermore, since
    \[\Delta_h(fg)=(\Delta_hf)g(\cdot+h)-f\Delta_hg,\]
    and $\|\Delta_h\phi_k\|_{L^\infty}\le \|\nabla\phi_k\|_{L^\infty}|h|\le 2k^{-1}|h|$, we control the second term as
    \begin{align*}
        \|\Delta_h[(1-\phi_k)\rho^\e_\psi]\|_{L^p}&=\|\Delta_h\phi_k \rho^\e_\psi(\cdot+h)\|_{L_x^p}+\|(1-\phi_k)\Delta_h \rho^\e_\psi\|_{L_x^p}\\
        &\le 2|h|k^{-1}\|\rho^\e_\psi\|_{L_x^p}+\|(1-\phi_k)\Delta_h\rho^\e_\psi\|_{L_x^p}.
    \end{align*}
    Finally, we estimate the second term on the right-hand side. The estimate of this term is essentially the same as controlling the term
    \[
    	\int_{|x|\ge k}(\rho^\e)^p\,dx.
    \]
    Since, for some $\alpha\in(0,2)$, Young's inequality implies
    \[
    	\int_{|x|\ge k}(\rho^\e)^p\,dx\le \frac{1}{k^{\alpha}}\int_{\R^d}|x|^{\alpha}(\rho^\e)^p\,dx\le\frac{1}{k^{\alpha}}\int_{\R^d}\left(|x|^2\rho^\e + (\rho^\e)^{(p-\frac{\alpha}{2})\frac{2}{2-\alpha}}\right)\,dx,
    \]
    we may choose $p<\gamma$ and $\alpha$ sufficiently small such that
    \[
    	0<\alpha\le \frac{2(\gamma-p)}{\gamma-1}\quad\Longrightarrow\quad 1\le \left(p-\frac{\alpha}{2}\right)\frac{2}{2-\alpha}\le \gamma.
    \]
    This, together with the second moment bound of $f^\e$ in $x$-variable, implies
    \[\|(1-\phi_k)\Delta_h\rho^\e_\psi\|_{L_x^p}\le Ck^{-\alpha/p}.\]
    Gathering all the estimates and letting $k\to\infty$, we arrive at the estimate
    \[
        \|\Delta_h\rho^\e_\psi\|_{L_x^p} \le C(t)|h|^{\frac{p-1}{2p}}\|\psi\|_{L^1\cap W^{1,\infty}}\qquad\text{for any $h>0$.}
    \]
    
    Now, for each fixed $h>0$, we again consider a smooth radially symmetric cutoff function $\psi_k$ such that $|\psi_k(v)|=1$ for $|v|\le k$ and $|\psi_k(v)|=0$ for $|v|\ge k+1$. Then, thanks to the geometric structure of $\psi_k$, we easily deduce that for sufficiently large $k\gg 1$,
    \[\|\psi_k\|_{L^1 \cap W^{1,\infty}}\le Ck^d,\]
    where $C>0$ is independent of $k$.
    Now, for some $s>0$, 
    \begin{align*}
    |h|^{-s}\|\Delta_h \rho^\e\|_{L^p} &\le |h|^{-s}\|\Delta_h\rho^\e_{\psi_k}\|_{L^p}+|h|^{-s}\left\|\int_{\R^d}(f^\e(x+h,v)-f^\e(x,v))(1-\psi_k(v))\,dv\right\|_{L^p}\\
    &\le C(t)|h|^{-s}|h|^{\frac{p-1}{2p}}\|\psi_k\|_{L^1 \cap W^{1,\infty}} + |h|^{-s} k^{-1-\alpha}\\
    &\le C(t)|h|^{-s+\frac{p-1}{2p}}k^d+|h|^{-s} k^{-1-\alpha}
    \end{align*}
    for some $\alpha\in(0,1)$, where we use the boundedness of the second velocity moment of $f^\e$. We now choose $\beta>0$ and $s>0$ according to
    \[
       \beta = \frac{p-1}{2p}\frac{1}{(1+\alpha+d)},\qquad s = \beta(1+\alpha),
    \]
    and take $k=h^{-\beta}$ to obtain
    \[
        |h|^{-s}\|\Delta_h \rho^\e\|_{L^p}\le C(t)|h|^{-s+\frac{p-1}{2p}-d\beta}+|h|^{-s+\beta(1+\alpha)}= C(t)+1.\]
    Therefore, we conclude that
    \[
        \sup_{h\ne 0}|h|^{-s}\|\Delta_h\rho^\e\|_{L^p}\in L^p((0,T)),
    \]
    that is, $\rho^\e$ is bounded in $L^p(0,T; B^s_{p,\infty}(\R^d))$ for the above choice of $s$. 
    
    Finally, using the second moment bound, we obtain
    \begin{align*}
		\|\Delta_h\rho^\e\|_{L^1} &\le |B_R|^{1-1/p}\|\Delta_h\rho^\e\|_{L^p} + \int_{B_R^c} |\Delta_h\rho^\e(x)|\,dx \le C(t)R^{d(1-1/p)}|h|^{s} + CR^{-2}\\
        &\le C(t)|h|^{2s/(d(1-1/p)+2)},
    \end{align*}
    where we use optimal choice $R=|h|^{-s/(d(1-1/p)+2)}$ in the last inequality. Therefore, we conclude that $\{\rho^\e\}_{\e>0}$ is bounded in $L^p((0,T);B^{s'}_{1,\infty}(\R^d))$ for $s'=2s/(d(1-1/p)+2)$.
    
    \medskip
    \noindent{\em Step 2}. We now show that $\{\rho^\e\}_{\e>0}\subset W^{1,1}((0,T);W^{-1,1}(\bbR^d))$ is bounded. For any $\varphi\in \calC_c^\infty((0,T)\times\bbR^d)$, we have that $\rho^\e$ satisfies
	\[
		\int_0^T\!\!\!\intr \e\partial_t\varphi\,\rho^\e + \nabla\varphi\cdot m^\e\,dx dt = 0,\qquad m^\e(t,x)\coloneqq \intr v\,f^\e(t,x,v)\,dv. 
	\]
	Taking $\varphi(t,x) = \chi(t) \phi(x)$ with $\chi\in \calC_c^\infty((0,T))$ and $\phi\in \calC_c^\infty(\bbR^d)$, we find
	\[
		\int_0^T \!\! \dot\chi(t)\intr\rho^\e(t,x)\,\phi(x)\,dx dt = -\frac{1}{\e}\int_0^T\!\!\chi(t)\!\int_K \nabla\phi(x)\cdot m^\e(t,x)\,dx dt.
	\]
	From the zero $v$-mean property of $M_\gamma[\rho^\e]$, we deduce the estimate
	\begin{align*}
		\left\|\frac{1}{\e}\nabla\phi\cdot m^\e\right\|_{L^1} &= \int_0^T\!\!\!\intr \frac{1}{\e}\left|\nabla\phi(x)\cdot\intr v\,f^\e(t,x,v)\,dv\right|dxdt \\
		&= \int_0^T\!\!\!\intr \frac{1}{\e}\left|\nabla\phi(x)\cdot\intr v\,Q_\gamma[f^\e](t,x,v)\,dv\right|dxdt \\
		&\le \|\nabla\phi\|_{L^\infty}\frac{1}{\e}\int_0^T\!\!\!\iint_{(\text{supp}\, \phi)\times\bbR^d} |v||Q_\gamma[f^\e](t,z)| dzdt.
	\end{align*}
    Corollary~\ref{cor:diss}, then implies that 
	\[
		\left\{\frac{1}{\e}\nabla\phi\cdot m^\e\right\}_{\e>0}\subset L^1((0,T)\times \bbR^d).
	\]
	Consequently, we have that the sequence $
		\{t\mapsto \langle\phi,\rho^\e(t,\cdot)\rangle\}_{\e>0}$ is bounded in $W^{1,1}((0,T))$ for every $\phi\in W^{1,\infty}_0(\bbR^d)$, implying that $\{\rho^\e\}_{\e>0}$ is bounded in $W^{1,1}((0,T);W^{-1,1}(\bbR^d))$. Together with the uniform-bound $\|\rho^\e(t,\cdot)\|_{L^1}\le 1$, we obtain the pointwise-in-time narrow convergence
	\[
		\rho^\e(t,\cdot)\,\calL^d \rightharpoonup \rho(t,\cdot)\,\calL^d\quad\text{narrowly in $\calP(\bbR^d)$}\quad\text{for every $t\in[0,T]$,}
	\]
	hold by applying a generalized Arzel\'{a}--Ascoli theorem \cite{AGS08}.
		
	\medskip
	\noindent{\em Step 3}. Using the fact that $B^{s'}_{1,\infty}(\R^d)$ embeds compactly into $L^1(\bbR^d)$, we combine the results of Steps 1 \& 2 to use an Aubin-Lions-type lemma \cite{Si87} to conclude that $\{\rho^\e\}_{\e>0}$ is relatively compact in $L^1((0,T)\times\R^d)$. Finally, the uniform boundedness of $\rho^\e$ in $L^\infty((0,T);L^\gamma(\R^d))$ and the interpolation inequality yield the desired compactness of $\rho^\e$ in $L^q((0,T)\times\R^d)$ for all $1\le q<\gamma$.
\end{proof}
	 		
		%
		%
		%
		%
		%
		%
\section{Passing to the limit $\e\to 0$: Proof of Theorem~\ref{thm_main}}\label{sec_pf}
		
This section provides a detailed proof of Theorem \ref{thm_main}, outlining the weak formulations of the equations and the convergence analysis as $\e \to 0$.
	%
	%
	%
	%
	%
	%
\subsection{Weak formulations}
We begin by recalling the moment equations for $f^\e$ given in \eqref{eq:moments}, which are written as
		$$\begin{aligned}
			\pa_t \rho^\e + \frac{1}{\e}\nabla \cdot  m^\e &= 0,\cr
			\e\pa_t m^\e + \nabla \cdot \intr v \otimes v (f^\e - M_\gamma[\rho^\e])\,dv  &= -\frac{1}{\e} m^\e - \nabla (\rho^\e)^\gamma - \rho^\e \sfF[\rho^\e],
		\end{aligned}$$
		where we used the identity
		\[
		\nabla \cdot \intr v \otimes v M_\gamma[\rho^\e]\,dv = \nabla (\rho^\e)^\gamma.
		\]
		
For any test functions $\varphi\in \calC^\infty_c([0,T)\times\bbR^d)$, $\xi\in \calC^\infty_c([0,T)\times\bbR^d;\bbR^d)$, their weak formulations are
		\[
			\intr \varphi(0,x)\rho^\e_0(x)\,dx + \int_0^T \!\!\!\intr \left(\partial_t\varphi\,\rho^\e + \frac{1}{\e}\nabla\varphi\cdot m^\e\right) dxdt = 0
			\]
			and
					\begin{align*}
			&\intr \e\,\xi(0,x) m^\e_0(x)\,dx + \int_0^T \!\!\!\intr \e\,\partial_t \xi\, m^\e\,dxdt + \int_0^T \!\!\!\intrr (\nabla \xi): (v\otimes v) \bigl( f^\e - M_\gamma[\rho^\e]\bigr)\,dzdt \\
			&\hspace{8em} = \int_0^T\!\!\!\intr -(\rho^\e)^\gamma\,\text{Tr}(\nabla\xi) + \xi\cdot \rho^\e \sfF[\rho^\e] + \frac{1}{\e}\,\xi\cdot m^\e\,dxdt.
		\end{align*}
Choosing $\xi=\nabla\varphi$, the two equations can be combined to obtain
		\begin{align}\label{lim_weakf}
			\begin{aligned}
				&\intr \varphi(0,x)\rho^\e_0(x)\,dx + \int_0^T\!\!\!\intr \partial_t\varphi\,\rho^\e + \Delta\varphi\,(\rho^\e)^\gamma - \nabla\varphi \cdot \rho^\e\, \sfF[\rho^\e]\,dxdt \cr
				&\hspace*{8em} = - \intr \e\nabla\varphi(0,x) m^\e_0(x)\,dx - \int_0^T \!\!\!\intr \e\,\partial_t \nabla\varphi \cdot  m^\e\,dx dt \\
				 &\hspace*{12em}  + \int_0^T \!\!\!\intrr (\nabla^2 \varphi) \!:\!  (v\otimes v)\,\bigl( f^\e - M_\gamma[\rho^\e]\bigr)\,dz\,dt. 
			\end{aligned}
		\end{align}

	%
	%
	%
	%
	%
	%
\subsection{Passing to the limit $\e \to 0$}
				
To analyze the convergence, we use the results from Section \ref{sec_str}, which state: For every $q \in [1,\gamma)$,
\begin{gather}\label{con_rho_f}
	\begin{gathered}
	\rho^\e \rightharpoonup \rho \quad \mbox{in } L^\gamma((0,T) \times \bbR^d) \quad \mbox{and} \quad \rho^\e \to \rho \quad \mbox{in } L^q((0,T) \times \bbR^d) \mbox{ and a.e.},\\
			\rho^\e(t,\cdot)\,\calL^d \rightharpoonup \rho(t,\cdot)\,\calL^d\quad\text{narrowly in $\calP(\bbR^d)$}\quad\text{for every $t\in[0,T]$.}
	\end{gathered}
\end{gather}
For the initial data, the assumed convergence gives
\[
\intr \varphi(0,x)\rho^\e_0(x)\,dx \to \intr \varphi(0,x)\rho_0(x)\,dx \quad \mbox{as} \quad \e \to 0.
\]
Using \eqref{con_rho_f}, we easily  get
	\[
	\int_0^T\!\!\!\intr \pa_t \varphi \rho^\e \,dxdt\to\int_0^T\!\!\!\intr \pa_t \varphi \rho\,dxdt \quad \mbox{as} \quad \e \to 0.
	\]
The terms involving $m^\e$ vanish in the limit due to the uniform-in-$\e$ bound estimate on $m^\e \in L^\infty([0,T];L^1(\bbR^d))$ (cf.\ Proposition \ref{prop_uni}), and the dissipative estimates in Corollary \ref{cor:diss} imply
 \begin{align*}
	 &\lt|\int_0^T\!\!\! \intrr (\nabla^2 \varphi): (v\otimes v) \bigl( f^\e - M_\gamma[\rho^\e]\bigr)\,dzdt\rt|   \cr
	 &\hspace*{8em} \leq \|\nabla^2 \varphi\|_{L^\infty} \int_0^T\!\!\!\intrr |v|^2|M_\gamma[\rho^\e]- f^\e| \,dzdt \le C_d\e.
 \end{align*}

For the nonlinear diffusion term, we find $(\rho^\e)^\gamma \to \rho^\gamma$ a.e. and $(\rho^\e)^\gamma \to \rho^\gamma$ in $\calD'((0,T) \times \Omega)$ for any compact set $\Omega \subset \bbR^d$ since $\rho^\e \to \rho$ a.e. Consequently,
	\[
	\int_0^T\!\!\!\intr \Delta \varphi\,(\rho^\e)^\gamma\,dxdt\to\int_0^T\!\!\!\intr \Delta \varphi\,\rho^\gamma\,dxdt \quad \mbox{as $\e \to 0$,} 
	\]
since $\varphi$ is compactly supported.

The term involving $V$ is linear and can be easily handled. To analyze the convergence of the nonlocal interaction term, we use the symmetry of $K$ to obtain
	\begin{align*}
		&\intr   \nabla\varphi \cdot (\rho^\e \nabla K * \rho^\e - \rho \nabla K * \rho)\,dx\cr
		&\hspace*{4em} = \intrr \nabla \varphi(x) \cdot (\rho^\e - \rho)(x) \nabla K (x-y) \rho^\e(y)\,dxdy \\
		&\hspace*{4em}\qquad+ \intrr \nabla \varphi(x) \cdot \rho(x) \nabla K(x-y) (\rho^\e - \rho)(y)\,dxdy\cr
		&\hspace*{4em} = \intrr (\rho^\e - \rho)(x) \nabla K(x-y) \cdot (\nabla \varphi(x) \rho^\e(y) - \nabla \varphi(y) \rho(y))\,dxdy\cr
                 &\hspace*{4em} = \frac12\intrr (\rho^\e - \rho)(x) \nabla K(x-y) \cdot (\nabla \varphi(x) - \nabla \varphi(y))(\rho^\e + \rho)(y)\,dxdy\cr
                 &\hspace*{4em} = \frac{1}{2}\bigl(I^\e + II^\e\bigr),
	\end{align*}
where 
\[
I^\e := \iint_{|x-y| \leq R} (\rho^\e - \rho)(x) \nabla K(x-y) \cdot (\nabla \varphi(x) - \nabla \varphi(y))(\rho^\e + \rho)(y)\,dxdy,
\]
\[
II^\e := \iint_{|x-y| \geq R} (\rho^\e - \rho)(x) \nabla K(x-y) \cdot (\nabla \varphi(x) - \nabla \varphi(y))(\rho^\e + \rho)(y)\,dxdy.
\]
For $I^\e$, we estimate
\begin{align*}
I^\e &\leq \|\nabla^2 \varphi\|_{L^\infty} \iint_{|x-y| \leq R} |(\rho^\e - \rho)(x)| |(x-y)\cdot\nabla K(x-y)| (\rho^\e + \rho)(y)\,dxdy\cr
&\leq C_K  \|\nabla^2 \varphi\|_{L^\infty} \iint_{|x-y| \leq R} |(\rho^\e - \rho)(x)|(1 +  |K(x-y)|) (\rho^\e + \rho)(y)\,dxdy\cr
&\leq C_K  \|\nabla^2 \varphi\|_{L^\infty} \lt( 2\|\rho^\e - \rho\|_{L^1} +  \iint_{|x-y| \leq R} |(\rho^\e - \rho)(x)| |K(x-y)| (\rho^\e + \rho)(y)\,dxdy\rt)\cr
&=: I_1^\e + I_2^\e.
\end{align*}
It is clear that $I_1^\e \to 0$ as $\e \to 0$. For $I^\e_2$, using the first condition in \eqref{condi_hk2}, i.e., $\frac1p < 2 - \frac2\gamma$, we find indices $\ell \in [1,\gamma)$ and $s \in [1,\gamma]$ such that
\[
\frac1\ell + \frac1s = 2 - \frac1p.
\]
Applying Young's convolution inequality, we then obtain
\[
I_2^\e \leq C_K \|\nabla^2 \varphi\|_{L^\infty} \|K\|_{L^p(B_{2R})}\|\rho^\e - \rho\|_{L^\ell} \|\rho^\e + \rho\|_{L^s} \to 0 \quad \mbox{as} \quad \e \to 0.
\]
A similar argument holds for $II^\e$. Using the condition $0 \leq \frac1r < 2 - \frac2\gamma$, we find $\ell \in [1,\gamma)$ and $s \in [1,\gamma]$ such that
\[
\frac1\ell + \frac1s = 2 - \frac1r.
\]
We then estimate
\[
II^\e \leq C_K \|\nabla \varphi\|_{L^\infty} \|\nabla K\|_{L^p(\R^d \setminus B_R)}\|\rho^\e - \rho\|_{L^\ell} \|\rho^\e + \rho\|_{L^s} \to 0 \quad \mbox{as} \quad \e \to 0.
\]
 
	To conclude, we pass to the limit $\e \to 0$ in \eqref{lim_weakf} to obtain
	\[
\intr \varphi(0,x)\rho_0(x)\,dx + \int_0^T \!\!\!\intr \partial_t\varphi\,\rho +\Delta\varphi\,\rho^\gamma - \nabla\varphi \cdot \rho \sfF[\rho]\,dxdt = 0,
	\]
	which implies the limit $\rho$ is the weak solution to
	\[
		\pa_t \rho = \Delta \rho^\gamma + \nabla\cdot(\rho \sfF[\rho]).
	\]
	%
	%
	%
	%
	%
	%
	
	As for the uniform-in-time bound of the energy $\calF$, we simply use the lower semicontinuity of $\calF$ w.r.t.\ the narrow convergence in $\calP(\bbR^d)$ and the minimization property of the local equilibrium,
	\[
		\left(1+\frac{d}{2}\right)\|\rho^\e(t,\cdot)\|_{L^\gamma}^\gamma  =\intrr H[M_\gamma[\rho^\e(t,\cdot)]]\,dz \le \intrr H[f^\e(t,\cdot)]\,dz,
	\]
	to obtain
	\[
		\calF[\rho(t,\cdot)] \le \liminf_{\e\to 0} \calF[\rho^\e(t,\cdot)] \le \sup_{\e>0} \calE[f_0^\e] <+\infty,
	\]
	for every $t\in[0,T]$. Taking the supremum in $t\in[0,T]$ yields the required energy bound.

	\subsection{Convergence of $f^\e$}	
	In this part, we show the convergence of $f^\e$ to $M_\gamma[\rho]$. We first notice from Corollary \ref{cor:diss} that
\[
	\|f^\e - M_\gamma[\rho^\e]\|_{L^2((0,T);L^{1+ 2/n}(\bbR^d \times \bbR^d))} \leq C\e 
\]
	for some $C>0$ independent of $\e>0$. In particular, this implies
\[
	\|f^\e - M_\gamma[\rho^\e]\|_{L^1((0,T);L^1_{loc}(\bbR^d \times \bbR^d))} \to 0 \quad\text{as $\e \to 0$.}
\]	
	On the other hand, it follows from Lemma \ref{lem:nemytskii} that	
\[
\|M_\gamma[\rho^\e]-M_\gamma[\rho]\|_{L^1(\bbR^d \times \bbR^d)} \le a_\gamma\|\rho^\e-\rho\|_{L^1(\bbR^d \times \bbR^d)},
\]	
where $a_\gamma > 0$ is independent of $\e > 0$. Combining these two estimates together with \eqref{con_rho_f} yields
\begin{align*}
\|f^\e - M_\gamma[\rho]\|_{L^1(0,T;L^1_{loc}(\bbR^d \times \bbR^d))} 
\;\longrightarrow\; 0  \quad\text{as $\e \to 0$},
\end{align*}
thus completing the proof.
	
	%
	%
	%
	%
	%
	%
	\section{Existence of solutions: Proof of Theorem~\ref{thm_kext}}\label{sec_exi}
	
	This section aims to prove the existence of weak entropy solutions to the BGK-type model
	\begin{equation}\label{eq:extend-exist}
		\e\pa_t f^\e+v\cdot \nabla f^\e -\sfF[\rho^\e]\cdot\nabla_v f^\e = \frac{1}{\e}\bigl(M_\gamma[\rho^\e]-f^\e\bigr),
	\end{equation}
	subject to the initial data $f(0,\cdot) = f_0$ in $\frkX^\gamma$, i.e., we are concerned with Theorem~\ref{thm_kext}. As before, the force is given by
	\[
		\sfF[\rho] = \nabla V + \nabla K\ast\rho,
	\]
	with an external potential $V$ and interaction potential $K$ satisfying the assumptions {\bf(HV)} and {\bf(HK)} respectively. 
	
	The proof of Theorem~\ref{thm_kext} follows by approximation from a sequence of mild solutions to \eqref{eq:extend-exist} for regular forces $\sfF$ (cf.\ Theorem~\ref{thm:mild} below). These mild solutions are obtained via a fixed-point argument by Leray--Schauder (see eg. \cite[Chapter 6.8]{Zeidler}).
	
	\subsection{Preliminary results}
	We begin by discussing the ingredients of the fixed-point operator defined in the following section.
	
	We recall the map $m_\gamma:[0,\infty)\times \bbR^d \to[0,\infty)$ defined for $\gamma \in (1, 1 +\frac{2}{d+2}]$ by
	\[
		m_\gamma(\rho,v) = c_{\gamma,d}\bigl(b_0\rho^{\gamma-1}-|v|^2\bigr)^{n/2}_+,\qquad b_0 = \frac{2\gamma}{\gamma-1},
	\]
	and the Nemytskii operator $M_\gamma[\rho](z) = m_\gamma(\rho(x),v)$, $z=(x,v)$. 

	\subsubsection{Lagrangian flow}
	An important ingredient we will need is the Lagrangian flow $\Phi_t^\rho(s,z)$ defined for $s,t\in[0,T]$ and $z\in\R^{d}\times\bbR^d$, as follows: Let $Z_t \coloneqq (X_t,V_t)$ satisfy
	\begin{align*}
		\frac{d}{dt} X_t = \frac{1}{\e}V_t,\qquad \frac{d}{dt} V_t = - \frac{1}{\e}\sfF[\rho_t](X_t),
	\end{align*}
	with the condition $(X_s,V_s)=(x,v)=z$, $s\in[0,T]$. We then set $\Phi_t^\rho(s,z)\coloneqq Z_t$ for $t\in[0,T]$.
	
	We will restrict ourselves for the moment to the class of regular forces $\sfF$ satisfying
	\begin{align}\label{ass:flow}
		\left\{\quad\begin{gathered}
		\esssup_{t\in[0,T]}\|\sfF[\eta_t]\|_{\calC_b^1} \le c_\sfF\|\eta\|_{L^\infty(L^1)}\qquad \forall\eta\in L^\infty((0,T);L^1(\bbR^d)),\\
		\esssup_{t\in[0,T]}\|\sfF[\rho_t]-\sfF[\eta_t]\|_{\infty} \to 0\qquad\text{as}\qquad \|\rho-\eta\|_{L^\infty((0,T);L^1(\bbR^d))} \to 0. 
		\end{gathered}\right.\tag{A$_\sfF$}
	\end{align}	
	\begin{lemma}
		Let $\rho\in L^\infty((0,T);L^1(\bbR^d))$ and $\sfF$ satisfy \eqref{ass:flow}. Then a conservative flow exists globally, i.e., $\det D\Phi_t^\rho(s,z)=1$ for all $s,t\in[0,T]$ and $z\in\bbR^d\times\bbR^d$. Moreover,
		\[
		\begin{gathered}
			|\Phi_t^{\rho}(s,z)| \le |z|e^{c_\sfF\|\rho\|_{L^\infty(L^1)}|t-s|/\e},\qquad
			\|D\Phi_t^\rho(s,z)\|_{\rm F} \le e^{(1+c_\sfF\|\rho\|_{L^\infty(L^1)})|t-s|/\e}.
		\end{gathered}
		\]
	\end{lemma}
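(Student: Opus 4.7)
The plan is to verify the four assertions in turn: global well-posedness of the characteristic flow, the volume-preserving identity $\det D\Phi_t^\rho = 1$, the pointwise growth bound on $\Phi_t^\rho$, and the Grönwall-type bound on $D\Phi_t^\rho$.

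First, I would observe that under assumption~\eqref{ass:flow}, the characteristic vector field
\[
	b_\rho(t, z) = \frac{1}{\e}\bigl(v,\, -\sfF[\rho_t](x)\bigr), \qquad z=(x,v),
\]
is measurable in $t$, globally bounded on $[0,T]\times\bbR^d\times\bbR^d$ (noting $\|\sfF[\rho_t]\|_\infty \le c_\sfF\|\rho\|_{L^\infty(L^1)}$), and uniformly Lipschitz in $z$ with constant controlled by $\e^{-1}(1 + c_\sfF\|\rho\|_{L^\infty(L^1)})$, since $\sfF[\rho_t]\in\calC_b^1$ with $\|D\sfF[\rho_t]\|_\infty \le c_\sfF\|\rho\|_{L^\infty(L^1)}$. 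Cauchy--Lipschitz (Picard--Lindelöf) then yields, for each $(s,z)$, a unique solution $Z_\cdot = \Phi_\cdot^\rho(s,z)$ defined on all of $[0,T]$.

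Next, volume preservation follows from Liouville's formula: the vector field $b_\rho$ is divergence-free in $z$, because $\nabla_x\cdot v = 0$ and $\sfF[\rho_t](x)$ does not depend on $v$, giving $\nabla_v\cdot\sfF[\rho_t](x) = 0$. Hence
\[
	\det D\Phi_t^\rho(s,z) = \exp\int_s^t (\nabla_z\cdot b_\rho)(\tau, \Phi_\tau^\rho(s,z))\, d\tau = 1.
\]
For the pointwise growth bound, I would differentiate $|Z_t|^2 = |X_t|^2 + |V_t|^2$ to obtain
\[
	\frac{d}{dt}|Z_t|^2 = \frac{2}{\e}\bigl(V_t\cdot X_t - \sfF[\rho_t](X_t)\cdot V_t\bigr),
\]
and apply the Cauchy--Schwarz inequality together with $\|\sfF[\rho_t]\|_\infty \le c_\sfF\|\rho\|_{L^\infty(L^1)}$ to reach a Grönwall-type differential inequality from which the stated exponential estimate follows after integration in $|t-s|$.

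For the Jacobian bound, differentiating the characteristic ODE in the initial datum $z$ gives the linear matrix equation
\[
	\frac{d}{dt}D\Phi_t^\rho(s,z) = \frac{1}{\e}\, A_\rho\bigl(t,\Phi_t^\rho(s,z)\bigr)\, D\Phi_t^\rho(s,z), \qquad D\Phi_s^\rho(s,z) = I_{2d},
\]
with block matrix
\[
	A_\rho(t,z) = \begin{pmatrix} 0 & I_d \\ -D_x\sfF[\rho_t](x) & 0 \end{pmatrix},
\]
whose Frobenius norm satisfies $\|A_\rho(t,\cdot)\|_{\rm F} \le 1 + c_\sfF\|\rho\|_{L^\infty(L^1)}$ by~\eqref{ass:flow}. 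Applying Grönwall's inequality to $t\mapsto \|D\Phi_t^\rho(s,z)\|_{\rm F}$ yields the claimed exponential bound. I do not anticipate any serious obstacle; the argument is a standard combination of Cauchy--Lipschitz, Liouville's formula, and Grönwall's inequality. The only delicate point is justifying differentiability of the flow with respect to $z$, which is immediate since $b_\rho(t,\cdot)\in\calC_b^1$ uniformly in $t$.
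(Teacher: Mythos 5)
The paper states this lemma without a proof, so there is no reference argument to compare against; your plan (Cauchy--Lipschitz for well-posedness, Liouville's formula for incompressibility, Gr\"onwall for the growth estimates) is the natural one, and the first two items are fine. However, the growth-estimate parts of your plan will not deliver the inequalities as stated, and you should flag the discrepancy rather than assert that the estimates ``follow after integration''.

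For the pointwise bound, differentiating $|Z_t|^2$ and using $\|\sfF[\rho_t]\|_\infty\le c_\sfF\|\rho\|_{L^\infty(L^1)}$ gives the \emph{affine} inequality
\[
\frac{d}{dt}|Z_t|^2 \le \frac{1}{\e}|Z_t|^2 + \frac{2c_\sfF\|\rho\|_{L^\infty(L^1)}}{\e}|Z_t|,
\]
and Gr\"onwall then yields something like $|Z_t|\le |z|e^{|t-s|/(2\e)} + 2c_\sfF\|\rho\|_{L^\infty(L^1)}\bigl(e^{|t-s|/(2\e)}-1\bigr)$. This differs from the stated bound in two ways: the exponential rate $1/(2\e)$ comes from the $X\cdot V$ cross term (not $c_\sfF\|\rho\|_{L^\infty(L^1)}/\e$), and there is an unavoidable additive term. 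In fact a purely multiplicative bound $|\Phi_t^\rho(s,z)|\le |z|e^{\cdots}$ cannot hold in general: at $z=0$ it would force $\Phi_t^\rho(s,0)\equiv 0$, i.e.\ $\sfF[\rho_t](0)=0$, which is not implied by \eqref{ass:flow}.

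For the Jacobian bound, the inequality $\|A_\rho(t,\cdot)\|_{\rm F}\le 1 + c_\sfF\|\rho\|_{L^\infty(L^1)}$ is false: the Frobenius norm of the identity block is $\|I_d\|_{\rm F}=\sqrt{d}$, and the $\calC^1_b$ assumption controls the sup (operator) norm of $D_x\sfF$, not its Frobenius norm. Worse, at $t=s$ one has $\|D\Phi_s^\rho(s,z)\|_{\rm F}=\|I_{2d}\|_{\rm F}=\sqrt{2d}>1$, so the claimed inequality already fails there. The argument does go through cleanly with the operator norm: $\|A_\rho(t,\cdot)\|_{\rm op}\le \max\{1,\|D_x\sfF[\rho_t]\|_{\rm op}\}\le 1+c_\sfF\|\rho\|_{L^\infty(L^1)}$ and $\|I_{2d}\|_{\rm op}=1$, which is presumably what was intended by ``$\|\cdot\|_{\rm F}$''. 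You should therefore either switch to the operator norm, or accept the $\sqrt{2d}$ prefactor, and for the first bound settle for an affine-in-$|z|$ estimate such as $|\Phi_t^\rho(s,z)|\le (|z|+2c_\sfF\|\rho\|_{L^\infty(L^1)})e^{|t-s|/(2\e)}$, which is all the subsequent remark on support propagation actually needs.
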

		
	\begin{remark}
		The first inequality shows, in particular, that if $z\in B(0,R)$,
		\[
		\Phi_t^{\rho}(s,z)\in B(0,R_t^\rho(s))\qquad\text{with}\qquad R_t^\rho(s)\coloneqq Re^{\e^{-1}c_\sfF\|\rho\|_{L^\infty(L^1)}|t-s|}.
		\]
		Therefore, if $h\in \frkX^\gamma$ with $\supp h \subset B(0,R)$, then
		\[
			h(\Phi_s^\rho(t,z)) >0 \quad\Longleftrightarrow\quad \Phi_s^\rho(t,z) \in B(0,R) \quad\Longleftrightarrow\quad z\in \Phi_t^\rho(s,B(0,R)),
		\]
		thus implying that
		\[
			h(\Phi_s^\rho(t,z)) = 0 \qquad\text{for all $z\notin \overline{B(0,R_t^\rho(s))}$}.
		\]
	\end{remark}
	
	\begin{lemma}\label{lem:lagrangian-stability}
		Let $\rho,\eta\in L^\infty((0,T); L^1(\bbR^d))$. Then,
		\begin{align*}
			|\Phi_t^{\rho}(s,z) - \Phi_t^{\eta}(s,z)| 
			&= \bigl( e^{(1+c_\sfF\|\rho\|_{L^\infty(L^1)})|t-s|/\e} - 1\bigr)\esssup_{t\in[0,T]}\|\sfF[\rho(t,\cdot)]-\sfF[\eta(t,\cdot)]\|_{\sup}
		\end{align*}
		for every $s,t\in[0,T]$ and $z\in\bbR^d\times\bbR^d$.
	\end{lemma}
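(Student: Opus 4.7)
The plan is to derive the stated bound by writing the system of ODEs satisfied by the two flows, subtracting them, and applying a Grönwall argument. Note that the statement should be read as an inequality (with $\le$), since a strict equality is unlikely to be derivable by the standard method.

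First, set $Z_t^\rho \coloneqq \Phi_t^\rho(s,z) = (X_t^\rho, V_t^\rho)$ and $Z_t^\eta \coloneqq \Phi_t^\eta(s,z) = (X_t^\eta, V_t^\eta)$, both satisfying the same initial condition $Z_s^\rho = Z_s^\eta = z$ at time $s$. Subtracting the ODEs governing each flow, one obtains
\[
    \frac{d}{dt}(X_t^\rho - X_t^\eta) = \frac{1}{\e}(V_t^\rho - V_t^\eta),
\]
\[
    \frac{d}{dt}(V_t^\rho - V_t^\eta) = -\frac{1}{\e}\bigl(\sfF[\rho_t](X_t^\rho) - \sfF[\eta_t](X_t^\eta)\bigr).
\]
The key trick is to add and subtract $\sfF[\rho_t](X_t^\eta)$ in the second equation, splitting the right-hand side into a term controlled by the Lipschitz property of $\sfF[\rho_t]$ (from the first bound in \eqref{ass:flow}, which gives $\|\sfF[\rho_t]\|_{\calC^1_b} \le c_\sfF\|\rho\|_{L^\infty(L^1)}$) and a term controlled by $\|\sfF[\rho_t] - \sfF[\eta_t]\|_{\sup}$.

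Setting $\delta(t) \coloneqq |Z_t^\rho - Z_t^\eta|$ and estimating in the Euclidean norm yields
\[
    \frac{d}{dt}\delta(t) \le \frac{1}{\e}\bigl(1 + c_\sfF\|\rho\|_{L^\infty(L^1)}\bigr)\delta(t) + \frac{1}{\e}\esssup_{\tau\in[0,T]}\|\sfF[\rho_\tau] - \sfF[\eta_\tau]\|_{\sup},
\]
where the coefficient $1$ comes from the first ODE and $c_\sfF\|\rho\|_{L^\infty(L^1)}$ from the Lipschitz estimate. Since $\delta(s) = 0$, Grönwall's inequality (in the form for non-homogeneous linear ODEs with constant coefficients) integrates this to
\[
    \delta(t) \le \bigl(e^{(1+c_\sfF\|\rho\|_{L^\infty(L^1)})|t-s|/\e} - 1\bigr)\esssup_{\tau\in[0,T]}\|\sfF[\rho_\tau] - \sfF[\eta_\tau]\|_{\sup},
\]
for both $t \ge s$ and $t \le s$, which is the desired estimate.

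The argument is entirely elementary and presents no real obstacles; the only minor care needed is in choosing the correct norm on $\R^{2d}$ so that the coefficient of $\delta(t)$ in the differential inequality comes out exactly $1 + c_\sfF\|\rho\|_{L^\infty(L^1)}$ (one can use the $\ell^1$ or $\ell^\infty$ norm on $(X,V)$, or absorb constants into a symmetric treatment). Also, one tacitly uses that $s \mapsto \sfF[\rho_s]$ and $s \mapsto \sfF[\eta_s]$ are measurable enough for the ODEs to make sense in the Carathéodory framework, which follows from the regularity built into \eqref{ass:flow}.
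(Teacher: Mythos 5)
The paper states Lemma~\ref{lem:lagrangian-stability} without supplying a proof, so there is nothing in the source to compare against; your Gr\"onwall argument is the natural way to supply one, and it is correct. You rightly read the ``$=$'' in the display as a typo for ``$\le$''. The decomposition $\sfF[\rho_t](X_t^\rho)-\sfF[\eta_t](X_t^\eta)=\bigl(\sfF[\rho_t](X_t^\rho)-\sfF[\rho_t](X_t^\eta)\bigr)+\bigl(\sfF[\rho_t](X_t^\eta)-\sfF[\eta_t](X_t^\eta)\bigr)$, the Lipschitz bound from the first line of \eqref{ass:flow}, and the sup bound from the second term give, with $\delta(t)\coloneqq|X_t^\rho-X_t^\eta|+|V_t^\rho-V_t^\eta|$, the a.e.\ differential inequality $\delta'(t)\le \e^{-1}(1+c_\sfF\|\rho\|_{L^\infty(L^1)})\delta(t)+\e^{-1}\esssup_\tau\|\sfF[\rho_\tau]-\sfF[\eta_\tau]\|_{\sup}$; Gr\"onwall with $\delta(s)=0$ then yields $\delta(t)\le \frac{1}{1+c_\sfF\|\rho\|_{L^\infty(L^1)}}\bigl(e^{(1+c_\sfF\|\rho\|_{L^\infty(L^1)})|t-s|/\e}-1\bigr)\esssup_\tau\|\sfF[\rho_\tau]-\sfF[\eta_\tau]\|_{\sup}$, which is even slightly sharper than what the lemma asserts, since $1+c_\sfF\|\rho\|_{L^\infty(L^1)}\ge 1$. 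One small remark: with the $\ell^1$ norm on $(X,V)$, the coefficient in the linear term naturally comes out as $\max\{1,c_\sfF\|\rho\|_{L^\infty(L^1)}\}\le 1+c_\sfF\|\rho\|_{L^\infty(L^1)}$, so there is no need to fine-tune the norm; simply bounding $\max$ by the sum suffices. Your caveat about the Carath\'eodory framework is appropriate, since $t\mapsto\sfF[\rho_t]$ is only $L^\infty$ in time, but \eqref{ass:flow} provides exactly the uniform $\calC^1_b$ bounds needed.
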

	
	\subsection{Existence of mild solutions}
	
	We are now ready to define a candidate for our fixed point map $\Lambda\colon L^\infty((0,T); \frkR^\gamma) \to L^\infty((0,T); \frkR^\gamma)$. In the following, we consider an arbitrary but fixed initial data $f_0\in \frkX^\gamma$ with compact support $\text{\em supp} \,f_0\subset B_R(0)$. Then, we set
	\[
		(\Lambda \rho)(t,x)\coloneqq \intr (\varGamma\rho)(t,x,v)\,dv,\qquad \rho\in L^\infty((0,T);\frkR^\gamma),
	\]
	where the map $\varGamma\colon L^\infty((0,T); \frkR^\gamma) \to L^\infty([0,T];\frkX^\gamma)$ is given by
	\[
		f_\rho(t,z)\coloneqq (\varGamma \rho)(t,z) \coloneqq e^{-t/\e^2} f_0(\Phi_0^{\rho}(t,z)) + \frac{1}{\e^2}\int_0^t e^{-(t-s)/\e^2} M_\gamma[\rho_s](\Phi_s^{\rho}(t,z))\,ds,
	\]
	for $z\in \Omega_T = B(0,R_T^\rho(0))$ and $f_\rho \equiv 0$ otherwise.
	
	\medskip
	Notice that $f_\rho$ satisfies the linear kinetic equation
	\begin{align}\label{eq:lin-kin}
		\e\partial_t f_\rho + v\cdot\nabla_xf_\rho - \sfF[\rho]\cdot\nabla_z f_\rho = \frac{1}{\e}\bigl(M_\gamma[\rho] - f_\rho\bigr)\qquad\text{in the sense of distributions},
	\end{align}
	and has a bounded support that depends on the mass of $\rho$.
	
	\begin{definition}[Mild solutions]\label{def:mild-solution} A curve $f\in L^\infty([0,T];\frkX^\gamma)$ is said to be a mild solution of \eqref{eq:extend-exist} with initial data $f_0\in\frkX^\gamma$ if is satisfies
	\[
		f(t,z) = e^{-t/\e^2} f_0(\Phi_0^{\rho}(t,z)) + \frac{1}{\e^2}\int_0^t e^{-(t-s)/\e^2} M_\gamma[\rho_s](\Phi_s^{\rho}(t,z))\,ds,
	\]
	for almost every $t\in[0,T]$, where $\rho_t = \intr f(t,\cdot,v)\,dv$.
	\end{definition}
	
	\begin{theorem}\label{thm:mild}
		Let $f_0\in\frkX^\gamma$ be an initial data having compact support. Then, there exists a mild solution to \eqref{eq:extend-exist} in the sense of Definition~\ref{def:mild-solution}.
	\end{theorem}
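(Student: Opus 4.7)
The plan is to obtain mild solutions via the Leray--Schauder fixed-point theorem applied to the map $\Lambda$, viewed as a self-map of the Banach space $X = \calC([0,T]; L^1(\bbR^d))$ restricted to the cone of nonnegative densities. A fixed point $\rho^\star = \Lambda(\rho^\star)$ immediately produces a mild solution $f^\star = \varGamma\rho^\star$ in the sense of Definition~\ref{def:mild-solution}.

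The first ingredient is a uniform a priori bound for all solutions of $\rho = \sigma\Lambda(\rho)$ with $\sigma\in[0,1]$. Exploiting conservativity of the Lagrangian flow ($\det D\Phi = 1$) together with $\int M_\gamma[\rho]\,dv = \rho$, the change of variables $z\mapsto \Phi_s^\rho(t,z)$ in the mild formula yields
\[
\|\Lambda(\rho)(t,\cdot)\|_{L^1} = e^{-t/\e^2}\|f_0\|_{L^1} + \frac{1}{\e^2}\int_0^t e^{-(t-s)/\e^2}\|\rho_s\|_{L^1}\,ds,
\]
from which a Gronwall argument gives $\|\rho(t,\cdot)\|_{L^1} \le \|f_0\|_{L^1}$ for all $t$, uniformly in $\sigma\in[0,1]$. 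An analogous $\frkR^\gamma$-bound is obtained by applying Minkowski's integral inequality to the mild formula together with the identity $\|M_\gamma[\rho]\|_{L^{1+2/n}}^{1+2/n} = c_n\|\rho\|_{L^\gamma}^\gamma$ coming from Lemma~\ref{lem:minimization}.

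For continuity of $\Lambda$ on $X$, I would decompose $\varGamma\rho - \varGamma\eta$ into an initial-data piece $f_0(\Phi_0^\rho) - f_0(\Phi_0^\eta)$, a flow-mismatch piece $M_\gamma[\rho_s](\Phi_s^\rho) - M_\gamma[\rho_s](\Phi_s^\eta)$, and an equilibrium piece $M_\gamma[\rho_s] - M_\gamma[\eta_s]$. The first two are controlled via Lemma~\ref{lem:lagrangian-stability} together with the second line of \eqref{ass:flow} (uniform convergence of the forces under $L^\infty((0,T);L^1)$ convergence of the densities), while the equilibrium piece is handled by the local Lipschitz property of $M_\gamma$ in Lemma~\ref{lem:nemytskii}. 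For compactness on bounded subsets of $X$, spatial tightness in the $x$-variable is provided by the support-propagation bound $R_T^\rho(0)\le Re^{\e^{-1}c_\sfF\|\rho\|_{L^\infty((0,T);L^1)}T}$, made possible by the compact support of $f_0$; equicontinuity in $t$ follows by differentiating the mild representation in $t$, using the exponential damping together with the first line of \eqref{ass:flow}; and tightness in the $v$-variable, where $M_\gamma[\rho]$ need not be uniformly compactly supported, is secured through the moment identity $\int |v|^2M_\gamma[\rho]\,dv = d\rho^\gamma$ and the $L^\gamma$-bound derived above. A standard Arzel\'a--Ascoli argument then delivers relative compactness in $X$.

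With continuity, compactness, and the uniform a priori bound in place, the Leray--Schauder principle produces a fixed point of $\Lambda$, yielding the desired mild solution. The main obstacle will be the compactness step: because the phase space is unbounded and no pointwise bound on $\rho$ is available, one must carefully weave together spatial tightness from the flow (in $x$), temporal equicontinuity from the mild formula, and moment-based tightness (in $v$). A convenient packaging would be to first obtain relative compactness of $\{\Lambda(\rho)\}$ in $\calC([0,T]; L^1_{loc}(\bbR^d))$ and then upgrade to $\calC([0,T]; L^1(\bbR^d))$ using the tightness estimates.
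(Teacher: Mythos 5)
Your overall Leray--Schauder scaffolding matches the paper's, and your a priori estimates are a sound (if slightly different) route: the $L^1$ bound via Gr\"onwall and the $L^\gamma$ bound via Minkowski's inequality plus the identity $\|M_\gamma[\rho]\|_{L^{1+2/n}}^{1+2/n}\simeq\|\rho\|_{L^\gamma}^\gamma$ and interpolation over the bounded support $\Omega_T^\rho$ both close, uniformly in the homotopy parameter, because the exponent on $\|\rho\|_{L^\infty(L^\gamma)}$ that comes out is strictly less than $1$. The paper instead establishes the $L^\gamma$ bound as a corollary of the full energy inequality (Lemma~\ref{lem:energy-inequality}), which it needs anyway for the later passage to weak entropy solutions, but your mild-formula estimate would serve equally well for the fixed-point step alone. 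The continuity argument you sketch (splitting into initial-data, flow-mismatch, and equilibrium pieces, controlled by Lemma~\ref{lem:lagrangian-stability}, \eqref{ass:flow}, and Lemma~\ref{lem:nemytskii}) is essentially the paper's.

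The compactness step, however, has a genuine gap. You propose to get relative compactness of $\Lambda(\calB)$ in $\calC([0,T];L^1)$ from three ingredients: spatial tightness (bounded support in $x$ from flow propagation), velocity tightness (second-moment bound), and temporal equicontinuity. None of these provides the crucial translation-equicontinuity in $x$ that Fr\'echet--Kolmogorov requires for strong $L^1$ (or $L^\gamma$) precompactness at each fixed $t$. Tightness plus uniform $L^\gamma$ boundedness only gives weak $L^1$ compactness (Dunford--Pettis), not strong. And the mild formula does not by itself produce regularity in $x$: the contribution of the initial datum $f_0\circ\Phi_0^\rho(t,\cdot)$ is fine because $f_0$ is a fixed $L^p$ function and the flows are uniformly bi-Lipschitz on $\calB$, but the inhomogeneous part $\int_0^t e^{-(t-s)/\e^2}M_\gamma[\rho_s]\circ\Phi_s^\rho(t,\cdot)\,ds$ inherits only the (nonexistent, a priori) modulus of continuity of $\rho_s$ itself, which is exactly the quantity you are trying to compactify --- composing with a Lipschitz flow does not improve it. This circularity is precisely what the paper breaks by invoking the velocity averaging lemma (\cite[Theorem~3.1]{Ja09}): because $f_\rho$ solves a transport equation with right-hand side $g_0[\rho]+\nabla_v\cdot g_1[\rho]$ bounded in $L^{1+2/n}$, the velocity average $\Lambda\rho=\int f_\rho\,dv$ gains fractional Besov regularity in $(t,x)$, and \emph{that} regularity, combined with the equi-integrability and tightness you identified, delivers the required precompactness. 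Without an averaging (or equivalent regularizing) mechanism, your Arzel\'a--Ascoli argument cannot close.
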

	
	As mentioned earlier, the existence of mild solutions follows from the Leray--Schauder principle (cf. \cite[Theorem 6.A]{Zeidler}), which we recall here for completeness.
	\begin{proposition}[Schauder's fixed point theorem]\label{prop:Leray-Schauder}
		Let $S\colon X\to X$ be a compact operator on a Banach space $X$. Suppose there is an $r>0$ such that
		\[
			u = \tau S(u),\quad\text{$\tau\in(0,1)$}\quad\Longrightarrow\quad \|u\|\le r.
		\]
		Then $S$ admits a fixed point, i.e., the equation $u=S(u)$ has a solution. 
	\end{proposition}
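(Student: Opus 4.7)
The plan is to reduce the statement to the classical Schauder fixed point theorem on a closed ball by introducing a radial retraction and exploiting the a priori bound hypothesis. Pick any radius $R > r$ and consider the closed ball $\overline{B}_R = \{u \in X : \|u\| \le R\}$ together with the radial retraction $P_R\colon X \to \overline{B}_R$ defined by
\[
    P_R(u) = \begin{cases} u, & \|u\| \le R,\\ R\,u/\|u\|, & \|u\| > R. \end{cases}
\]
It is standard that $P_R$ is continuous on $X$ (nonexpansive, in fact). My candidate map will be $T \coloneqq P_R \circ S \colon \overline{B}_R \to \overline{B}_R$.

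Next, I verify that $T$ meets the hypotheses of the Schauder fixed point theorem on the nonempty, closed, convex, bounded set $\overline{B}_R$. Continuity follows from composing the continuous maps $S$ (compactness implies continuity in the hypothesis, here I assume the usual convention that \textit{compact operator} means continuous and sending bounded sets to relatively compact sets) and $P_R$. For compactness, $S(\overline{B}_R)$ is relatively compact by assumption, and $P_R$ is continuous, so $T(\overline{B}_R) = P_R(S(\overline{B}_R))$ is also relatively compact. Schauder's theorem then furnishes $u_0 \in \overline{B}_R$ with $T(u_0) = u_0$.

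It remains to upgrade this to a fixed point of $S$, which is where the a priori bound enters. If $\|S(u_0)\| \le R$, then $P_R$ acts as the identity there and $u_0 = S(u_0)$, and we are done. Otherwise $\|S(u_0)\| > R$, so by construction
\[
    u_0 = P_R(S(u_0)) = \tau\, S(u_0), \qquad \tau = \frac{R}{\|S(u_0)\|} \in (0,1),
\]
and in this case $\|u_0\| = R$. But the hypothesis of the theorem applied to this $\tau\in(0,1)$ yields $\|u_0\| \le r$, contradicting $R > r$. Hence this alternative cannot occur and $u_0$ is the desired fixed point.

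The main subtlety (and really the only nontrivial point) is ensuring that the contradiction in the second case is sharp, which forces the strict choice $R > r$ and the strict inclusion $\tau \in (0,1)$ (so that the hypothesis genuinely applies). Everything else is a packaging of Schauder's theorem, itself a consequence of Brouwer via finite-dimensional approximation, which I would invoke as a black box rather than reprove.
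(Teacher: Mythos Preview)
Your proof is correct and is the standard reduction of the Leray--Schauder principle to the Schauder fixed point theorem via the radial retraction. The paper, however, does not prove this proposition at all: it is stated there ``for completeness'' with a reference to \cite[Theorem~6.A]{Zeidler} and then used as a black box. So there is no proof in the paper to compare against; you have supplied a complete (and entirely standard) argument where the authors simply cite the literature.
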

	
	The rest of this subsection is devoted to proving that the map $\Lambda$ satisfies the assumptions of Proposition~\ref{prop:Leray-Schauder} and concluding the proof of Theorem~\ref{thm:mild}. 

	\medskip
	\paragraph{\bf\em Self-mapping property of $\Lambda$.}  
	For any $p\ge 1$, we have that
	\begin{align*}
		\|f_\rho(t,\cdot)\|_{L^p}^p &\le 2^{p-1} \left[e^{-pt/\e^2} \|f_0\|_{L^p}^p + \intrr \left(\frac{1}{\e^2}\int_0^t e^{-(t-s)/\e^2} M_\gamma[\rho_s](\Phi_s^{\rho}(t,z))\,ds\right)^p dz\right] \\
		&\le 2^{p-1} \left[e^{-pt/\e^2} \|f_0\|_{L^p}^p + \frac{1}{\e^2}\int_0^te^{-(t-s)/\e^2}\intrr   \bigl(M_\gamma[\rho_s](z)\bigr)^p dz\,ds\right].
	\end{align*}
	In the case $p=1+2/n$, we find from Lemma~\ref{lem:minimization} that
	\[
		\intrr \bigl(M_\gamma[\rho_s](z)\bigr)^{1+2/n}\,dz = 2(1+2/n)c_{\gamma,d}^{2/n} \|\rho\|_{L^\infty(L^\gamma)}^\gamma ,
	\]
	and therefore,
	\[
		\|f_\rho\|_{L^\infty(L^{1+2/n})}^{1+2/n} \le c_1\bigl( \|f_0\|_{L^{1+2/n}}^{1+2/n} + \|\rho\|_{L^\infty(L^\gamma)}^\gamma\bigr),
	\]
	for some constant $c_1>0$, depending only on $n$.
	
	Now, by construction, we have that $\text{\em supp}\, f_\rho\subset \Omega_T$. Hence, we deduce
	\[
		\|\Lambda\rho(t,\cdot)\|_{L^{1+2/n}} \le |\Omega_T^\rho|^{\frac{1}{1+n/2}}\|f_\rho\|_{L^\infty(L^{1+2/n})}.
	\]
	As for the $L^1$-norm, we obtain
	\begin{align*}
		\|(\Lambda\rho)(t,\cdot)\|_{L^1} &= \|(\varGamma\!\rho)(t,\cdot)\|_{L^1} \\
		&= e^{-t/\e^2}\|f_0\circ\Phi_0^{\rho}(t,\cdot)\|_{L^1} + \frac{1}{\e^2}\int_0^t e^{-(t-s)/\e^2} \|M_\gamma[\rho_s](\Phi_s^{\rho}(t,\cdot))\|_{L^1}\,ds \\
		&= e^{-t/\e^2}\|f_0\|_{L^1} + \frac{1}{\e^2}\int_0^t e^{-(t-s)/\e^2} \|\rho(s,\cdot)\|_{L^1}\,ds \le \|f_0\|_{L^1} + \|\rho\|_{L^\infty(L^1)}.
	\end{align*}
	Together, the interpolation inequality then yields the bound
	\[
		\|\Lambda\rho\|_{L^\infty(L^\gamma)} \le \bigl(\|f_0\|_{L^1} + \|\rho\|_{L^\infty(L^1)}\bigr)^\theta\|\Lambda\rho\|_{L^\infty(L^{1+2/n})}^{1-\theta},\qquad \theta = \frac{(\gamma-1)d}{2}.
	\]
	Altogether, we obtain a bound for $\Lambda\rho$ of the form
	\begin{align}\label{eq:Lambda-bounded}
		\|\Lambda\rho\|_{L^\infty(L^\gamma)} \le c_1^{\frac{1-\theta}{1+2/n}}|\Omega_T^\rho|^{\frac{1-\theta}{1+n/2}}\bigl(\|f_0\|_{L^1} + \|\rho\|_{L^\infty(L^1)}\bigr)^\theta\bigl( \|f_0\|_{L^{1+2/n}}^{1+2/n} + \|\rho\|_{L^\infty(L^\gamma)}^\gamma\bigr)^{\frac{1-\theta}{1+2/n}}.
	\end{align}
	
	\medskip
	\paragraph{\bf\em Continuity of $\Lambda$.} We begin by establishing the continuity of the map $\varGamma$. Since $f_0\in\frkX^\gamma$, we find a sequence $(f_0^\ell)_{\ell\in\N}\subset \calC_c(\bbR^d\times\bbR^d)$ such that $f_0^\ell\to f_0$ in $\frkX^\gamma$ as $\ell\to \infty$. Now let $(\rho^h)_{h\in\N}$ be a sequence in $L^\infty((0,T);\frkR^\gamma)$ such that $\rho^h\to \rho$ in $L^\infty((0,T);\frkR^\gamma)$. Then, for $p=1,1+2/n$, we have for almost every $t\in[0,T]$,
	\begin{align*}
		\|(\varGamma\!\rho^h -\varGamma\!\rho)(t,\cdot)\|_{L^p} &\le e^{-t/\e^2}\|f_0(\Phi_0^{\rho^h}(t,\cdot)) - f_0(\Phi_0^{\rho}(t,\cdot))\|_{L^p} \\
	&\quad+ \frac{1}{\e^2}\int_0^t e^{-(t-s)/\e^2} \|M_\gamma[\rho^h](s,\Phi_s^{\rho^h}(t,\cdot)) - M_\gamma[\rho](s,\Phi_s^{\rho}(t,\cdot))\|_{L^p}\,ds \\
	&\eqqcolon e^{-t/\e^2} I_1(t) + \frac{1}{\e^2}\int_0^t e^{-(t-s)/\e^2} I_2(t,s)\,ds.
	\end{align*}
	For $I_1$, we have that
	\begin{align*}
	I_1(t) &\le \|f_0(\Phi_0^{\rho^h}(t,\cdot)) - f_0^\ell(\Phi_0^{\rho^h}(t,\cdot))\|_{L^p} + \|f_0(\Phi_0^\rho(t,\cdot)) - f_0^\ell(\Phi_0^\rho(t,\cdot))\|_{L^p} \\
	&\qquad + \|f_0^\ell(\Phi_0^{\rho^h}(t,\cdot)) - f_0^\ell(\Phi_0^\rho(t,\cdot))\|_{L^p} \\
	&= 2\|f_0-f_0^\ell\|_{L^p} + \|f_0^\ell(\Phi_0^{\rho^h}(t,\cdot)) - f_0^\ell(\Phi_0^\rho(t,\cdot))\|_{L^p}.
	\end{align*}
Therefore, passing first $h\to\infty$ using Lemma~\ref{lem:lagrangian-stability} and then $\ell\to\infty$ yields the limit $I_1(t)\to 0$.

Similarly, we deduce
\begin{align*}
	I_2(t,s) &\le \|M_\gamma[\rho^h](s,\Phi_s^{\rho^h}(t,\cdot)) - M_\gamma[\rho^h](s,\Phi_s^{\rho}(t,\cdot))\|_{L^p} \\
	&\qquad + \|M_\gamma[\rho^h](s,\Phi_s^{\rho}(t,z)) - M_\gamma[\rho](s,\Phi_s^{\rho}(t,z))\|_{L^p} \eqqcolon I_{21}(t,s) + I_{22}(t,s),
\end{align*}
where $I_{21}$ converges to zero following the previous argument. As for the second term, we use the change of variables along the flow $\Phi_s^\rho(t,z)$ to obtain
\begin{align*}
	I_{22}(t,s) = \|M_\gamma[\rho^h](s,\cdot) - M_\gamma[\rho](s,\cdot)\|_{L^p} \to 0
\end{align*}
due to the local Lipschitz continuity of $M_\gamma$ established in Lemma~\ref{lem:nemytskii}. 

To obtain the continuity of $\Lambda$, we argue, as in the previous part, that
\[
	\|(\Lambda \rho^h - \Lambda\rho)(t,\cdot)\|_{L^p} \le |\Omega_T^\rho|^{\frac{p-1}{p}}\|(\varGamma\!\rho^h-\varGamma\!\rho)(t,\cdot)\|_{L^{p}} \longrightarrow 0
\]
for almost every $t\in(0,T)$. The interpolation inequality then yields the desired result.
	
	\medskip
	\paragraph{\bf\em Compactness of $\Lambda$.} Let $\calB\subset L^\infty((0,T);\frkR^\gamma)$ be a bounded subset. Then \eqref{eq:Lambda-bounded} shows that $\Lambda(\calB)$ is bounded. We now show that $\Lambda(\calB)$ is totally bounded. 
	
	Notice that for every $\rho\in \calB$, $f_\rho=\varGamma\!\rho\in L^\infty([0,T];\frkX^\gamma)$ satisfies \eqref{eq:lin-kin}, i.e.,
	\[
		\e\partial_t f_\rho + v\cdot \nabla_x f_\rho = g_0[\rho] + \nabla_v\cdot g_1[\rho] \qquad\text{in the sense of distributions},
	\]
	with $g_0[\rho]=\e^{-1}(M_\gamma[\rho] - f_\rho)$ and $g_1[\rho] = f_\rho\,\sfF[\rho]$, such that
	\[
		\Bigl\{ g_0[\rho] + \nabla_v\cdot g_1[\rho]: \rho\in\calB\Bigr\}\subset L^\infty((0,T);W^{-1,1+2/n}(\bbR^d\times\bbR^d))\quad\text{is bounded}.
	\]
	Together with the fact that $\{f_\rho :\rho\in\calB\}\subset L^\infty([0,T];\frkX^\gamma)$ is bounded, the standard velocity averaging lemma (cf.~\cite[Theorem~3.1]{Ja09}) implies that
	\[
		\left\{\Lambda\rho =\intr \,f_\rho(\cdot,v)\,dv:\rho\in\calB\right\} \subset \dot B_{1+2/n,\infty}^{1/(2+n)}((0,T)\times\bbR^d)\quad\text{is bounded}.
	\]
	Together with the equi-uniform integrability of $\Lambda(\calB)\subset L^1\cap L^\gamma((0,T)\times\bbR^d)$, the Fr\'echet--Kolmogorov theorem then implies the relative compactness of $\Lambda(\calB)$ in $L^1\cap L^\gamma((0,T)\times\bbR^d)$, which further implies the total boundedness of $\Lambda$ on $L^\infty((0,T);\frkR^\gamma)$.
	
	\medskip
	\paragraph{\bf\em A priori energy estimate.} 
	Finally, we show that if $\rho = \tau\Lambda(\rho)$, $\tau\in(0,1)$, then $\|\rho\|_{L^\gamma}\le r$ for some $r>0$. More precisely, we prove the following result:
	
	\begin{lemma}\label{lem:energy-inequality}
		Let $V,K\ge 0$ and $\rho = \tau\Lambda(\rho)$, $\tau\in(0,1)$. Then, $f\coloneqq\varGamma\!\rho$ satisfies
		\begin{align}\label{eq:energy-inequality}
			\calE[f(t,\cdot)] + \frac{1}{\e^2}\int_0^t\!\!\intrr \bigl(H[f](s,z) - H[M_\gamma[\rho]](s,z) \bigr)\,dz\,ds   \le \calE[f_0], 
		\end{align}
		where we recall that
		\[
			\calE[f] = \intrr H[f](z)\,dz + \intr V\rho_f\,dx + \frac12 \intr \rho_f K\ast\rho_f\,dx.
		\]
		As a consequence of the previous energy inequality, we obtain 
		\begin{align}\label{eq:density-inequality}
			\|\rho\|_{L^\infty(L^\gamma)}^\gamma \le \calE[f_0].
		\end{align}
	\end{lemma}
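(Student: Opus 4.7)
The plan is to derive an entropy--energy identity for the linear kinetic equation \eqref{eq:lin-kin}, which $f=\varGamma\!\rho$ satisfies by construction, and then exploit the fixed-point relation $\rho=\tau\rho_f$ together with the hypothesis $V,K\ge 0$. First I would multiply \eqref{eq:lin-kin} by $|v|^2/2$ and integrate over $\bbR^d\times\bbR^d$; the transport term vanishes after integration by parts in $x$, while the force term yields $\intr\sfF[\rho]\cdot m_f\,dx$ after integration by parts in $v$, with $m_f\coloneqq\intr vf\,dv$. Multiplying by $\Psi_n'(f)$ and invoking the convexity inequality $\Psi_n'(f)(M_\gamma[\rho]-f)\le \Psi_n(M_\gamma[\rho])-\Psi_n(f)$ converts the internal-energy balance into a one-sided estimate. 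Summing the two identities gives
\[
\e\,\tfrac{d}{dt}\intrr H[f]\,dz + \intr\sfF[\rho]\cdot m_f\,dx \;\le\; \tfrac{1}{\e}\intrr\bigl(H[M_\gamma[\rho]]-H[f]\bigr)\,dz.
\]

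Next, integrating \eqref{eq:lin-kin} in $v$ yields the continuity equation $\e\partial_t\rho_f+\nabla\cdot m_f=\e^{-1}(\rho-\rho_f)$; together with the symmetry of $K$ this produces
\[
\e\,\tfrac{d}{dt}\Bigl[\intr V\rho_f\,dx+\tfrac{1}{2}\intr\rho_f\,K\ast\rho_f\,dx\Bigr] = \intr(\nabla V+\nabla K\ast\rho_f)\cdot m_f\,dx + \tfrac{1}{\e}\intr(V+K\ast\rho_f)(\rho-\rho_f)\,dx.
\]
Adding with the entropy balance and using $\sfF[\rho]=\nabla V+\tau\nabla K\ast\rho_f$ (since $\rho=\tau\rho_f$) together with $\rho-\rho_f=-(1-\tau)\rho_f$, the force-work contributions almost cancel, leaving
\[
\e\,\tfrac{d}{dt}\calE[f] + \tfrac{1}{\e}\intrr\bigl(H[f]-H[M_\gamma[\rho]]\bigr)\,dz \le (1-\tau)\intr\nabla K\ast\rho_f\cdot m_f\,dx - \tfrac{1-\tau}{\e}\intr(V+K\ast\rho_f)\rho_f\,dx,
\]
where the second residual is $\le 0$ by the nonnegativity of $V$ and $K$.

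The hard part is the non-sign-definite residual $(1-\tau)\intr\nabla K\ast\rho_f\cdot m_f\,dx$. I would handle it by one further integration by parts in $x$ combined with the continuity equation, which gives
\[
(1-\tau)\intr\nabla K\ast\rho_f\cdot m_f\,dx = \tfrac{(1-\tau)\e}{2}\tfrac{d}{dt}\intr\rho_f\,K\ast\rho_f\,dx + \tfrac{(1-\tau)^2}{\e}\intr\rho_f\,K\ast\rho_f\,dx.
\]
Absorbing the total time derivative into the left-hand side defines the modified energy $\tilde\calE[f]\coloneqq\intrr H[f]\,dz+\intr V\rho_f\,dx+\tfrac{\tau}{2}\intr\rho_f\,K\ast\rho_f\,dx$, and the remaining right-hand side consolidates to $-\tfrac{1-\tau}{\e}\intr V\rho_f\,dx-\tfrac{\tau(1-\tau)}{\e}\intr\rho_f\,K\ast\rho_f\,dx\le 0$. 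Integrating in time and noting $\tilde\calE[f_0]\le\calE[f_0]$ (because $\tau<1$ and $K\ge 0$) yields
\[
\tilde\calE[f(t,\cdot)] + \tfrac{1}{\e^2}\int_0^t\!\intrr\bigl(H[f]-H[M_\gamma[\rho]]\bigr)\,dz\,ds \le \calE[f_0],
\]
which delivers \eqref{eq:energy-inequality} in the form actually needed downstream by the Leray--Schauder argument.

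For \eqref{eq:density-inequality}, the nonnegativity of the dissipation and of the potential terms in $\tilde\calE$ gives $\intrr H[f(t,\cdot)]\,dz\le\calE[f_0]$ for every $t\in[0,T]$. The minimization principle of Lemma~\ref{lem:minimization} then furnishes $(1+d/2)\|\rho_f(t,\cdot)\|_{L^\gamma}^\gamma=\intrr H[M_\gamma[\rho_f(t,\cdot)]]\,dz\le\intrr H[f(t,\cdot)]\,dz\le\calE[f_0]$, and since $\rho=\tau\rho_f$ with $\tau\in(0,1)$ and $\gamma>1$,
\[
\|\rho\|_{L^\infty(L^\gamma)}^\gamma=\tau^\gamma\|\rho_f\|_{L^\infty(L^\gamma)}^\gamma\le\calE[f_0],
\]
as required.
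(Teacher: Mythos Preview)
Your approach differs substantively from the paper's. The paper works with a truncated entropy $\Psi_k(s)=\int_0^s k\wedge\Psi'(r)\,dr$ applied to $\tau f$ (so that $\Psi_k'$ is bounded and the entropy computation can be carried out directly on the Duhamel formula), and pairs it with the potential energy written in terms of $\rho$ rather than $\rho_f$; it then combines the resulting differential inequality, integrates in time, bounds $H_k[\tau f_0]\le H[f_0]$, and finally sends $k\to\infty$ and $\tau\to 1$ via Fatou. Your approach instead keeps the potential energy in terms of $\rho_f$, tracks the $(1-\tau)$-residuals explicitly, and uses a nice device---rewriting $(1-\tau)\intr\nabla K\ast\rho_f\cdot m_f\,dx$ via the continuity equation---to absorb the only non-sign-definite residual into a total time derivative. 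This is transparent about the role of $\tau<1$ and yields the modified functional $\tilde\calE$ with $\tau/2$ in front of the interaction term. Note, however, that you prove only
\[
\tilde\calE[f(t,\cdot)] + \tfrac{1}{\e^2}\int_0^t\!\intrr\bigl(H[f]-H[M_\gamma[\rho]]\bigr)\,dz\,ds \le \calE[f_0],
\]
which is strictly weaker than \eqref{eq:energy-inequality} as stated (since $\tilde\calE\le\calE$ does not allow you to upgrade). You rightly observe this is all that is needed downstream, and your derivation of \eqref{eq:density-inequality} is correct.

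One technical point is glossed over: multiplying \eqref{eq:lin-kin} by $\Psi_n'(f)=c\,f^{2/n}$ and integrating is a formal step that is not immediately justified for the mild solution $f=\varGamma\rho\in L^\infty([0,T];\frkX^\gamma)$. The paper's truncation $\Psi_k$ (with bounded derivative) and its differentiation of the explicit Duhamel formula are precisely what makes the entropy computation rigorous; you would need to insert an analogous approximation step to close your argument.
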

	\begin{proof}
	To this end, we recall the convex function $\Psi_n$ introduced in Section~\ref{sec:mini_p}:
		\[
			\Psi_n(s) = \frac{1}{2c_{\gamma,d}^{2/n}}\frac{s^{1+2/n}}{(1+2/n)}.
		\]
		In the following, we drop the subscript $n$ in $\Psi_n$ and integral domains for readability.
		
		Now set $\Psi_k(s)\coloneqq  \int_0^s k\wedge\Psi'(r)\,dr$, $k\ge 1$. It is clear to see that $\Psi_k$ is convex and satisfies $\Psi_k(s)\le ks$ for all $s\in[0,\infty)$. Since $f(t,\cdot)\in L^1(\bbR^d\times\bbR^d)$ for almost every $t\in[0,T]$, we also have that $\Psi_k(\tau f(t,\cdot))\in L^1(\bbR^d\times\bbR^d)$ for every $t\in[0,T]$ and $\tau\in(0,1)$. Moreover, the map
		\begin{align*}
			t\mapsto &\intrr \Psi_k(\tau f(t,z))\,dz \\
			&\hspace{4em}= \intrr \Psi_k\left( e^{-t/\e^2}\tau f_0(z) + \frac{1}{\e^2}\int_0^t e^{-(t-s)/\e^2} \tau M_\gamma[\rho_s](\Phi_s^{\rho}(0,z))\,ds\right)dz
		\end{align*}
		is differentiable with derivative
		\begin{align*}
			\frac{d}{dt}\intrr \Psi_k(\tau f(t,z))\,dz &= -\frac{1}{\e^2}\intrr \Psi_k'(\tau f(t,z))\,\bigr(\tau f(t,z) - \tau M_\gamma[\rho_t](z)\bigr)\,dz \\
			&\le -\frac{1}{\e^2}\intrr \Psi_k(\tau f(t,z))\,dz + \frac{1}{\e^2}\intrr \Psi_k(\tau M_\gamma[\rho_t](z))\,dz,
		\end{align*}
		where the last inequality follows from the convexity of $\Psi_k$. Similarly, we compute
		\begin{align*}
			&\frac{d}{dt} \left[\frac{1}{2}\intrr |v|^2 f(t,z)\,dz + \intr V(x) \rho(t,x)\,dx + \frac12 \intr \rho(t,x) K\ast\rho(t,x)\,dx \right]\\
			&\hspace{8em} = -\frac{1}{2\e^2}\intrr |v|^2 f(t,z)\,dz + \frac{1}{2\e^2}\intrr |v|^2 M_\gamma[\rho_t](z)\, dz.
		\end{align*}
		Together, this yields
		\begin{align*}
			&\frac{d}{dt}\left[\intrr H_k[\tau f](t,z)\,dz + \tau\intr V \rho(t,x)\,dx + \frac\tau 2 \intr \rho(t,x) K\ast\rho(t,x)\,dx \right] \\
			&\hspace{8em} \le -\frac{1}{\e^2}\intrr \bigl(H_k[\tau f](t,z) - H_k[\tau M_\gamma[\rho_t]](z) \bigr)\,dz,
		\end{align*}
		with $H_k[f] = |v|^2f/2 + \Psi_k(f)$. In particular, we obtain
		\begin{align*}
			&\intrr H_k[\tau f](t,z)\,dz + \tau\intr V(x) \rho(t,x)\,dx + \frac\tau 2 \intr \rho(t,x) K\ast\rho(t,x)\,dx \\
			&\hspace{4em}\le -\frac{1}{\e^2}\int_0^t\!\!\intrr \bigl(H_k[\tau f](s,z) - H[\tau M_\gamma[\rho_s]](z) \bigr)\,dz\,ds   \\
			&\hspace{8em}+\intrr H[f_0](z)\,dz + \intr V \rho_0\,dx + \frac12 \intr \rho_0 K\ast\rho_0\,dx,
		\end{align*}
		where we used the fact that $H_k[\tau f]\le H[\tau f]\le H[f]$ for every $k\ge 1$ and $\tau\in(0,1)$. Applying Fatou's lemma for $k\to\infty$ and $\tau\to 1$ yields the asserted energy inequality \eqref{eq:energy-inequality}.
		
		Finally, we obtain the uniform bound for $\rho\in L^\infty((0,T);\frkR^\gamma)$ by using the minimization principle (cf.\ Lemma \ref{lem:minimization}), which gives
		\begin{align*}
			\|\rho\|_{L^\infty(L^\gamma)}^\gamma &\le \esssup_{t\in(0,T)}\intrr H[M_\gamma[\rho]](t,z)\,dz \\
			&\le \esssup_{t\in(0,T)}\intrr H[f](t,z)\,dz \le \esssup_{t\in(0,T)}\calE[f(t,\cdot)] \le \calE[f_0],
		\end{align*}
		thereby concluding the proof.
		\end{proof}
		
		\paragraph{\bf\em Proof of Theorem~\ref{thm:mild}.} The proof follows by a direct application of Proposition~\ref{prop:Leray-Schauder}, since if $\rho = \Lambda(\rho)$ and $f = \varGamma\!\rho\in L^\infty([0,T];\frkX^\gamma)$, then
		\[
			f(t,z) = e^{-t/\e^2} f_0(\Phi_0^{\rho}(t,z)) + \frac{1}{\e^2}\int_0^t e^{-(t-s)/\e^2} M_\gamma[\rho_s](\Phi_s^{\rho}(t,z))\,ds,
		\]
		which shows that $f$ is a mild solution in the sense of Definition~\ref{def:mild-solution}.
		
		\subsection{Existence of weak entropy solutions}
		
		In the previous section, we obtained the existence of mild solutions satisfying the energy inequality \eqref{eq:energy-inequality} under the smoothness assumption \eqref{ass:flow} on the force $\sfF$. Here, we relax the assumption to external potentials $V$ and interaction potentials $K$ satisfying \textbf{(HV)} and \textbf{(HK)} respectively at the cost of a relaxed notion of solution. The main result of this section is thus the proof of Theorem~\ref{thm_kext}.
		
		\begin{proof}[Proof of Theorem~\ref{thm_kext}]
		We begin by considering a sequence of mollified potentials $V^\delta$ and $K^\delta$ satisfying \eqref{ass:flow} with $\nabla V^\delta \to \nabla V$ and $\nabla K^\delta\to \nabla K$ such that Theorem~\ref{thm:mild} applies, i.e., for each $\delta>0$ and a fixed $f_0\in\frkX^\gamma$ with compact support and finite energy $\calE[f_0]<+\infty$, there exists a mild solution $f^\delta\in L^\infty([0,T];\frkX^\gamma)$ to \eqref{eq:extend-exist} in the sense of Definition~\ref{def:mild-solution}. Hence, $f^\delta$ is a weak entropy solution in the sense of Definition~\ref{def_weak} due to the energy inequality deduced in Lemma~\ref{lem:energy-inequality}. In particular, 
		\[
		\e\partial_t f^\delta + v\cdot \nabla_x f^\delta = g_0^\delta + \nabla_v\cdot g_1^\delta\qquad\text{in the sense of distributions},
	\]
	with
	\[
		g_0^\delta = \frac{1}{\e} \bigl(M_\gamma[\rho^\delta] - f^\delta\bigr),\quad 
		g_1^\delta = f^\delta\,\sfF^\delta[\rho^\delta],
	\]
	where $\rho^\delta = \rho_{f^\delta}$ and $\sfF^\delta[\rho] = \nabla V^\delta + \nabla K^\delta\ast \rho$.
	
	From inequalites \eqref{eq:energy-inequality}, \eqref{eq:density-inequality}, we deduce the uniform estimate
	\[
		\sup_{\delta>0}\left\{\|\rho^\delta\|_{L^\infty([0,T];\frkR^\gamma)} +\sup_{t\in[0,T]}\left(\calE[f^\delta(t,\cdot)] +\intr |x|^2\rho^\delta(t,x)\,dx\right)\right\} <+\infty.
	\]
	Following the arguments made in Section~\ref{sec_uni}, we further obtain
	\[
		\sup_{\delta>0}\Bigl\{\|g_0^\delta\|_{L^{1+2/n}([0,T]\times\bbR^d\times\bbR^d)} + \|g_1^\delta\|_{L^{1+2/n}([0,T]\times\bbR^d\times\bbR^d)}\Bigr\} <+\infty.
	\]
	Together with a standard velocity averaging lemma (cf.~\cite[Theorem~3.1]{Ja09}), these uniform estimates allow us to conclude the existence of some $f\in L^{1+2/n}([0,T]\times\bbR^d\times\bbR^d)$ and a (not relabelled) subsequence $(f^\delta)_{\delta>0}\subset L^\infty([0,T];\frkX^\gamma)$ such that
	\begin{gather}\label{eq:exist_convergence}
		f^\delta \rightharpoonup f\;\;\text{weakly in $ L^{1+2/n}([0,T]\times\bbR^d\times\bbR^d)$}, \nonumber\\
		\rho^\delta \rightharpoonup \rho_f\;\;\text{weakly in $L^\gamma([0,T]\times\bbR^d)$},\qquad \rho^\delta \to \rho_f\;\;\text{in $L^1([0,T]\times\bbR^d)$},\\
		 \rho^\delta(t,x) \to \rho_f(t,x)\;\;\text{for a.e.\ $(t,x)\in [0,T]\times \bbR^d$}.\nonumber
	\end{gather}
	
	To obtain the energy inequality in the limit, we will need a further equicontinuity result. To do so, we first notice that for every $\varphi\in W^{2,\infty}(\bbR^d\times\bbR^d)$,
	\begin{align*}
		\sup_{\delta>0} \sup_{t\in[0,T]}|\langle v\cdot \nabla_x\varphi, f^\delta(t,\cdot)\rangle| &\le \|\varphi\|_{W^{2,\infty}}\sup_{\delta>0} \sup_{t\in[0,T]}\||v|^2 f^\delta(t,\cdot)\|_{L^1}^{1/2} \eqqcolon m_1 \|\varphi\|_{W^{2,\infty}} <+\infty,\\
		\sup_{\delta>0} \sup_{t\in[0,T]}|\langle \nabla_v\varphi, g_1^\delta(t,\cdot)\rangle| &= \sup_{\delta>0}\sup_{t\in[0,T]}|\langle \text{div}_x(\nabla_v \varphi), (V^\delta + K^\delta\ast \rho^\delta(t,\cdot))f^\delta(t,\cdot)\rangle| \\
		&\hspace{-6em}\le \|\varphi\|_{W^{2,\infty}}\sup_{\delta>0}\sup_{t\in[0,T]}\left\{\intr V^\delta(x) \rho^\delta(t,x)\,dx + \intr \rho^\delta(t,x) K^\delta\ast \rho^\delta(t,x)\,dx\right\} \\
		&\hspace{-6em} \eqqcolon m_2\|\varphi\|_{W^{2,\infty}} <+\infty.
	\end{align*}
	Hence, for any $\varphi\in W^{2,\infty}(\bbR^d\times\bbR^d)$, we have that
	\begin{align*}
		\langle\varphi,f^\delta(t,\cdot)\rangle - \langle\varphi,f^\delta(s,\cdot)\rangle &= \int_s^t \Bigl(\langle v\cdot \nabla_x\varphi, f^\delta(r,\cdot)\rangle + \langle \varphi, g_0^\delta(r,\cdot)\rangle - \langle \nabla_v\varphi,g_1^\delta(r,\cdot)\rangle \Bigr)\,dr \\
		&\le \|\varphi\|_{W^{2,\infty}}\Bigl(m_1 + \frac{1}{\e}\bigl(\|\rho_0\|_{L^1} + \|f_0\|_{L^1}\bigr) + m_2\Bigr)|t-s|.
	\end{align*}
	As a consequence, the sequence of family of curves $\{t\mapsto f^\delta(t,\cdot)\}_{\delta>0}\subset \calC([0,T];W^{-2,\infty})$ is uniformly equicontinuous. Together with the previous estimates, a generalized Arzel\'{a}--Ascoli theorem (cf. \cite{RoSa03}) may be applied to obtain the pointwise-in-time weak convergence
	\[
		f^\delta(t,\cdot)\calL^d \rightharpoonup f(t,\cdot)\calL^d\quad\text{narrowly in $\calP(\bbR^d\times\bbR^d)$ for every $t\in [0,T]$},
	\]
	along a (not relabelled) subsequence. With these convergences, we can now pass to the limit to obtain the energy inequality \eqref{kin_ineq} and the limit equation \eqref{main_eq}.
	
	\medskip
	\paragraph{\bf\em Energy inequality.} It is not difficult to see that the functional 
	\[
		\widehat{\calE}:\mathcal{P}(\bbR^d\times\bbR^d) \to [0,+\infty); \qquad \widehat{\calE}(\mu) = \begin{cases} \displaystyle
			\calE\left(\frac{d\mu}{d\calL^{2d}}\right) &\text{for $\mu \ll \calL^{2d}$,} \\
			+\infty &\text{otherwise},
		\end{cases}
	\]
	 is narrowly lower semicontinuous, and hence,
	\[
		\widehat{\calE}[f(t,\cdot)] \le \liminf_{\delta \to 0} \widehat{\calE}[f^\delta(t,\cdot)] \qquad\text{for every $t\in[0,T]$}.
	\]
	Similarly, the $L^{1+2/n}$-weak lower semicontinuity of the functional
	\[
		L^{1+2/n}((0,T)\times \bbR^d\times\bbR^d)\ni f \mapsto \iint_A\intr H[f](s,x,v)\,dv\,dx\,ds,
	\]
	for any Borel set $A\subset(0,t)\times \bbR^d$, $t\in(0,T)$, permits the liminf inequality
	\[
		\iint_A \intr H[f](s,x,v)\,dv\,dx\,ds \le \liminf_{\delta\to 0} \iint_A \intr H[f^\delta](s,x,v)\,dv\,dx\,ds.
	\]
	
	Now let $R>0$ be arbitrary. Since $\rho^\delta(t,x) \to \rho(t,x)$ for almost every $(t,x)\in [0,T]\times \overline{B_R}$, we also have that $(\rho^\delta)^\gamma(t,x) \to \rho^\gamma(t,x)$ for almost every $(t,x)\in [0,T]\times \overline{B_R}$. By Egorov's and Lusin's theorem, we find for any $\sigma>0$ a compact set $A^\sigma\subset [0,T]\times \overline{B_R}$ with $|A^\sigma|<\sigma$ such that $(\rho^\delta)^\gamma \to \rho^\gamma$ uniformly on $A^\sigma$. Consequently, have that
	\begin{align*}
		\iint_{A^\sigma}\intr H[M_\gamma[\rho^\delta]](s,x,v)\,dv\,dx\,dt &= \left(1+\frac{d}{2}\right)\iint_{A^\sigma} (\rho^\delta)^\gamma(s,x)\,dx\,ds \\
		&\hspace{-10em}\longrightarrow\; \left(1+\frac{d}{2}\right)\iint_{A^\sigma} \rho^\gamma(s,x)\,dx\,dt = \iint_{A^\sigma}\intr H[M_\gamma[\rho]](s,x,v)\,dv\,dx\,ds.
	\end{align*}
	Together with the previous convergence, we then find that
	\[
		\iint_{A^\sigma}\intr \bigl(H[f] - H[M_\gamma[\rho]] \bigr)\,dv\,dx\,ds \le \liminf_{\delta\to 0}\iint_{A^\sigma}\intr \bigl(H[f^\delta] - H[M_\gamma[\rho^\delta]] \bigr)\,dv\,dx\,ds.
	\]
	
	Moreover, due to the minimization principle (cf.\ Lemma~\ref{lem:minimization}), we also have that that
	\[
		\calE[f^\delta(t,\cdot)] + \frac{1}{\e^2}\iint_{A^\sigma} \intr\bigl(H[f^\delta] - H[M_\gamma[\rho^\delta]] \bigr)\,dv\,dx\,ds \le \calE[f_0], 
	\]
	for any $R>0$. Taking the liminf for $\delta\to 0$ then gives
	\[
		\calE[f(t,\cdot)] + \frac{1}{\e^2}\iint_{A^\sigma} \intr \bigl(H[f] - H[M_\gamma[\rho]] \bigr)\,dz\,ds   \le \calE[f_0].
	\]
	Using the minimization principle again and the monotone convergence theorem, we can first send $\sigma\to 0$ and the $R\to +\infty$ to recover the energy inequality \eqref{kin_ineq} for $f$.
	
	\medskip
	\paragraph{\bf\em Limit equation.} Recall that $f^\delta$ satisfies
			\begin{align*}
				- \intrr \varphi(0,\cdot)\,f_0\,dz - \int_0^T \!\!\! \intrr \bigl(\pa_t \varphi + v \cdot \nabla_x \varphi &- \sfF[\rho^\delta] \cdot \nabla_v \varphi \bigr) f^\delta dz\,dt \cr
				& = \int_0^T\!\!\! \intrr \varphi\lt(M_\gamma[\rho^\delta] - f^\delta\rt) dz\,dt,
			\end{align*}
	for every $\varphi \in \mc^\infty_c([0,T)\times \bbR^d \times \bbR^d)$. It is clear to see that the convergences in \eqref{eq:exist_convergence} allow us to pass to the limit in the linear terms. 
	
	As for the local equilibrium term, we notice that since $(\rho^\delta)_{\delta>0}\subset L^1\cap L^\gamma([0,T]\times\bbR^d)$ is bounded, we have from Lemma~\ref{lem:nemytskii} that $(M_\gamma[\rho^\delta])_{\delta>0}\subset L^{1+2/n}([0,T]\times\bbR^d\times\bbR^d)$ is bounded. Hence, we find a (not relabelled) subsequence and some $M_\gamma^\#\in L^{1+2/n}([0,T]\times\bbR^d\times\bbR^d)$ such that $M_\gamma[\rho^\delta] \rightharpoonup M_\gamma^\#$ weakly in $L^{1+2/n}([0,T]\times\bbR^d\times\bbR^d)$. Using the fact that $\rho^\delta\to \rho_f$ pointwise almost everywhere in $(0,T)\times\bbR^d$, which implies the pointwise almost everywhere convergence of $M_\gamma[\rho^\delta]\to M_\gamma[\rho_f]$, we then conclude that $M_\gamma^\# = M_\gamma[\rho_f]$.
	
	 We are left to prove the convergence of the interaction term, i.e.,
	\[
		\int_0^T\!\!\!\intrr \nabla K^\delta\ast\rho^\delta\cdot \nabla_v\varphi\,f^\delta\,dz\,dt \;\;\longrightarrow\;\; \int_0^T\!\!\!\intrr \nabla K\ast\rho_f\cdot \nabla_v\varphi\,f\,dz\,dt.
	\]
	To this end, we set 
	\[
		g_i^\delta(x)\coloneqq \intr \partial_{v_i}\varphi(x,v) f^\delta(x,v)\,dv,\qquad i=1,\ldots,d.
	\]
	Note that since $f^\delta(t,\cdot) \rightharpoonup f(t,\cdot)$ weakly in $L^1(\bbR^d\times\bbR^d)$ for every $t\in[0,T]$, we have that $g_i^\delta(t,\cdot) \rightharpoonup g_i(t,\cdot)$ weakly in $L^1(\bbR^d)$ for every $t\in[0,T]$. Along with the weak convergence $\rho^\delta(t,\cdot)\rightharpoonup \rho_f(t,\cdot)$ in $L^1(\bbR^d)$ for almost every $t\in[0,T]$, we find that
	\[
		g_i^\delta(t,\cdot) \otimes \rho^\delta(t,\cdot) \rightharpoonup g_i(t,\cdot)\otimes \rho_f(t,\cdot)\quad\text{weakly in $L^1(\bbR^d\times\bbR^d)$ for almost every $t\in[0,T]$},
	\]
	where $\phi\otimes \psi(x,y) = \phi(x)\psi(y)$ for measurable functions $\phi,\psi$.
	
	Now let $\varphi_i^\ell\in \calC_c(\bbR^d)$ be a sequence such that $\varphi_i^\ell\to \partial_i K$ in $(L^q + L^r)(\bbR^d)$ as $\ell\to \infty$, $i=1,\ldots,d$, which exists due to density arguments. Then,
	\begin{align*}
		&\intrr g_i^\delta(t,x) \partial_i K^\delta(x-y)\rho^\delta(t,y) \,dx\,dy - \intrr g_i(t,x) \partial_i K(x-y)\rho_f(t,y) \,dx\,dy \\
		&= \intrr \bigl[\partial_i K^\delta(x-y)-\varphi_i^\ell(x-y)\bigr]\bigl(g_i^\delta(t,x)\rho^\delta(t,y) + g_i(t,x)\rho_f(t,y)\bigr)  \,dx\,dy \\
		&\qquad+ \intrr \varphi_i^\ell(x-y) \bigl[g_i^\delta(t,x)\rho^\delta(t,y) - g_i(t,x)\rho_f(t,y)\bigr] \,dx\,dy \\
		&\eqqcolon I_1(t) + I_2(t)\qquad\text{for almost every $t\in[0,T]$}.
	\end{align*}
	Using the Young convolution inequality as in Lemma~\ref{lem:uniform-g}, we obtain the estimate
	\[
		|I_1(t)| \le C\|\partial_i K^\delta -\varphi_i^\ell\|_{L^q+L^r},
	\]
	for some constant $C>0$, independent of $\delta>0$.
	As for the second term, we use the fact that $(x,y)\mapsto\varphi_i^\ell(x-y)$ is $L^\infty$ to deduce $I_2(t)\to 0$ for almost every $t\in[0,T]$. Therefore, by, first, passing $\delta\to 0$ and then $\ell\to \infty$, we obtain $I_1(t) + I_2(t) \to 0$ for almost every $t\in[0,T]$. Finally, we apply the dominated convergence theorem to conclude that
	\[
		\int_0^T I_1(t) + I_2(t)\,dt \longrightarrow 0,
	\]
	and therewith proving that $f$ is a weak entropy solution to \eqref{main_kin} for initial data $f_0$ with compact support and finite energy $\calE[f_0]<+\infty$.
	
	To remove the compact support assumption on any initial data $f_0\in L_2^1\cap L^{1+2/n}(\bbR^d\times\bbR^d)$ with finite energy $\calE[f_0]<+\infty$, we can consider the restriction of $f_0$ on the ball $B_R(0)\subset\bbR^d\times\bbR^d$, $R>0$ and conclude using similar arguments as above when sending $R\to\infty.$
	\end{proof}
	
	%
	%
	%
	%
	%
	%
	
	\section*{Acknowledgement}
	The research of YPC was supported by NRF grant no.\ 2022R1A2C1002820 and RS-2024-00406821. J. Kim is grateful for support by the Open KIAS Center at Korea Institute for Advanced Study.
	
	%
	%
	%
	%
	%
	%
 	
 	\appendix
 	\section{Proof of Lemma~\ref{lem:averaging}}\label{sec:velocity}

			Let $f,g_0,g_1\in L^p((0,T);L_{loc}^p(\bbR^d\times\bbR^d))$. Consider a compactly supported smooth function $\widetilde{\varphi}\in \calC_c^\infty(\bbR^d\times\bbR^d)$ and set
			\[
				\widetilde{f}(t,x,v) \coloneqq f(t,x,v)\,\widetilde{\varphi}(x,v).
			\]
			Then, the function $\tilde{f}$ satisfies
			\[
			\e\pa_t\widetilde{f} + v\cdot\nabla_x\widetilde{f} = \widetilde{g}_0+\nabla_v\cdot\widetilde{g}_1,
			\]
			where 
			\[
			\widetilde{g}_0 \coloneqq g_0 \widetilde{\varphi} + f(v\cdot\nabla_x\widetilde{\varphi})-g_1\cdot\nabla_v\widetilde{\varphi}\qquad \mbox{and} \qquad \widetilde{g}_1\coloneqq g_1\widetilde{\varphi}.
			\]
			Thus, we obtain $\tilde{f},\tilde{g}_0,\tilde{g}_1\in L^p((0,T);L^p(\bbR^d\times\bbR^d))$, implying that once we prove Lemma \ref{lem:averaging} for the $L^p$ case, we conclude that for other compactly supported smooth functions $\phi,\psi\in \calC^\infty_c(\bbR^d)$,
			\begin{align*}
			\intr f(t,x,v)\,\widetilde{\varphi}(x,v)\phi(x)\psi(v)\,dv = \intr\widetilde{f}(t,x,v)\phi(x)\psi(v)\,dv \in L^{p}\bigl((0,T);B^{(p-1)/2p}_{p,\infty}(\bbR^d)\bigr).
			\end{align*}
			Now, by choosing $\widetilde{\varphi}$ as a smooth cutoff function such that
			\[
				\widetilde{\varphi}(x,v)=
			1 \quad \mbox{for $(x,v)\in (\textup{supp}\,\phi)\times (\textup{supp}\,\psi)$},
			\]
			we obtain $\widetilde{\varphi}(x,v)\phi(x)\psi(v)=\phi(x)\psi(v)$ for $(x,v)\in ({\supp}\phi)\times({\supp} \psi)$, thus implying
			\[
				(\rho_{\psi}\phi)(t,x)=\intrr f(t,x,v)\phi(x)\psi(v)\,dv\in L^{p}\bigl((0,T);B^{(p-1)/2p}_{p,\infty}(\bbR^d)\bigr).
			\]
			Thus, it remains to prove the desired assertion for $f,g_0,g_1\in L^p((0,T)\times\bbR^d\times\bbR^d)$.
			
			For any $\lambda>0$, we add $\lambda f$ to both sides to deduce
			\[
				\lambda f +\e\pa_t f + v\cdot\nabla_x f = g_0+\nabla_v\cdot g_1 + \lambda f.
			\]
			From this, we obtain
			\[
				\rho_\psi(t,x) =\intr f(t,x,v)\,\psi(v)\,dv=T_\lambda (g_0+\nabla_v \cdot g_1 +\lambda f),
			\]
			where the operator $T_\lambda$ is defined as
			\[
				(T_\lambda h)(t,x) \coloneqq \int_0^\infty\!\!\! \intr h(t-\e \tau,x-\tau v,v)e^{-\lambda \tau}\psi(v)\,dv\,d\tau.
			\]
			Then, we use \cite[Proposition 3.1]{JV04} with the parameters $s=0$, $p_1=p_2=p$ to deduce
			\[
				\lambda^{\frac{1}{p}}\|T_\lambda(h)\|_{L^p_t\dot{W}^{1-\frac{1}{p},p}_x}\le C_\psi\|h\|_{L^p(\R\times\bbR^d\times\bbR^d)}.
			\]
			Using the same proposition with the parameters $s=-1$, $p_1=p_2=p$, we have
			\[
				\lambda^{1+\frac{1}{p}}\|T_{\lambda}(h)\|_{L^p_t\dot{W}^{-\frac{1}{p},p}_x} + \lambda^{\frac{1}{p}}\|T_\lambda(h)\|_{L^p_t\dot{W}^{1-\frac{1}{p},p}_x}\le C_\psi\|h\|_{L^p(\R_t\times\R_x^d;W^{-1,p}(\R_v^d))}.
			\]
            Here, $C_\psi = C \|\psi\|_{L^1 \cap W^{1,\infty}}$ for some universal constant $C>0$.
			Note that we cannot obtain an additional regularity in the time variable due to the coefficient $\e$ in front of $\pa_t f$ in \eqref{eq:vel-avg}.
			Thus, once we split $\rho_\psi$ as
			\[
				\rho_\psi = \lambda T_{\lambda} f + T_{\lambda}(g_0+\nabla_v\cdot g_1) = \rho^1_\psi +\rho^2_\psi,
			\]
			we have
			\[
				\lambda^{-1+\frac{1}{p}}\|\rho^1_\psi\|_{L^p_t\dot{W}^{1-\frac{1}{p},p}_x}\le C_\psi\|f\|_{L^p},
			\]
			and
			\[
				\lambda^{1+\frac{1}{p}}\|\rho^2_\psi\|_{L^p_t\dot{W}^{-\frac{1}{p},p}_x} + \lambda^{\frac{1}{p}}\|\rho^2_\psi\|_{L^p_t\dot{W}^{1-\frac{1}{p},p}_x}\le C_\psi\|g_0+\nabla_v\cdot g_1\|_{L^p}.
			\]
			Therefore, after localizing $\rho_\psi$ by multiplying $\phi$, we can deduce that $\rho_\psi\phi=\rho_\psi^1\phi+\rho_\psi^2\phi$,
			satisfying
			\[
				\|\rho_\psi^1\phi\|_{L^p_t \dot W^{1-\frac{1}{p},p}_x}\le C_\psi C_\phi\lambda^{1-\frac{1}{p}} \quad \mbox{and} \quad \|\rho_\psi^2\phi\|_{L^p_t \dot W^{-\frac{1}{p},p}_x}\le C_\psi C_\phi\lambda^{-1-\frac{1}{p}},
			\]
where $C_\phi = C\|\phi\|_{ W^{1,\infty}}$ for some universal constant $C>0$. To establish that $\rho_\psi\phi$ belongs to the desired Besov space, we apply the $K$-method for real interpolation \cite{BL76,JV04}. Specifically, we define
		\[
			K(h)=\inf_{a_1+a_2=\rho_\psi\phi}\left\{\|a_1\|_{L^p_t \dot W^{1-\frac{1}{p},p}_x}+h\|a_2\|_{L^p_t \dot W^{-\frac{1}{p},p}_x}\right\}.
		\]
For each $h>0$, we set $\lambda = h^{1/2}$. Choosing $a_1=\rho_\psi^1\phi$ and $a_2=\rho_\psi^2\phi$, we obtain
		\[
			K(t)\le C_\psi C_\phi\lambda^{1-\frac{1}{p}}=C_\psi C_\phi h^{\frac{p-1}{2p}},\quad h>0.
		\]
		This implies
		\[
			\rho_\psi \phi\in \left(L^p_t \dot W^{1-\frac{1}{p},p}_x,L^p_t \dot W^{-\frac{1}{p},p}_x\right)_{s,\infty}=L^p_t \dot B^{s}_{p,\infty},
		\]
			where
			\[s=\left(1-\frac{1}{p}\right)\left(\frac{p+1}{2p}\right)+\left(-\frac{1}{p}\right)\left(\frac{p-1}{2p}\right)=\frac{p-1}{2p}.\]
			In conclusion, for $p \in (1, 1 + \frac 2n]$, we have
		\[
		\rho_{\psi}\phi\in L^p\bigl((0,T);B^{(p-1)/2p}_{p,\infty}(\bbR^d)\bigr) \qquad\text{for any $\phi,\psi\in \calC^\infty_c(\bbR^d)$}.\qedhere
		\]

	%
	%
	%
	%
	%
	%

\end{document}